\newtheorem{theorem}{Theorem}[section]
\newtheorem{lemma}[theorem]{Lemma}
\newtheorem{corollary}[theorem]{Corollary}
\newtheorem{proposition}[theorem]{Proposition}
\theoremstyle{remark}
\numberwithin{equation}{section}
\begin{document}
\date{}
\title{\large \textbf{Dynamics of a Reaction-Diffusion Benthic-Drift Model with Strong Allee Effect Growth}\footnote{Partially supported by  US-NSF grants  DMS-1715651 and DMS-1853598.} }
\author
{ Yan Wang\textsuperscript{a}\ \ Junping Shi\textsuperscript{b}\footnote{Corresponding author. Email: \texttt{jxshix@wm.edu}} \\
\\
{\small \textsuperscript{a} School of Mathematical Sciences, Beijing Normal University, \hfill{\ }}\\
\ \ {\small Beijing,  100875, P.R. China\hfill{\ }}\\
{\small \textsuperscript{b} Department of Mathematics, College of William and Mary, \hfill{\ }}\\
\ \ {\small Williamsburg, Virginia 23187-8795, USA \hfill {\ }}\\
}
\maketitle


\newcommand{\Om}{\Omega}
\newcommand{\ep}{\varepsilon}
\newcommand{\la}{\lambda}
\newcommand{\R}{{\mathbb R}}
\newcommand{\al}{\alpha}
\newcommand{\io}{\int_{\Om}}
\newcommand{\ds}{\displaystyle}
\newcommand{\noi}{\noindent}
\newcommand{\N}{\mathbb{N}}

\begin{abstract}
\noindent{The dynamics of a reaction-diffusion-advection benthic-drift population model that links changes in the flow regime and habitat availability with population dynamics is studied. In the model, the stream is divided into drift zone and benthic zone, and the population is divided into two interacting compartments, individuals residing in the benthic zone and individuals dispersing in the drift zone. The benthic population growth is assumed to be of strong Allee effect type. The influence of flow speed and individual transfer rates between zones on the population persistence and extinction is considered, and the criteria of population persistence or extinction are formulated and proved.
}
\end{abstract}
\vspace{0.1in}

\noindent{\textbf{Keywords}:  Reaction-diffusion-advection; benthic-drift; strong Allee effect; \\ persistence;  extinction}

\vspace{0.1in}

\noindent{\textbf{MSC (2010)}: 92D25, 35K57, 35K58, 92D40}

\section{Introduction}
Streams and rivers are characterized by a variety of physical, chemical and geomorphological features such as unidirectional flow, pools and riffles, bends and waterfalls, floodplains, lateral inflow and network structure and many more. These complex structures provide a wide range of qualitatively different habitat for aquatic species and organisms such as zooplankton, invertebrates, aquatic plant and fish. In \cite{muller1954investigations}, M{\"u}ller proposed an important issue in stream ecology, the ``drift paradox", which asks how stream dwelling organisms can persist in a river/stream environment when continuously subjected to a unidirectional water flow. Mathematical models, such as reaction-diffusion-advection equations and integro-differential equations have been established to study the population dynamic in advective environment. For species following  logistic type growth, a ``critical flow speed" has been identified, below which can ensure the persistence of the stream population \cite{Jin2011,LLL2015JBD,LLL2016SIAM,LL2014JMB,Lutscher2006,Lutscher2005,Mckenzie2012,speirs2001}. On the other hand, when the species following Allee effect type growth, population persistence for all initial conditions becomes not possible as the extinction state is always a stable state, and more delicate conditions are needed to ensure the population persistence \cite{SS2006JMB,ws2019,ws2018}. The solution of stream population persistence/extinction  not only leads to a better understanding of population dynamics in a stream environment, but also provides strategies for how to keep a native species persistent.

Stream hydraulic characteristics is another important factor in the ecology of stream populations. Of great importance is the presence of storage zones (zones of zero or near-zero flow) in stream channels. These zones are refuges for many organisms not adapted to high water velocity. And for some aquatic species, the individuals spend a proportion of their time immobile and a proportion of their time in an environment with a unidirectional current and do not reproduce there. Following \cite{bencala1983simulation,deangelis1995modelling}, the river can be partitioned into two zones, drift zone and benthic zone, and the population is also split into two interacting compartments: individuals residing in the benthic zone and the ones dispersing in the drift zone. Assuming that longitudinal movement occurs only in the drift zone,  a system of coupled reaction-diffusion-advection equation of drift population and equation of benthic population can be used to model the dynamic evolution of  aquatic species that reproduce on the bottom of the river and release their larval stages into the water column, such as sedentary water plant, oyster and coral  \cite{Lutscher2006,Pachepsky200561}.

Assuming logistic growth for the benthic population, the population spreading, invasion and the propagation speed were studied in \cite{Lutscher2006,Pachepsky200561}; the population persistence criteria on a finite length river based on the net reproductive rate was investigated in \cite{huang2016};  and the population dynamics of two competitive species in the river was studied in \cite{jin2018dynamics}. All these work assume logistic growth for the benthic population so the population persistence/extinction or spreading can be completely determined by a sharp threshold which is often expressed by a basic reproduction number or a critical advection rate. Benthic-drift models of algae and nutrient population have also been considered \cite{Grover2009,hsu2011,hsu2013,Wang2015}. Other studies also consider the effect of river network structure \cite{Jin2019,Ramirez2012,Sarhad2014,vasilyeva2019population}, the effect of advection on competition \cite{lou2019global,zz2016,z2016cvpde,zhoup2018,zz2018} and meandering structure \cite{Jin2017}.

In this paper, we investigate how interactions between the benthic zone and the drift zone affect the population dynamics of a benthic-drift model, when the species follows a strong Allee effect population growth in the benthic zone. Our main findings on the dynamics of benthic-drift model with strong Allee effect type growth in the benthic population are
\begin{enumerate}
  \item If the benthic population release rate is large, then for all the boundary conditions, extinction will always occur regardless of the initial conditions, the diffusive and advective movement and the transfer rate from the drift zone to the benthic zone;
  \item If the benthic population release rate is small (but not zero),  then for all the boundary conditions, the population persists for large initial conditions and becomes extinct for small initial conditions. Such a bistability in the system exists also independent of the diffusive and advective movement and the transfer rate from the drift zone to the benthic zone;
  \item If the benthic population release rate is in the intermediate range, the persistence or extinction depends on the diffusive and advective movement. It is shown that for the closed environment, the population can persist under small advection rate and large initial condition.
\end{enumerate}
These results are rigorously proved by using the theory of dynamical systems, partial differential equations, upper-lower solution methods, and numerical simulations are also included to verify or demonstrate theoretical results. Compared with the single compartment reaction-diffusion-advection equation with a strong Allee effect growth rate \cite{ws2018}, in which the advection rate $q$ plays an important role in the persistence/extinction dynamics, the benthic-drift model dynamics with strong Allee effect relies more critically on the strength of interacting between zones.

The dynamic behavior of the single compartment reaction-diffusion-advection equation modeling a stream population with a strong Allee effect growth rate was investigated in \cite{ws2018}. Compared to the well-studied logistic growth rate, the extinction state in the strong Allee effect case is always locally stable. It is shown that when both the diffusion coefficient and the advection rate are small, there exist multiple positive steady state solutions hence the dynamics is bistable so that different initial conditions lead to different asymptotic behavior. On the other hand, when the advection rate is large, the population becomes extinct regardless of initial condition under most boundary conditions. Corresponding dynamic behavior for weak Allee effect growth rate has been considered in \cite{ws2019}; and the role of protection zone on species persistence or spreading for species  with strong Allee effect growth has \cite{Cui2014JDE,du2019JDE}.

The benthic-drift model has the feature of a coupled partial differential equation (PDE) for the drift population and an ``ordinary differential equation" (ODE) for the benthic population. Note that the benthic population equation is not really one ODE but an ODE at each point of the spatial domain, or a reaction-diffusion equation with zero diffusion coefficient. Such degeneracy causes a noncompactness of the solution orbits in the function space, which brings an extra difficulty in analyzing the dynamics. Such PDE-ODE coupled systems have been also studied in the case of population that has a quiescent phase \cite{Zhang2007}, or some species are immobile \cite{marciniak2017,Wang2015b}. 

In Section 2, the benthic-drift model of stream population is established, and all model parameters and growth rate conditions are set up in a general setting. Some preliminary results are stated and proved in Section 3: the basic dynamics, global attractor, and linear stability problem. The main results on the population persistence and extinction are proved in Section 4, and some numerical simulations are shown in Section 5 to provide some more quantitative information of the dynamics. A few concluding remarks are in Section 6.

\section{Model}
Consider a population in which individuals live and reproduce in the storage zone, and occasionally enter the water column to drift until they settle on the benthos again. We assume that advective and diffusive transport occur only in the main flowing zone, not the storage zone. So we neglect the movement in the benthic zone. While in the drifting water, we consider the individual's movement as a combination of passive diffusion movement and advective movement which is from sensing and following the gradient of resource distribution (taxis) or a directional fluid/wind flow. Let $u(x,t)$ be the population density in the drift zone and let $v(x,t)$ be the population density in the benthic zone. And the river environment is modeled by a one-dimensional interval $[0,L]\subset \R$; the upstream endpoint is $x=0$, and the downstream endpoint is $x=L$, where $L$ is the length of the river. A mathematical model that describes the dynamics of the population in a river is given by \cite{huang2016,Lutscher2006}:
\begin{equation}\label{1}
  \begin{cases}
  u_t=d u_{xx}-q u_x+\ds\frac{A_b(x)}{A_d(x)}\mu v-\sigma u-m_1 u, & \qquad 0<x<L, \; t>0,\\
  v_t=vg(x,v)-m_2 v-\mu v+\ds\frac{A_d(x)}{A_b(x)}\sigma u, & \qquad 0\leq x\leq L, \; t>0,\\
  du_{x}(0,t)-qu(0,t)=b_u qu(0,t), & \qquad t>0,\\
  du_{x}(L,t)-qu(L,t)=-b_d qu(L,t), & \qquad t>0,\\
  u(x,0)=u_0(x)\ge 0, \; v(x,0)=v_0(x)\ge 0, & \qquad x\in (0,L),
  \end{cases}
\end{equation}
 where $d$ and $q$ are the diffusion rate and advection rate of the population in the drifting zone, respectively; $A_b(x)$ and $A_d(x)$ are the cross-sectional areas of the benthic zone and drift zone, respectively; $\sigma$ is the the transfer rate of the drift population to the benthic one and $\mu$ is the transfer rate of the benthic population to the drifting one; $m_1$ and $m_2$ are the mortality rates of the drift  and benthic population, respectively. Throughout the paper, we assume that the functions $A_b(x)$ and $A_d(x)$ and parameters satisfy  the following conditions:
 \begin{enumerate}
  \item[(A1)] $A_b(x), A_d(x)\in C[0,L]$, $A_b(x)>0$ and $A_d(x)>0$ on $x\in[0,L]$.
  \item[(A2)] $d>0$, $q\geq 0$, $\mu>0$, $\sigma>0$, $m_1\geq 0$ and $m_2\geq 0$.
\end{enumerate}
The boundary conditions for the drift population in \eqref{1} is given in a flux form following \cite{LL2014JMB,ws2018} (see also \cite{huang2016} for slightly different setting). Here the parameters $b_u\geq 0$ and $b_d\geq 0$ determine the magnitude of population loss at the upstream end $x=0$ and the downstream end $x=L$, respectively. At the boundary ends $x=0$ and $x=L$, if $b_u=0$ and $b_d=0$, that is the no-flux (NF) boundary condition $du_x(x,t)-q u(x,t)=0$, for instance, can be effectively used to study the sinking, self-shading phytoplankton model (see, e.g., \cite{Hsu2010, huisman2002sinking}); $b_d=1$ gives the free-flow (FF) boundary condition $u_x(x,t)=0$, referred as the Danckwerts condition, can be applied to the situation like stream to lake (see \cite{Vasilyeva2010}); and when $b_d$ becomes sufficiently large, i.e. $b_d\rightarrow\infty$, we have the hostile (H) boundary condition $u(x,t)=0$, which can be used in the scenario of stream to ocean (see \cite{speirs2001}).

The growth rate per capita $g(x,v)$ satisfy the following general conditions as in \cite{ws2018} (see also \cite{CC2003,SS2006JMB}):
 \begin{enumerate}
  \item[(g1)] For any $v\geq0$, $g(\cdot,v)\in C[0,L]$, and for any $x\in [0,L]$, $g(x,\cdot)\in C^1[0,L]$.
  \item[(g2)] For any $x\in[0,L]$, there exists $r(x)\geq0$, where $0<r(x)<M$ and $M>0$ is a constant, such that $g(x,r(x))=0$, and $g(x,v)<0$ for $v>r(x)$.
  \item[(g3)] For any $x\in[0,L]$, there exists $s(x)\in [0,r(x)]$ such that $g(x,\cdot)$ is increasing in $[0,s(x)]$ and non-increasing in $[s(x),\infty]$; and there also exists $N>0$ such that $g(x,s(x))\leq N$.
\end{enumerate}
Here $r(x)$ is the local carrying capacity at $x$ which has a uniform upper bound $M$; $v=s(x)$ is where $g(x,\cdot)$ reaches the maximum value, and the number $N$ is a uniform bound for $g(x,v)$ at all $(x,v)$.  Moreover we assume that $g(x,v)$ takes one of the following three forms: (see \cite{SS2006JMB,ws2018})
\begin{enumerate}
  \item[(g4a)] Logistic: $s(x)=0$, $g(x,0)>0$, and $g(x,\cdot)$ is decreasing in $[0,\infty)$;
  \item[(g4b)] Weak Allee effect: $s(x)>0$, $g(x,0)>0$, $g(x,\cdot)$ is increasing in $[0,s(x)]$, and is non-increasing in $[s(x),\infty)$; or
  \item[(g4c)] Strong Allee effect: $s(x)>0$, $g(x,0)<0$, $g(x,s(x))>0$, $g(x,\cdot)$ is increasing in $[0,s(x)]$, and is non-increasing in $[s(x),\infty)$. Hence there exists a unique $h(x)\in (0,s(x))$ such that $g(x,h(x))=0$ for all $0<x<L$.
\end{enumerate}

\begin{figure}[htb]
\centering
  \includegraphics[width=0.5\textwidth]{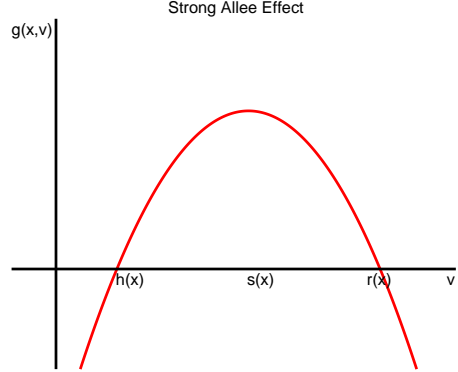}
  \caption{\label{fig1} {\small Growth rate per capita $g(x,v)$ of Strong Allee effect.}}
\end{figure}

For later applications, we also define
\begin{equation}\label{gmax}
    \begin{split}
        g_{max}&=\ds\max_{x\in[0,L]}g(x,s(x))=\ds\max_{x\in[0,L]}\max_{v\geq 0} g(x,v),\\
        g_{min}&=\ds\min_{x\in[0,L]}g(x,s(x))=\ds\min_{x\in[0,L]}\max_{v\geq 0} g(x,v).
    \end{split}
\end{equation}
The growth rate of the population is $f(x,v)=v g(x,v)$, and we also define
\begin{equation}\label{fv}
    \overline{f_v}=\ds\max_{x\in[0,L]}\max_{v\geq 0}f_v(x,v).
\end{equation}
One can observe that $g_{max}\leq \overline{f_v}$ as $\max_{v\geq 0} f_v(x,v)=\max_{v\geq 0}(g+vg_v)\geq g(x,s(x))+s(x)g_v(x,s(x))=g(x,s(x))=\max_{v\geq 0}g(x,v)$ for $x\in [0,L]$.

In the following we will study the dynamics of system \eqref{1} under the conditions (A1)-(A2), (g1)-(g3) and (g4c) (strong Allee effect growth). In particular, we are interested in the existence, multiplictity and stability of non-negative steady state solutions $(u(x),v(x))$ which satisfy the following steady state system:
\begin{equation}\label{1ss}
  \begin{cases}
  d u_{xx}-q u_x+\ds\frac{A_b(x)}{A_d(x)}\mu v-\sigma u-m_1 u=0, & \qquad 0<x<L,\\
  vg(x,v)-m_2 v-\mu v+\ds\frac{A_d(x)}{A_b(x)}\sigma u=0, & \qquad 0\leq x\leq L,\\
  du_{x}(0)-qu(0)=b_u qu(0),\\
  du_{x}(L)-qu(L)=-b_d qu(L).
  \end{cases}
\end{equation}

\section{Basic properties of solutions}
This section is devoted to establishing some basic properties of \eqref{1}.
\subsection{The well-posedness}
We first study the well-posedness of the initial-boundary-value problem \eqref{1}. Using the transform $u=e^{\alpha x}w, v=e^{\alpha x}z$ on the system \eqref{1}, where $\alpha=\ds\frac{q}{d}$, we obtain the following system of new variables $(w,z)$:
\begin{equation}\label{wz}
\begin{cases}
  w_t=d w_{xx}+q w_x+\ds\frac{A_b(x)}{A_d(x)}\mu z-\sigma w-m_1w, & \qquad 0<x<L, \; t>0,\\
  z_t=zg(x,e^{\alpha x}z)-m_2 z-\mu z+\ds\frac{A_d(x)}{A_b(x)}\sigma w, & \qquad 0\leq x\leq L, \; t>0,\\
  -dw_{x}(0,t)+b_u qw(0,t)=0, & \qquad t>0,\\
  dw_{x}(L,t)+b_d qw(L,t)=0, & \qquad t>0,\\
  w(x,0)=e^{-\alpha x}u_0(x):=w_0(x)\ge 0, & \qquad x\in (0,L),\\
  z(x,0)=e^{-\alpha x}v_0(x):=z_0(x)\ge 0, & \qquad x\in (0,L),
\end{cases}
\end{equation}
The boundary conditions of system \eqref{wz} are either no-flux ($b_u=b_d=0$), hostile ($b_u, b_d\rightarrow\infty$ ) or Robin ($b_u, b_d>0$) types. With $b_u\geq 0$ and $b_d\geq0$,  we have the following settings following similar ones in \cite{hsu2013,huang2016}. Let $X=C([0,L],\mathds{R})$ be the Banach space with the usual supremum norm $\| u\|_{\infty}=\ds\max_{x\in[0,L]}|u(x)|$ for $u\in X$. Then the set of non-negative functions forms a solid cone $X_+$ in the Banach space $X$. 
Suppose that $T_1(t)$ is the $C_0$ semi-group associated with the following linear initial value problem
\begin{equation}\label{po}
 \begin{cases}
 w_t=d w_{xx}+qw_x-m_1w, & \qquad 0<x<L, \; t>0,\\
 -dw_{x}(0,t)+b_u qw(0,t)=0, & \qquad t>0,\\
  dw_{x}(L,t)+b_d qw(L,t)=0, & \qquad t>0,\\
  w(x,0)=w_0(x)\ge 0, & \qquad x\in (0,L).
 \end{cases}
\end{equation}
From \cite[Chapter 7]{smith2008}, it follows that the solution of \eqref{po} is given by $w(x,t)=T_1(t)w_0$ and $T_1(t): X\rightarrow X$ is compact, strongly positive and analytic for any $t>0$. We also define
\begin{equation*}
 (T_2(t)\varphi)(x)=e^{-m_2t}\varphi,
\end{equation*}
for any $\varphi\in X$, $t\geq 0$. Then $T(t):=(T_1(t),T_2(t)): X\times X\rightarrow X\times X$, $t\geq 0$, defines a $C_0$ semigroup. Define the nonlinear operator $B=(B_1,B_2): X_+\times X_+\rightarrow X\times X$ by
\begin{equation}
 \begin{split}
  B_1(\phi)(x)&=\ds\frac{A_b(x)\mu}{A_d(x)}\phi_2-\sigma\phi_1,\\
  B_2(\phi)(x)&=\phi_2g(x,e^{\alpha x}\phi_2)+\ds\frac{A_d(x)\sigma}{A_b(x)}\phi_1-\mu\phi_2,
 \end{split}
\end{equation}
for $x\in[0,L]$ and $\phi=(\phi_1,\phi_2)\in X_+\times X_+$. Then system \eqref{wz} can be rewritten as the following integral equation
\begin{equation}
  U(t)=T(t)\phi+\int_0^t T(t-s)B(U(s))ds,
\end{equation}
where $U(t)=(w(t),z(t))$ and $\phi=(\phi_1,\phi_2)\in X_+\times X_+$. By \cite[Theorem 1 and Remark 1.1]{martin1990}, it follows that for any $(w_0,z_0)\in X_+\times X_+$, system \eqref{wz} has a unique non-negative mild solution $(w(x,t;w_0,z_0),z(x,t;w_0,z_0))$  with initial condition $(w_0,z_0)$. Moreover, $(w(x,t;w_0,z_0),z(x,t;w_0,z_0))$ is a classical solution of system \eqref{wz} for $t>0$. Then, we can have the local existence and positivity of solutions of system \eqref{wz} and  \eqref{1}.

\begin{lemma}
Suppose that  $A_b(x)$ and $A_d(x)$ and parameters satisfy (A1)-(A2), $g(x,u)$ satisfies (g1)-(g2), then system \eqref{1} has a unique solution for any initial value in $X_+\times X_+$ and the solutions to \eqref{1} remain non-negative on their interval of existence.
\end{lemma}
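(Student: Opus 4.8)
The plan is to reduce everything to the transformed system \eqref{wz}, for which all the functional-analytic scaffolding has already been put in place, and then pull the conclusions back to \eqref{1}. The map $(w,z)\mapsto(u,v)=(e^{\alpha x}w,e^{\alpha x}z)$ with $\alpha=q/d$ is a $C^\infty$, strictly positive, invertible multiplier on $[0,L]$; it carries (mild or classical) solutions of \eqref{wz} bijectively onto those of \eqref{1}, transforms the flux boundary conditions of \eqref{1} into those of \eqref{wz} exactly as indicated, maps the cone $X_+\times X_+$ onto itself, and preserves both regularity and non-negativity since $e^{\alpha x}>0$ on $[0,L]$. Hence it suffices to prove existence, uniqueness, and positivity for \eqref{wz} on its maximal interval of existence, and then relabel.

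For \eqref{wz} I would verify the hypotheses of the cited semigroup/invariance results \cite{martin1990} for the integral equation $U(t)=T(t)\phi+\int_0^tT(t-s)B(U(s))\,ds$. First, the nonlinearity $B=(B_1,B_2)$ is locally Lipschitz on bounded subsets of $X_+\times X_+$: $B_1$ is linear in $\phi$, while in $B_2$ the only nonlinear term is $\phi_2\,g(x,e^{\alpha x}\phi_2)$, and by (g1) the scalar map $z\mapsto z\,g(x,e^{\alpha x}z)$ is $C^1$, so it is Lipschitz on bounded $z$-sets uniformly in $x\in[0,L]$. Second, $B$ is quasi-positive with respect to the cone: if $\phi\in X_+\times X_+$ and $\phi_1(x_0)=0$ for some $x_0$, then $B_1(\phi)(x_0)=\frac{A_b(x_0)\mu}{A_d(x_0)}\phi_2(x_0)\ge 0$, and if $\phi_2(x_0)=0$ then $B_2(\phi)(x_0)=\frac{A_d(x_0)\sigma}{A_b(x_0)}\phi_1(x_0)\ge 0$, using (A1)--(A2). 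Combined with positivity of the solution semigroup $T(t)=(T_1(t),T_2(t))$ --- $T_1(t)$ being strongly positive by \cite[Chapter 7]{smith2008} and $T_2(t)=e^{-m_2t}\mathrm{Id}$ obviously positive --- the cited theorems of \cite{martin1990} yield, for each $(w_0,z_0)\in X_+\times X_+$, a unique non-negative mild solution $(w,z)$ on a maximal interval $[0,t_{\max})$.

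It remains to upgrade regularity and transfer back. Since $T_1(t)$ is analytic and $B$ maps $X_+\times X_+$ continuously into $X\times X$, a standard bootstrap on the Duhamel formula shows $w$ is a classical solution of the first equation of \eqref{wz} for $t>0$; the $z$-component then satisfies its (diffusionless) equation classically because $B_2(U(\cdot))$ inherits the regularity of $w$ and $z$ through the explicit variation-of-constants formula $z(t)=e^{-m_2t}z_0+\int_0^te^{-m_2(t-s)}B_2(U(s))\,ds$. Uniqueness on $[0,t_{\max})$ follows from the local Lipschitz property and Gronwall's inequality. Finally, $u=e^{\alpha x}w$, $v=e^{\alpha x}z$ gives the asserted unique non-negative solution of \eqref{1}, classical for $t>0$, on the same interval.

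The only genuinely delicate points, all routine once identified, are the quasi-positivity (subtangent) check at the boundary of the cone $X_+\times X_+$ done above, and the fact that the invariance and regularity arguments must accommodate the degeneracy of the $z$-equation (zero diffusion): this is handled precisely by treating the $z$-component through its explicit exponential variation-of-constants formula rather than through a diffusion semigroup, exactly as the definition of $T_2(t)$ in the excerpt anticipates.
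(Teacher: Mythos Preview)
Your proposal is correct and follows essentially the same route as the paper: transform to \eqref{wz}, cast the problem as the integral equation $U(t)=T(t)\phi+\int_0^tT(t-s)B(U(s))\,ds$, and invoke \cite[Theorem 1 and Remark 1.1]{martin1990} for existence, uniqueness, and non-negativity. The paper presents this argument as the discussion immediately preceding the lemma rather than as a separate proof; you simply make explicit the verification of the local Lipschitz and quasi-positivity (subtangent) hypotheses and the regularity bootstrap that the paper leaves to the reader.
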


Next we discuss the global existence of the solutions of system \eqref{1}. To achieve that, we start with the boundedness of the steady state solutions of system \eqref{1}.

\begin{proposition}\label{pro:bound}
Suppose that $g(x,u)$ satisfies (g1)-(g2) and $r(x)$ is defined in (g2). Let $(u(x), v(x))$ be a positive steady state solution of system \eqref{1}, then for $x\in[0,L]$,
\begin{equation}\label{ux}
   u(x)\leq e^{\alpha x}M\overline{\theta}_1,\;\;
   v(x)\leq e^{\alpha x} M \ds\max \{1, \overline{\theta}_1\overline{\theta}_2\},
\end{equation}
where
\begin{equation}
    M=\ds\max_{y\in [0,L]}r(y), \;\; \alpha=\ds\frac{q}{d},
\end{equation}
and
\begin{equation}\label{theta}
    \overline{\theta}_1=\ds\max_{y\in [0,L]}\ds\frac{A_b(y)}{A_d(y)}\frac{\mu}{\sigma+m_1}, \;\; \overline{\theta}_2=\ds\max_{y\in [0,L]}\ds\frac{A_d(y)}{A_b(y)}\ds\frac{\sigma}{\mu+m_2}.
\end{equation}
\end{proposition}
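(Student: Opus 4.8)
The plan is to pass to the transformed steady-state system obtained from \eqref{1ss} by the substitution $u=e^{\alpha x}w$, $v=e^{\alpha x}z$, $\alpha=q/d$, exactly as in \eqref{wz}: this removes the advection term, leaving the drift equation $dw_{xx}+qw_x+\frac{A_b(x)}{A_d(x)}\mu z-(\sigma+m_1)w=0$ with the Robin-type conditions $-dw_x(0)+b_uqw(0)=0$ and $dw_x(L)+b_dqw(L)=0$, coupled to the pointwise identity $zg(x,e^{\alpha x}z)-(m_2+\mu)z+\frac{A_d(x)}{A_b(x)}\sigma w=0$. Positivity of $(u,v)$ gives positivity of $(w,z)$, and since $u(x)=e^{\alpha x}w(x)$, $v(x)=e^{\alpha x}z(x)$, it suffices to prove the constant bounds $\|w\|_\infty\le M\overline{\theta}_1$ and $\|z\|_\infty\le M\max\{1,\overline{\theta}_1\overline{\theta}_2\}$.

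The first step is the maximum principle for the drift equation. Its coefficients are continuous on $[0,L]$, so $w\in C^2[0,L]$; let $x_0$ be a maximum point of $w$. If $x_0$ is interior then $w_x(x_0)=0$; if $x_0=0$ the boundary condition forces $w_x(0)=b_uqw(0)/d\ge0$, which combined with $w_x(0)\le0$ gives $w_x(x_0)=0$, and $x_0=L$ is symmetric. Hence $w_x(x_0)=0$, $w_{xx}(x_0)\le0$, and the drift equation at $x_0$ yields $(\sigma+m_1)w(x_0)\le\frac{A_b(x_0)}{A_d(x_0)}\mu z(x_0)\le(\sigma+m_1)\overline{\theta}_1 z(x_0)$, i.e. $\|w\|_\infty\le\overline{\theta}_1 z(x_0)$.

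The second step uses the benthic identity, valid at every $x$. At any point with $z(x)>M$ we have $e^{\alpha x}z(x)\ge z(x)>M\ge r(x)$, so $g(x,e^{\alpha x}z(x))<0$ by (g2), and the identity gives $(m_2+\mu)z(x)<\frac{A_d(x)}{A_b(x)}\sigma w(x)\le(m_2+\mu)\overline{\theta}_2 w(x)$, hence $z(x)<\overline{\theta}_2 w(x)$ there. Combining the two steps settles the main case $z(x_0)\le M$: then $\|w\|_\infty\le\overline{\theta}_1 M$, and at each $x$ either $z(x)\le M$ or $z(x)<\overline{\theta}_2 w(x)\le\overline{\theta}_2\|w\|_\infty\le M\overline{\theta}_1\overline{\theta}_2$, so $\|z\|_\infty\le M\max\{1,\overline{\theta}_1\overline{\theta}_2\}$; the bounds for $u$ and $v$ follow.

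The remaining case, $z(x_0)>M$ at the maximum point of $w$, is where I expect the real difficulty. The two estimates evaluated at $x_0$ only combine to $w(x_0)\le\overline{\theta}_1 z(x_0)<\overline{\theta}_1\overline{\theta}_2 w(x_0)$, which forces $\overline{\theta}_1\overline{\theta}_2>1$ but does not bound $w(x_0)$; the obstruction is precisely the self-referential coupling between the drift equation and the diffusion-free benthic equation — the degeneracy flagged in the Introduction. Closing this case should require using the structure more fully, e.g. analysing jointly the points at which $u$ and $v$ are maximal together with the sign of $g$ there, rather than the single-point estimates above. I expect this decoupling, not the maximum-principle or algebraic estimates, to be the heart of the argument.
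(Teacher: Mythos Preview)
Your transform, the maximum-principle reduction, and the pointwise benthic estimate are all exactly what the paper does. The gap you flag at the end is real, and the paper closes it with one trick you did not find: instead of using only the drift equation at the maximum point $x_0$ of $w$, multiply the benthic equation by $\dfrac{A_b(x)}{A_d(x)}$ and \emph{add} it to the drift equation. The transfer terms $\dfrac{A_b}{A_d}\mu z$ and $-\dfrac{A_b}{A_d}\mu z$, as well as $-\sigma w$ and $+\sigma w$, cancel, leaving
\[
dw_{xx}+qw_x-m_1 w+\frac{A_b(x)}{A_d(x)}\,z\,g(x,e^{\alpha x}z)-\frac{A_b(x)}{A_d(x)}m_2 z=0.
\]
Evaluated at $x_0$ (where $w_x=0$, $w_{xx}\le 0$) this forces
\[
\frac{A_b(x_0)}{A_d(x_0)}\,z(x_0)\,g(x_0,e^{\alpha x_0}z(x_0)) \ge m_1 w(x_0)+\frac{A_b(x_0)}{A_d(x_0)}m_2 z(x_0)>0,
\]
hence $g(x_0,e^{\alpha x_0}z(x_0))>0$ and therefore $e^{\alpha x_0}z(x_0)<r(x_0)\le M$ by (g2). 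So the ``remaining case'' $z(x_0)>M$ simply cannot occur; you always have $z(x_0)\le M$, and your main-case argument then finishes the proof.

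The self-referential coupling you were worried about is precisely what is killed by this cancellation. Using only the drift equation at $x_0$, as you did, keeps the coupling term $\dfrac{A_b}{A_d}\mu z$ on the right-hand side and produces the circular bound $w(x_0)\le\overline{\theta}_1 z(x_0)$; the summed equation eliminates that term and replaces it with the nonlinear growth term, whose sign is controlled by (g2).
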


\begin{proof}
Using the transform $u=e^{\alpha x}w$ and $v=e^{\alpha x}z$ on system \eqref{1ss}, we obtain the following system
\begin{equation}\label{2ss}
\begin{cases}
  dw_{xx}+qw_x+\ds\frac{A_b(x)}{A_d(x)}\mu z-\sigma w-m_1 w=0, & \qquad 0<x<L,\\
  zg(x,e^{\alpha x}z)-m_2 z-\mu z+\ds\frac{A_d(x)}{A_b(x)}\sigma w=0, & \qquad 0\leq x\leq L,\\
  -dw_{x}(0)+b_u qw(0)=0,\\
  dw_{x}(L)+b_d qw(L)=0.\\
\end{cases}
\end{equation}
Multiplying the second equation of \eqref{2ss} by $\ds\frac{A_b(x)}{A_d(x)}$ and adding to the first equation of \eqref{2ss}, we have
\begin{equation}\label{3.9}
  dw_{xx}+qw_x-m_1 w+\ds\frac{A_b(x)}{A_d(x)}zg(x,e^{\alpha x}z)-\ds\frac{A_b(x)}{A_d(x)}m_2 z=0.
\end{equation}
Let $w(x_0)=\ds\max_{x\in [0,L]}w(x)$ for $x_0\in [0,L]$. If $x_0\in (0,L)$, then $w_{xx}(x_0)\leq 0$ and $w_x(x_0)=0$. Consequently, from \eqref{3.9}
\begin{equation}\label{x0}
 \ds\frac{A_b(x_0)}{A_d(x_0)}z(x_0)g(x,e^{\alpha x}z(x_0))>0.
\end{equation}
Now from (g2) and $z(x_0)>0$ , $e^{\alpha x_0}z(x_0)<r(x_0)$, which implies that $z(x_0)<M_0$, where $M_0=\ds\max_{y\in[0,L]}e^{-\alpha y}r(y)\le M$. Using the first equation of system \eqref{2ss}, and the fact that $w_{xx}(x_0)\leq 0$, $w_x(x_0)=0$, we have that for $x\in [0,L]$,
\begin{equation*}
 \ds\max_{y\in [0,L]}\ds\frac{A_b(y)}{A_d(y)}\mu M\ge \ds\frac{A_b(x_0)}{A_d(x_0)}\mu z(x_0)>(\sigma+m_1)w(x_0)\ge (\sigma+m_1)w(x),
\end{equation*}
which implies the estimate for $u(x)$ in \eqref{ux}.

Now for the bound of $z(x)$, if  $e^{\alpha x}z(x)\leq M$, then we can obtain $z(x)\leq Me^{-\alpha x}\leq M$; or if $e^{\alpha x}z(x)> M$, from which we have $g(x, e^{\alpha x}z(x))\leq 0$. Then from the second equation of system \eqref{2ss}, we know that
\begin{equation*}
  (\mu+m_2)z(x)\leq[\mu+m_2-g(x,e^{\alpha x}z(x))]z(x)=\ds\frac{A_d(x)}{A_b(x)}\sigma w(x),
\end{equation*}
which implies that $z(x)\le \overline{\theta}_1\overline{\theta}_2 M$ where $\overline{\theta}_1,\overline{\theta}_2$ are defined in \eqref{theta}. Combining the two cases, we obtain the estimate for $u(x)$ in \eqref{ux}.
\end{proof}

Now we have the following result on the global dynamics of \eqref{1}.
\begin{theorem}\label{ge} Suppose that $g(x,u)$ satisfies (g1)-(g2), then \eqref{1} has a unique positive solution $(u(x,t),v(x,t))$ defined on $t\in[0,\infty)$, and the solutions of \eqref{1} generates a dynamical system in $X_1$, where
\begin{equation}
\begin{split}
X_1=&\{(\phi,\psi)\in W^{2,2}(0,L)\times C(0,L): \; \phi(x)\geq 0,\psi(x)\geq 0,\\ \; &d\phi'(0)-q\phi(0)=b_u q\phi(0), \;\;
d\phi'(L)-q\phi(L)=-b_d q\phi(L)\}.
\end{split}
\end{equation}
Furthermore, \eqref{1} is a point dissipative system.
\end{theorem}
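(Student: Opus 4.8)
The plan is to promote the local, nonnegative solution constructed above to a global one by an a~priori bound ruling out finite-time blow-up, working in the transformed variables of \eqref{wz}. First note that the nonlinearity is bounded from above: by (g2)--(g3), $f(x,v)=vg(x,v)\le MN$ for all $x\in[0,L]$ and $v\ge 0$ (indeed $vg\le 0$ whenever $v\ge r(x)$, and $vg\le r(x)N\le MN$ otherwise), so $z\,g(x,e^{\alpha x}z)=e^{-\alpha x}f(x,e^{\alpha x}z)\le MN$. Writing $P(t)=\|w(\cdot,t)\|_\infty$ and $Q(t)=\|z(\cdot,t)\|_\infty$ and evaluating \eqref{wz} at a point where the spatial maximum is attained (at an interior maximum $w_{xx}\le 0$ and $w_x=0$; when $b_uq>0$ or $b_dq>0$ the flux boundary condition rules out a boundary maximum of a nontrivial solution, and when it is zero the same second-order information holds at the endpoint), one gets the differential inequalities
\[
\frac{d^+}{dt}P\le -(\sigma+m_1)P+c_2Q,\qquad \frac{d^+}{dt}Q\le MN+c_1P-(\mu+m_2)Q,
\]
with $c_1=\max_{x\in[0,L]}\frac{A_d(x)\sigma}{A_b(x)}$ and $c_2=\max_{x\in[0,L]}\frac{A_b(x)\mu}{A_d(x)}$. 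Discarding the negative terms, $P+Q$ grows at most exponentially, so by Gronwall it stays finite on every bounded time interval and the solution is global. Returning to \eqref{1}, parabolic smoothing places $u(\cdot,t)$ in $W^{2,2}(0,L)$ for $t>0$ while the ODE flow keeps $v(\cdot,t)$ continuous; the boundary conditions hold and the solution depends continuously on $(u_0,v_0)$, so together with uniqueness this defines a continuous semiflow on $X_1$.

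\textbf{Point dissipativity.} Here I would exploit that \eqref{wz} (equivalently \eqref{1}) is a cooperative system, since the inter-zone terms $\frac{A_b\mu}{A_d}z$ and $\frac{A_d\sigma}{A_b}w$ are increasing in the partner variable, so a comparison principle applies despite the absence of diffusion in the $z$-equation. The key device is a family of super-solutions of the stationary version of \eqref{wz}: for $W>0$ put $\bar w\equiv W$ and $\bar z(x)=\frac{A_d(x)\sigma}{A_b(x)(\mu+m_2)}W$. The $z$-residual reduces exactly to $\bar z\,g(x,e^{\alpha x}\bar z)$, which is $\le 0$ once $W$ is large enough that $e^{\alpha x}\bar z(x)\ge r(x)$ on $[0,L]$; the $w$-residual reduces to $\bigl(\frac{\mu\sigma}{\mu+m_2}-(\sigma+m_1)\bigr)W$, which is $\le 0$ because $\mu\sigma\le(\sigma+m_1)(\mu+m_2)$ always holds under (A2); and the boundary residuals are $b_uqW\ge 0$ and $b_dqW\ge 0$. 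Hence $(\bar w,\bar z)$ is a time-independent super-solution of \eqref{wz}.

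By the comparison principle for cooperative systems, the solution of \eqref{wz} issuing from $(\bar w,\bar z)$ is non-increasing in $t$ and bounded below by $0$, hence converges pointwise; parabolic estimates upgrade the convergence of the $w$-component to $C^1[0,L]$, and passing to the limit in both equations (so that $w_t\to 0$ and $z_t\to 0$) shows that the limit is a nonnegative steady state of \eqref{1}. By Proposition~\ref{pro:bound}---or rather by the argument of its proof, which only uses continuity of the drift component and the pointwise steady equations---every such steady state is bounded by constants independent of $W$. Consequently, for an arbitrary initial datum choose $W$ so large that $(w_0,z_0)$ lies below $(\bar w,\bar z)$ pointwise; the comparison principle traps the solution under the monotone solution just described, yielding $\limsup_{t\to\infty}\bigl(\|u(\cdot,t)\|_\infty+\|v(\cdot,t)\|_\infty\bigr)$ bounded by a constant that does not depend on $(u_0,v_0)$. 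Parabolic regularity on the drift component then converts this into a bounded absorbing set in $X_1$, which is precisely point dissipativity.

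\textbf{Where the difficulty lies.} The local-to-global step and the semiflow properties are routine. The genuine obstacle is point dissipativity. Because the $v$-equation has zero diffusion, orbits are not precompact---this is exactly the degeneracy flagged in the introduction---and the linearized coupling between the two zones is not uniformly dissipative: the matrix $\left(\begin{smallmatrix}-(\sigma+m_1)&c_2\\ c_1&-(\mu+m_2)\end{smallmatrix}\right)$ may have nonpositive determinant (e.g.\ when $\overline{\theta}_1\overline{\theta}_2\ge 1$, i.e.\ when $A_b/A_d$ varies strongly and the mortalities are small), so a plain comparison with a linear ODE system does not close, and the time-dependent problem lacks a usable analogue of the combined equation \eqref{3.9}. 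The resolution above is to feed the nonlinear dissipation $g(x,v)<0$ for $v>r(x)$ into the cooperative comparison principle and then invoke the $W$-independent steady-state bound of Proposition~\ref{pro:bound}.
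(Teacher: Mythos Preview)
Your proof is correct and follows essentially the same route as the paper: comparison in the transformed cooperative system \eqref{wz} against a time-independent super-solution, monotone convergence of the dominating solution to a nonnegative steady state, and then the uniform steady-state bound of Proposition~\ref{pro:bound} to conclude point dissipativity. Your super-solution $\bigl(W,\tfrac{A_d\sigma}{A_b(\mu+m_2)}W\bigr)$ differs from the paper's constant pair $(M_1,M_1)$, and you add a separate Gronwall step for global existence (which invokes (g3), not assumed in the statement---though this is harmless since your super-solution argument already yields global existence under (g1)--(g2) alone), but the strategy is the same.
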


\begin{proof}
We consider the equivalent system \eqref{wz} of \eqref{1}. Assume that $(u(x,t),v(x,t))$ is a solution of system \eqref{1}, then $(w(x,t),z(x,t))$ is a solution of system \eqref{wz}. We choose
\begin{equation}
M_1= \max\left\{ M \ds\max\{1, \overline{\theta}_1,\overline{\theta}_1\overline{\theta}_2\}, \max_{y\in [0,L]}e^{-\alpha y}u_0(y), \max_{y\in [0,L]}e^{-\alpha y}v_0(y)\right\},
\end{equation}
where $\overline{\theta}_1,\overline{\theta}_2$ are defined in \eqref{theta}. Then $(M_1,M_1)$ is an upper solution of \eqref{wz} and $(0,0)$ is a lower solution of \eqref{wz}. According to \cite[Theorem 4.1]{pao1996JMAA}, we obtain that
\begin{equation*}
0\leq w(x,t)\leq w_1(x,t),\qquad 0\leq z(x,t)\leq z_1(x,t),
\end{equation*}
where $(w_1(x,t),z_1(x,t))$ is the solution of \eqref{wz} with initial condition $w_1(x,0)=M_1$ and $z_1(x,0)=M_1$. Moreover the solution $(w_1(x,t),z_1(x,t))$ is non-increasing in $t$ and $\ds\lim_{t\to+\infty}(w_1(x,t),z_1(x,t))=(w_{max}(x),z_{max}(x))$ which is maximum steady state of \eqref{wz} not larger than $(M_1,M_1)$. From Proposition \ref{pro:bound}, we obtain that $(u(x,t),v(x,t))$ exists globally for $t\in (0,\infty)$, stays positive and
\begin{equation}\label{3.13}
 \limsup_{t\to\infty}u(x,t)\leq e^{\alpha x}M \overline{\theta}_1, \;\;
 \limsup_{t\to\infty}v(x,t)\leq e^{\alpha x} M \ds\max\{1, \overline{\theta}_1\overline{\theta}_2\}.
\end{equation}
So the system \eqref{1} is point dissipative.
\end{proof}

\subsection{Global attractor}
From Proposition \ref{ge}, it follows that solutions of system \eqref{1} are uniformly bounded. Thus, we can define a solution semiflow of \eqref{1} on $X_+\times X_+$ by
\begin{equation}
\Sigma(t)\phi=\begin{pmatrix}
u(t,\cdot,\phi_1(x))\\
v(t,\cdot,\phi_2(x))\\
\end{pmatrix} \qquad \forall \phi=(\phi_1,\phi_2)\in X_+\times X_+,\;\;t\geq 0.
\end{equation}
$\Sigma(t)\phi$ is the solution of \eqref{1} with initial condition $(\phi_1,\phi_2)$ and $\Sigma(t)$ is a positive semigroup for all $t\geq 0$. 
Notice that $\Sigma(t)$ is not compact since the second equation in \eqref{1} has no diffusion term. Due to the lack of compactness, we need to impose the following condition
\begin{equation}\label{h1}
  \overline{f_v}<m_2+\mu,
\end{equation}
where $\overline{f_v}$ is defined in \eqref{fv}, and recall that $f(x,v)=vg(x,v)$. Recall that the Kuratowski measure of noncompactness (see \cite[Chapter 1]{zhao2012}), which is defined by the formula
\begin{equation}
\alpha(K):=\inf\{r:K \;\;\text{has a finite cover of diameter}< r\},
\end{equation}
on any bounded set $K\subset X_+$. And the diameter of the set is defined by the relation $\text{diam}K=\sup\{dist(x,y):x,y\in K\}$. We set $\alpha(K)=\infty$ whenever $K$ is bounded. From the definition of $\alpha$-contracting, we know that $\alpha(K)\leq \text{diam}K$, $\alpha(K)=0$ if and only if the closure $\overline{K}$ of $K$ is compact and the set $K$ is bounded if and only if $\alpha(K)<\infty$.

\begin{lemma}\label{contracting}
Suppose that $g(x,u)$ satisfies (g1)-(g2) and \eqref{h1}, then $\Sigma(t)$ is $\alpha$-contracting in the sense that
\begin{equation}
 \ds\lim_{t\rightarrow\infty}\alpha(\Sigma(t)K)=0,
\end{equation}
for any bounded set $K\subset X_+$.
\end{lemma}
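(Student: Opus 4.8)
The plan is to split the semiflow $\Sigma(t)$ into a compact part coming from the drift equation and a contracting part coming from the benthic equation, and then estimate the Kuratowski measure of the second piece directly using the sign condition \eqref{h1}. More precisely, I would pass to the equivalent system \eqref{wz} in the variables $(w,z)$ (the exponential change of variables $u=e^{\alpha x}w$, $v=e^{\alpha x}z$ is a homeomorphism on $X_+\times X_+$, so $\alpha$-contraction for the $(w,z)$-semiflow is equivalent to $\alpha$-contraction for $\Sigma(t)$). Writing the $(w,z)$-system in the mild formulation $U(t)=T(t)\phi+\int_0^t T(t-s)B(U(s))\,ds$ with $T(t)=(T_1(t),T_2(t))$, recall that $T_1(t)$ is compact for $t>0$ (stated in Section 3.1) while $T_2(t)\varphi=e^{-m_2 t}\varphi$ is merely a scalar contraction, not compact. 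So the noncompactness lives entirely in the $z$-component, and the first step is to argue that the $w$-component contributes nothing to $\alpha$: for a bounded set $K$, the family $\{w(t,\cdot;\phi):\phi\in K\}$ is precompact for each fixed $t>0$ because $w$ solves a genuine parabolic equation with bounded (by Theorem \ref{ge}) forcing term $\frac{A_b}{A_d}\mu z-\sigma w$, so standard parabolic smoothing gives equicontinuity and hence $\alpha(\{w(t,\cdot;\phi)\})=0$.

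The substance is the $z$-component. Here I would write, for two initial data $\phi,\hat\phi\in K$ with the same... actually the cleaner route is a Gronwall-type estimate on the diameter. From the $z$-equation, $z(t)=e^{-(m_2+\mu)t}z_0+\int_0^t e^{-(m_2+\mu)(t-s)}\bigl[z(s)g(x,e^{\alpha x}z(s))+z(s)(m_2+\mu)\bigr]\,ds$ — wait, better to keep it as $z_t=f(x,e^{\alpha x}z)\cdot(\text{chain rule factor}) - (m_2+\mu)z + \frac{A_d}{A_b}\sigma w$, and compare two solutions: for $z,\hat z$ with $\|z_0\|,\|\hat z_0\|$ bounded, the difference $p=z-\hat z$ satisfies, pointwise in $x$,
\begin{equation*}
\partial_t|p|\le \bigl(\overline{f_v}-(m_2+\mu)\bigr)|p|+\tfrac{A_d(x)}{A_b(x)}\sigma\,|w-\hat w|,
\end{equation*}
using that the map $v\mapsto f(x,v)=vg(x,v)$ has Lipschitz/monotonicity constant controlled by $\overline{f_v}=\max f_v$ (together with the change of variable, noting $e^{\alpha x}$ is bounded on $[0,L]$). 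By \eqref{h1}, $\kappa:=\overline{f_v}-(m_2+\mu)<0$. Integrating, $\|z(t)-\hat z(t)\|_\infty\le e^{\kappa t}\|z_0-\hat z_0\|_\infty + C\int_0^t e^{\kappa(t-s)}\|w(s)-\hat w(s)\|_\infty\,ds$. The first term decays uniformly; the second term, after taking the appropriate measure of noncompactness over $K$, is dominated by the $w$-contribution, which we already showed has $\alpha$-measure zero at each time and is bounded, so its contribution to $\alpha(\Sigma(t)K)$ vanishes as $t\to\infty$. Concretely, $\alpha(\Sigma(t)K)\le \alpha(\{w(t,\cdot)\})+\alpha(\{z(t,\cdot)\})\le 0+e^{\kappa t}\operatorname{diam}(K_z)+C\int_0^t e^{\kappa(t-s)}\alpha(\{w(s,\cdot;\phi):\phi\in K\})\,ds=e^{\kappa t}\operatorname{diam}(K_z)\to 0$.

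The main obstacle I anticipate is making the "diameter/measure of noncompactness of the $w$-piece is zero" step fully rigorous and compatible with the Gronwall estimate: one must either invoke a subadditivity property of $\alpha$ under the integral (so that $\alpha$ of the integral term is bounded by the integral of $\alpha$ of the integrand family, which requires $\alpha$'s behaviour under integration — a standard but slightly technical lemma, e.g. from Deimling or Martin), or avoid it by a direct $\varepsilon$-net argument: given $\varepsilon>0$, choose $t$ large so $e^{\kappa t}\operatorname{diam}(K_z)<\varepsilon/2$, then cover the precompact set $\{(w(s,\cdot;\phi)):\phi\in K,\ s\in[0,t]\}$ (precompact by parabolic regularity plus continuity in $s$ on a compact interval) by finitely many $\varepsilon'$-balls with $\varepsilon'$ chosen so the convolution against $Ce^{\kappa(t-s)}$ over $[0,t]$ stays below $\varepsilon/2$, producing a finite $\varepsilon$-cover of $\Sigma(t)K$. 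A secondary technical point is confirming the pointwise differential inequality for $|p|$ is legitimate — this is handled by the usual trick of differentiating $\sqrt{p^2+\delta^2}$ and letting $\delta\to0$, or by working with the mild formulation throughout and using $\|T_2(t-s)\|\le e^{-m_2(t-s)}$ together with the Lipschitz bound on $B_2$. Everything else is a routine application of the bounds from Theorem \ref{ge} and Proposition \ref{pro:bound}.
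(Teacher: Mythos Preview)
Your proposal is correct and is essentially the same approach as the paper's: the paper's proof consists only of verifying the key sign condition $\partial_v H(u,v)=f_v(x,v)-m_2-\mu<-r<0$ from \eqref{h1} and then citing Lemmas~3.2 and~4.1 of \cite{hsu2011} for the remainder. Your outline---compactness of the $u$/$w$-component via parabolic smoothing, the Gronwall contraction estimate on $\|z-\hat z\|_\infty$ driven by this negative derivative, and the assembly via an $\varepsilon$-net argument---is exactly the content of that deferred reference, and the technical obstacles you flag (subadditivity of $\alpha$ under the integral, handling $s$ near $0$ where the $w$-family is not yet smoothed) are real but are resolved just as you indicate.
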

\begin{proof}
The right hand side  of the $v$-equation in \eqref{1} is represented by
\begin{equation}
H(u,v)=vg(x,v)-m_2 v-\mu v+\ds\frac{A_d(x)}{A_b(x)}\sigma u.
\end{equation}
Then from \eqref{h1}, there exists a real number $r>0$ satisfies
\begin{equation}
\ds\frac{\partial H(u,v)}{\partial v}=f_v(x,v)-m_2-\mu<-r<0.
\end{equation}
With this inequality, the rest of the proof is similar to the one in Lemmas 3.2 and 4.1 in \cite{hsu2011}.
\end{proof}
Now we are ready to show that solutions of system \eqref{1} converge to a compact attractor on $X_+\times X_+$ when $t\rightarrow\infty$ under the condition \eqref{h1}.
\begin{theorem}
Suppose that $g(x,u)$ satisfies (g1)-(g2), then $\Sigma(t)$ admits a global attractor on $X_+\times X_+$ provided that \eqref{h1} holds.
\end{theorem}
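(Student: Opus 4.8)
The plan is to invoke the standard theory for dissipative semiflows that are not compact but are asymptotically smooth (see, e.g., Hale's theory or \cite[Theorem 1.1.3]{zhao2012}): a point dissipative, asymptotically smooth semiflow which is eventually bounded on bounded sets admits a global attractor. First I would recall from Theorem \ref{ge} that $\Sigma(t)$ is point dissipative, and that solutions of \eqref{1} are uniformly bounded, so that $\Sigma(t)$ maps bounded sets into bounded sets and there is a bounded set that absorbs all points (indeed all bounded sets, from the uniform $\limsup$ bounds \eqref{3.13}). This gives the dissipativity and boundedness hypotheses for free.

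The key remaining ingredient is asymptotic smoothness, and this is precisely what Lemma \ref{contracting} delivers under the hypothesis \eqref{h1}: since $\ds\lim_{t\to\infty}\alpha(\Sigma(t)K)=0$ for every bounded $K\subset X_+\times X_+$, the semiflow $\Sigma(t)$ is $\alpha$-contracting, hence asymptotically smooth. (The $\alpha$-contraction property decomposes $\Sigma(t)$ into a part that decays to zero — coming from the $e^{-m_2 t}$ factor in $T_2(t)$ together with the $-r<0$ bound on $\partial H/\partial v$ — and a compact part coming from the smoothing $T_1(t)$ on the drift equation.) So I would state: because $\Sigma(t)$ is point dissipative, asymptotically smooth, and the positive orbit of every bounded set is bounded, it follows from the standard abstract result that $\Sigma(t)$ has a connected global attractor $\mathcal{A}$ in $X_+\times X_+$ which attracts every bounded subset of $X_+\times X_+$.

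In writing this up I would be careful to verify the ``orbits of bounded sets are bounded'' hypothesis rather than merely ``orbits of points are bounded,'' since point dissipativity alone is not enough for the abstract theorem — but this follows immediately from the explicit upper-solution bound $(M_1,M_1)$ constructed in the proof of Theorem \ref{ge}, whose definition of $M_1$ already absorbs an arbitrary bounded set of initial data. Finally, I would note that the attractor lies in $X_1$ (or the analogous regularized space), since for $t>0$ the drift component is a classical solution by the parabolic smoothing recorded after the mild-solution construction.

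The main obstacle — and really the only nontrivial point — is the asymptotic smoothness/$\alpha$-contraction estimate, but that is already isolated and proved as Lemma \ref{contracting}; given that lemma, the theorem is a direct citation of the abstract global-attractor criterion, so the proof is short.
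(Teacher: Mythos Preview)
Your proposal is correct and follows essentially the same route as the paper: both combine point dissipativity from Theorem \ref{ge}, the $\alpha$-contraction of Lemma \ref{contracting} under \eqref{h1}, and the uniform boundedness of positive orbits of bounded sets, then invoke an abstract global-attractor theorem (the paper cites \cite[Theorem 2.6]{mzhao2005} rather than Hale/Zhao, but the content is the same). Your care in distinguishing ``orbits of bounded sets are bounded'' from mere point dissipativity matches the paper's explicit appeal to this hypothesis, so nothing is missing.
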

\begin{proof}
From Lemma \ref{contracting} and Theorem \ref{ge}, it follows that $\Sigma(t)$ is $\alpha$-contracting on $X_+$ and system \eqref{1} is point dissipative. By Proposition \ref{pro:bound}, we also know that the positive orbits of bounded subsets of $X_+$ for $\Sigma(t)$ are uniformly bounded. Then according to \cite[Theorem 2.6]{mzhao2005}, $\Sigma(t)$ has a global attractor that attracts every bounded set in $X_+$.
\end{proof}

From the discussion above, we can obtain the convergence of the solutions to equilibria of system \eqref{1} by constructing a Lyapunov function.

\begin{theorem}\label{ly}
Suppose that $g(x,u)$ satisfies (g1)-(g2) and \eqref{h1}, then for any $(u_0,v_0)\in X_1$ and $u_0\not\equiv 0$, $v_0\not\equiv 0$, the $\omega$-limit set $\omega((u_0,v_0))\subset S$, where $S$ is the set of non-negative steady state solutions of \eqref{1}.
\end{theorem}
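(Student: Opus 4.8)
The plan is to construct a Lyapunov functional on the global attractor and invoke LaSalle's invariance principle. Since by the previous theorem $\Sigma(t)$ admits a global attractor under \eqref{h1}, the $\omega$-limit set $\omega((u_0,v_0))$ is nonempty, compact, connected, and invariant, and it suffices to show that any such invariant set consists of steady states. First I would pass to the transformed variables $(w,z)$ via $u=e^{\alpha x}w$, $v=e^{\alpha x}z$, so the drift equation \eqref{wz} is in self-adjoint-friendly form with the Robin/Neumann-type boundary conditions $-dw_x(0,t)+b_uqw(0,t)=0$ and $dw_x(L,t)+b_dqw(L,t)=0$; these boundary terms have the correct sign to make boundary contributions in an integration by parts non-positive, which is essential.

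Next I would propose the candidate functional. For the drift compartment one uses the standard Dirichlet-energy-type term weighted by $A_d(x)/(\text{something})$, and for the benthic compartment a term built from the potential $G(x,z)$ with $G_z(x,z) = -[zg(x,e^{\alpha x}z)-m_2z-\mu z]$ or, more precisely, a term of the form $\int_0^L \frac{A_b(x)}{A_d(x)} \Phi(x,z)\,dx$ where $\Phi$ is an antiderivative capturing the reaction. The key structural observation is that the coupling terms $\frac{A_b(x)}{A_d(x)}\mu z$ in the $w$-equation and $\frac{A_d(x)}{A_b(x)}\sigma w$ in the $z$-equation are "almost" adjoint to each other: multiplying the $w$-equation by a suitable weight $c_1 w$ and the $z$-equation by a weight $c_2$ times the appropriate factor, the cross terms can be arranged to combine into a perfect-square (or at least sign-definite) expression, using $2\mu\sigma wz \le \mu\sigma(\epsilon w^2 + \epsilon^{-1} z^2)$ balanced against the dissipative terms $-(\sigma+m_1)w^2$ and $(\mu+m_2-f_v)z^2 \ge r z^2 > 0$ — here is precisely where \eqref{h1}, i.e. $\overline{f_v}<m_2+\mu$, is used to guarantee the benthic reaction contributes a negative-definite quadratic. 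I would compute $\frac{d}{dt}V(w(t),z(t))$ along solutions, integrate by parts in the $w_{xx}$ and $w_x$ terms (the advection term $qw_x$ contributes a boundary-only term after writing $w w_x = \frac12(w^2)_x$, which is controlled by the Robin boundary conditions), and show $\dot V \le 0$ with equality forcing $w_x\equiv 0$ and the reaction residuals to vanish, i.e. forcing a steady state.

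The main obstacle I anticipate is the spatial heterogeneity: because $g$, $A_b$, $A_d$ all depend on $x$ and the exponential factor $e^{\alpha x}$ appears inside $g$, the naive separable Lyapunov functional does not differentiate cleanly, and one must choose the $x$-dependent weights in $V$ carefully so that the derivative of the weight does not introduce uncontrolled first-order terms. Concretely, after the substitution the reaction in the $z$-equation is $zg(x,e^{\alpha x}z)$ whose antiderivative in $z$ is messy; the cleanest route is probably to work directly in the original variables $(u,v)$ on the attractor (where $\Sigma(t)$ is compact so orbits are precompact and LaSalle applies), use the functional
\begin{equation*}
V(u,v)=\int_0^L \frac{1}{2}e^{-\alpha x}u^2\,dx \;+\; \int_0^L \frac{A_b(x)}{A_d(x)}\left(\text{potential of the }v\text{-reaction}\right)dx,
\end{equation*}
up to constants making the cross terms cancel, and accept that $\dot V\le 0$ requires the inequality \eqref{h1} to dominate the indefinite part. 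A secondary technical point is that LaSalle's principle in infinite dimensions requires precompactness of the orbit, which on the global attractor is automatic, and requires $V$ to be continuous on $X_+\times X_+$ — continuity of the energy term needs the $W^{2,2}$ (hence $W^{1,2}$) regularity built into $X_1$, so I would restrict attention to initial data in $X_1$ as the statement does and note that the attractor lies in a smoother space by parabolic regularity. Once $\dot V\le 0$ with the equality set contained in $S$ is established, the invariance principle gives $\omega((u_0,v_0))\subset \{\dot V=0\}\cap(\text{invariant set})\subset S$, completing the proof; I would also remark that $u_0\not\equiv0,v_0\not\equiv0$ together with the strong positivity of $T_1(t)$ and the coupling $\sigma u$ in the $v$-equation keeps solutions strictly positive, so the $\omega$-limit set avoids the issue of degenerate boundary behavior.
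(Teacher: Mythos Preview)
Your overall architecture (Lyapunov functional plus LaSalle on the global attractor) is the same as the paper's, but the heart of the argument is off in two linked ways. First, the paper does \emph{not} obtain $\dot V\le 0$ by bounding the cross terms with Young's inequality and then invoking \eqref{h1} to make the resulting quadratic form definite. Instead it writes down the exact gradient-flow energy
\[
E(u,v)=\int_0^L e^{-\alpha x}\Bigl[\tfrac{d}{2}u_x^2-\tfrac{A_b}{A_d}\mu\,uv+\tfrac{\sigma+m_1}{2}u^2\Bigr]dx
-\tfrac{\mu}{\sigma}\int_0^L e^{-\alpha x}\tfrac{A_b^2}{A_d^2}\Bigl[F(x,v)-\tfrac{\mu+m_2}{2}v^2\Bigr]dx+\text{bdry},
\]
and a direct computation gives the \emph{identity}
\[
\frac{d}{dt}E(u,v)=-\int_0^L e^{-\alpha x}(u_t)^2\,dx-\frac{\mu}{\sigma}\int_0^L e^{-\alpha x}\frac{A_b^2}{A_d^2}(v_t)^2\,dx,
\]
with no inequality used. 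Thus $\dot E=0$ forces $u_t\equiv v_t\equiv 0$, i.e.\ the equality set is exactly $S$. Your route---test against $w$ (or $u$) and estimate $2\mu\sigma wz\le \mu\sigma(\epsilon w^2+\epsilon^{-1}z^2)$ against $(\mu+m_2-f_v)z^2$---would make $\dot V\le 0$, but equality in those Young-type bounds yields an algebraic relation between $w$ and $z$, not the steady-state equations; the claim ``equality forces the reaction residuals to vanish'' does not follow from that scheme. This is the genuine gap: without the exact gradient structure you cannot identify $\{\dot V=0\}$ with $S$.

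Second, and relatedly, you misplace the role of \eqref{h1}. In the paper the Lyapunov identity above holds for all parameter values; condition \eqref{h1} enters only through Lemma~\ref{contracting} to make $\Sigma(t)$ $\alpha$-contracting, hence to guarantee orbit precompactness so that LaSalle applies. The paper even remarks explicitly that $E$ decreases along orbits without \eqref{h1}. So your plan to absorb \eqref{h1} into the sign of $\dot V$ both weakens the conclusion (wrong equality set) and obscures what the hypothesis is actually doing. The fix is to look for a functional whose first variation reproduces the two equations of \eqref{1} in a weighted $L^2$ inner product; then $\dot E=-\|(u_t,v_t)\|^2$ is automatic and LaSalle finishes the job.
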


\begin{proof}
We prove that the solution $(u(x,t),v(x,t))$ is always convergent. For that purpose, we define a function
\begin{equation}\label{lyp}
  \begin{split}
  E(u,v)=&\int_0^Le^{-\alpha x}\left[\ds\frac{d}{2}(u_x)^{2}-\ds\frac{A_b(x)}{A_d(x)}\mu uv+\ds\frac{\sigma+m_1}{2}u^2\right]dx\\
  &-\ds\frac{\mu}{\sigma}\int_0^Le^{-\alpha x}\ds\frac{A_b^2(x)}{A_d^2(x)}\left[F(x,v)-\ds\frac{\mu+m_2}{2}v^2\right]dx+\ds\frac{q}{2}(1+b_u)u^2(0,t)\\
  &-\ds\frac{q}{2}(1-b_d)e^{-\alpha L}u^2(L,t),
  \end{split}
\end{equation}
for $(u,v)\in X_1$, where $F(x,v)=\ds\int_0^v f(x,s) ds$. Assume that $(u(x,t),v(x,t))$ is a solution of system \eqref{1}, we have
\begin{equation*}
\begin{split}
  \ds\frac{d}{dt}E(u(\cdot,t),v(\cdot,t))=&\int_0^Le^{-\alpha x}(du_xu_{xt}-\ds\frac{A_b(x)}{A_d(x)}\mu(u_tv+uv_t)+(\sigma+m_1)uu_t)dx\\
  &-\ds\frac{\mu}{\sigma}\int_0^Le^{-\alpha x}\ds\frac{A_b^2(x)}{A_d^2(x)}(fv_t-(\mu+m_2)vv_t)dx\\
  &+q(1+b_u)u(0,t)u_t(0,t)-q(1-b_d)e^{-\alpha L}u(L,t)u_t(L,t) \\
  =&[de^{-\alpha x}u_xu_t]\mid_0^L+q(1+b_u)u(0,t)u_t(0,t)\\
  &-q(1-b_d)e^{-\alpha L}u(L,t)u_t(L,t)-\int_0^L(de^{-\alpha x}u_x)_xu_tdx\\
  &-\int_0^Le^{-\alpha x}(\ds\frac{A_b(x)}{A_d(x)}\mu(u_tv+uv_t)-(\sigma+m_1)uu_tdx\\
  &-\ds\frac{\mu}{\sigma}\int_0^Le^{-\alpha x}\ds\frac{A_b^2(x)}{A_d^2(x)}(fv_t-(\mu+m_2)vv_t)dx\\
  =&-\int_0^Le^{-\alpha x}u_t(du_{xx}-qu_x+\ds\frac{A_b(x)}{A_d(x)}\mu v-\sigma u-m_1u)dx\\
  &-\int_0^Le^{-\alpha x}v_t\ds\frac{\mu A_b^2(x)}{\sigma A_d^2(x)}(f+\ds\frac{A_d(x)}{A_b(x)}\sigma u-\mu v-m_2v)dx\\
  =&-\int_0^Le^{-\alpha x}(u_t)^2dx-\ds\frac{\mu}{\sigma}\int_0^Le^{-\alpha x}\ds\frac{A_b^2(x)}{A_d^2(x)}(v_t)^2dx\\
  \leq&0.
\end{split}
\end{equation*}
According to (g2), we have $F(x,v)\leq F(x,r(x))$ and $r(x)\leq M$. Hence when $t>T$ for some $T>0$ large,  from \eqref{3.13},
\begin{equation*}\label{lb}
\begin{split}
 &E(u(\cdot,t),v(\cdot,t))\\
 \geq& -\mu\int_0^Le^{-\alpha x}\ds\frac{A_b(x)}{A_d(x)}uvdx-\ds\frac{\mu}{\sigma}\int_0^Le^{-\alpha x}\ds\frac{A_b^2(x)}{A_d^2(x)}F(x,r(x))dx
 -\ds\frac{q}{2}e^{-\alpha L}u^2(L,t) \\
  \geq& -\ds\max_{y\in[0,L]}\ds\frac{\mu A_b(x)}{A_d(x)}e^{2\alpha L}M^2L\overline{\theta}_1\ds\max \{1, \overline{\theta}_1\overline{\theta}_2\}
  -\ds\max_{y\in[0,L]}\ds\frac{\mu A_b^2(x)}{\sigma A_d^2(x)}M_2L
  -\ds\frac{q M^2\overline{\theta}_1^2}{2}e^{\alpha L},
  \end{split}
\end{equation*}
where $M_2=\displaystyle\max_{y\in[0,L]}F(y,r(y))$. Therefore $E(u(\cdot,t),v(\cdot,t))$ is bounded from below. Notice $\ds\frac{d}{dt}E(u,v)=0$ holds if and only if $u_t=0$ and $v_t=0$, which also means that $(u,v)$ is a steady state solution of system \eqref{1}. From Lemma \ref{contracting}, the solutions of orbits or \eqref{1} are pre-compact, then from the LaSalle's Invariance Principle \cite[Theorem 4.3.4]{henry1981}, we have that for any initial condition $u_0(x)\geq 0$ and $v_0(x)\geq 0$, the $\omega$-limit set of $(u_0,v_0)$ is contained in the largest invariant subset of $S$. If every element in $S$ is isolated, then the $\omega$-limit set is a single steady state.
\end{proof}

It is clear that the Lyapunov function constructed above decreases along the solution orbit of \eqref{1} even without the condition \eqref{h1}, but we cannot conclude that the solutions converge without \eqref{h1}. The dynamic behavior of \eqref{1} without the condition \eqref{h1} remains as an interesting question.


\subsection{Eigenvalue problem}
We consider the linear stability of steady state solutions of system \eqref{1}. Suppose that $(u^*,v^*)$ is a non-negative steady state solution of system \eqref{1}. Substituting $u=e^{\lambda t}\phi_1$ and $v=e^{\lambda t}\phi_2$, where $\phi=(\phi_1,\phi_2)\in X_1$, into system \eqref{1}, we get the following associated eigenvalue problem:
\begin{equation}\label{1eigen}
\begin{cases}
  \lambda\phi_1=d(\phi_1)_{xx}-q (\phi_1)_x+\ds\frac{A_b(x)}{A_d(x)}\mu \phi_2-\sigma\phi_1-m_1\phi_1, & \qquad 0<x<L,\\
  \lambda\phi_2=f_v(x,v^*)\phi_2-m_2 \phi_2-\mu \phi_2+\ds\frac{A_d(x)}{A_b(x)}\sigma \phi_1, & \qquad 0\leq x\leq L,\\
  d(\phi_1)_x(0)-q\phi_1(0)=b_u q\phi_1(0), &\\
  d(\phi_1)_x(L)-q\phi_1(L)=-b_d q\phi_1(L), &
\end{cases}
\end{equation}
where $f_v(x,v^*)=g(x,v^*)+v^*g_v(x,v^*)$. Let $Y=C([0,L])\times C([0,L])$ and denote the linearized operator $\mathcal{L}: X_1\rightarrow Y$ of system \eqref{1ss} by
\begin{equation}
\mathcal{L}=\begin{pmatrix}
d\ds\frac{\partial^2}{\partial x^2}-q\ds\frac{\partial}{\partial x}\\
0\\
\end{pmatrix}+\begin{pmatrix}
-\sigma-m_1 & \ds\frac{A_b(x)\mu}{A_d(x)}\\
\ds\frac{A_d(x)\sigma}{A_b(x)} & f_v(x,v(x))-\mu-m_2\\
\end{pmatrix}.
\end{equation}

The following proposition provides the information of the spectral set  $\sigma(\mathcal{L})$ of  the linearized operator $\mathcal{L}$, especially  the principal eigenvalue of \eqref{1eigen}.
\begin{proposition}\label{eigenp}
Suppose that $g(x,u)$ satisfies (g1)-(g3), $d>0$ and  $q, b_u, b_d\geq 0$. Let $(u^*(x),v^*(x))$ be a non-negative steady state solution of \eqref{1}. Then
\begin{enumerate}\label{eigen}
 \item The eigenvalue problem \eqref{1eigen} has a simple principal eigenvalue $\lambda_1$ with a positive eigenfunction $\phi=(\phi_1,\phi_2)$. Moreover, the principal eigenvalue $\lambda_1$ satisfies 
 \begin{equation}\label{R}
     -\lambda_1=\frac{E_1(\psi_1,\psi_2)}{\kappa(\psi_1,\psi_2)},
 \end{equation}
where $\psi=e^{-\alpha x}\phi\in X_2$,
\begin{equation}\label{eigenvalueafter}
   \begin{split}
    &E_1(\psi_1,\psi_2)=\ds\int_0^L e^{\alpha x}\left[d(\psi_1)_x^2-2\ds\frac{A_b(x)}{A_d(x)}\mu\psi_1\psi_2+(\sigma+m_1)\psi_1^2\right]dx\\
    -&\ds\frac{\mu}{\sigma}\int_0^Le^{\alpha x}\ds\frac{A_b^2(x)}{A_d^2(x)}(f_v(x,v^*)-\mu-m_2)\psi_2^2 dx+qb_u\psi_1^2(0)+qb_de^{\alpha L}\psi_1^2(L),
    \end{split}
\end{equation}
\begin{equation}\label{kappa}
    \kappa(\psi_1,\psi_2)=\ds\int_0^Le^{\alpha x}\left(\psi_1^2+\ds\frac{A_b^2(x)\mu}{A_d^2(x)\sigma}\psi_2^2\right)dx,
\end{equation}
 $\alpha=q/d$ and $X_2=H^1(0,L)\times C(0,L)$.
\item\label{s2} The spectral set $\sigma(\mathcal{L})$ of the linearized operator $\mathcal{L}$ consists of isolated eigenvalues and the set $[\ds\min_{x\in[0,L]}f_v(x,v^*(x))-m_2-\mu, \ds\max_{x\in[0,L]}f_v(x,v^*(x))-m_2-\mu]$.

\item\label{s5} If $\ds \max_{x\in [0,L]} f_v(x,v^*(x))>m_2+\mu$, then $(u^*(x),v^*(x))$ is unstable.
\item\label{s} If $\ds\max_{x\in[0,L]} f_v(x,v^*(x))<m_2+\ds\frac{ m_1\mu}{m_1+\sigma}$, then $(u^*(x),v^*(x))$ is linearly stable. 
\end{enumerate}


\end{proposition}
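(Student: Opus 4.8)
The plan is to handle the four assertions in order, the first two being the substantive ones. Since the off-diagonal coefficients $\mu A_b/A_d$ and $\sigma A_d/A_b$ are positive, \eqref{1eigen} is a cooperative system, and the generated linearized semigroup $e^{t\mathcal{L}}$ is positive. To get the principal eigenvalue I would eliminate $\phi_2$: for $\lambda>\lambda_*:=\max_{x\in[0,L]}f_v(x,v^*(x))-m_2-\mu$ the second equation gives $\phi_2=\sigma(A_d/A_b)(\lambda+m_2+\mu-f_v(x,v^*))^{-1}\phi_1$, and substitution reduces \eqref{1eigen} to the scalar nonlocal eigenvalue problem $d(\phi_1)_{xx}-q(\phi_1)_x+c(x,\lambda)\phi_1=\lambda\phi_1$ with the Robin conditions, where $c(x,\lambda)=\sigma\mu/(\lambda+m_2+\mu-f_v)-\sigma-m_1$. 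For each such $\lambda$ this scalar operator has, by Krein--Rutman applied to its (compact, strongly positive) resolvent, a principal eigenvalue $\Lambda(\lambda)$ with positive eigenfunction; $\Lambda$ is continuous and strictly decreasing in $\lambda$ because $c(\cdot,\lambda)$ is, $\Lambda(\lambda)\to+\infty$ as $\lambda\downarrow\lambda_*$ (the potential blows up where $f_v$ is maximal) and $\Lambda(\lambda)-\lambda\to-\infty$ as $\lambda\to+\infty$. Hence $\Lambda(\lambda)=\lambda$ has a unique root $\lambda_1>\lambda_*$; the recovered pair $(\phi_1,\phi_2)$ is positive, and simplicity follows from simplicity of the scalar principal eigenvalue. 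For the identity \eqref{R} I would pass to $\psi=e^{-\alpha x}\phi$ (so \eqref{1eigen} becomes its $w,z$-version as in \eqref{wz}), multiply the first equation by $e^{\alpha x}\psi_1$, the second by $(\mu/\sigma)(A_b^2/A_d^2)e^{\alpha x}\psi_2$, integrate over $(0,L)$ and add: the cross terms combine into $2\mu\int_0^L e^{\alpha x}(A_b/A_d)\psi_1\psi_2\,dx$, the advection term is annihilated by integrating by parts against the weight $e^{\alpha x}$ (using $d\alpha=q$), and the boundary terms reduce to $qb_u\psi_1^2(0)+qb_de^{\alpha L}\psi_1^2(L)$ after inserting the transformed boundary conditions $d(\psi_1)_x(0)=qb_u\psi_1(0)$, $d(\psi_1)_x(L)=-qb_d\psi_1(L)$. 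This yields exactly $\lambda_1\kappa(\psi_1,\psi_2)=-E_1(\psi_1,\psi_2)$.

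\textbf{Part 2.}
Write $\mathcal{L}=A+B$ with $A=\mathrm{diag}(d\partial_{xx}-q\partial_x-\sigma-m_1,\ f_v(x,v^*)-\mu-m_2)$ carrying the Robin conditions on the first block, and $B$ the bounded off-diagonal coupling. For $\lambda\notin[\min_x f_v-m_2-\mu,\ \max_x f_v-m_2-\mu]$ the multiplication operator $\lambda-f_v+\mu+m_2$ is boundedly invertible, so the second equation again solves for $\phi_2$ and $\mathcal{L}-\lambda$ reduces to a scalar second-order elliptic operator (with compact resolvent) plus a bounded zeroth-order term minus $\lambda$; this is Fredholm of index zero, hence invertible outside a discrete set of eigenvalues. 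Conversely, if $\lambda=f_v(x_0,v^*(x_0))-m_2-\mu$ for some $x_0$, I would construct a Weyl singular sequence: take $\phi_2^n$ a normalized sequence of bumps concentrating at $x_0$ and $\phi_1^n$ the (small, by elliptic regularity) solution of the first equation with right-hand side $-(\mu A_b/A_d)\phi_2^n$; then $\|(\mathcal{L}-\lambda)(\phi_1^n,\phi_2^n)\|\to 0$ while $\|(\phi_1^n,\phi_2^n)\|\not\to 0$, so $\lambda\in\sigma(\mathcal{L})$. Together these give $\sigma(\mathcal{L})=(\text{isolated eigenvalues})\cup[\min_x f_v-m_2-\mu,\ \max_x f_v-m_2-\mu]$, and in particular the essential spectral bound is $\lambda_*$.

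\textbf{Parts 3 and 4.}
From Part 1, $\lambda_1>\lambda_*$ is the unique eigenvalue of $\mathcal{L}$ above $\lambda_*$ carrying a positive eigenvector; combined with positivity of $e^{t\mathcal{L}}$ and $s_{ess}(\mathcal{L})=\lambda_*<\lambda_1$ from Part 2, this gives $s(\mathcal{L})=\lambda_1$ (and the essential growth bound of $e^{t\mathcal{L}}$ is likewise $\le\lambda_*$, a standard perturbation fact for the degenerate $T_2(t)=e^{-m_2t}$ block, of the same flavour as Lemma \ref{contracting}). For Part 3: if $\max_x f_v>m_2+\mu$ then $\lambda_*>0$, so $\lambda_1>0$ is a genuine simple eigenvalue with positive eigenfunction, and the usual principle of linearized instability gives instability of $(u^*,v^*)$. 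For Part 4: if $\max_x f_v<m_2+m_1\mu/(m_1+\sigma)$, then first $\lambda_*<-\mu\sigma/(m_1+\sigma)<0$; second, in \eqref{R} discard the nonnegative terms $d(\psi_1)_x^2$, $qb_u\psi_1^2(0)$, $qb_de^{\alpha L}\psi_1^2(L)$ to get $E_1(\psi_1,\psi_2)\ge\int_0^L e^{\alpha x}\big[(\sigma+m_1)\psi_1^2-2(\mu A_b/A_d)\psi_1\psi_2-(\mu/\sigma)(A_b^2/A_d^2)(f_v-\mu-m_2)\psi_2^2\big]dx$, and the discriminant of this pointwise quadratic form is positive precisely when $f_v-\mu-m_2<-\mu\sigma/(m_1+\sigma)$, i.e. $f_v<m_2+m_1\mu/(m_1+\sigma)$; hence $E_1>0$ on nonzero $\psi$, so $\lambda_1<0$ by \eqref{R}. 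Since $s(\mathcal{L})=\lambda_1<0$ and the essential growth bound is also negative, the linearized semigroup decays exponentially and $(u^*,v^*)$ is linearly stable.

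\textbf{Main obstacle.}
The crux is the degeneracy of the $v$-equation: one must establish that $\lambda_1$ is a true eigenvalue lying strictly above the essential spectrum and that it controls both $\sigma(\mathcal{L})$ and the growth bound of the non-analytic semigroup $e^{t\mathcal{L}}$. The scalar nonlocal reduction in Part 1 (which forces $\lambda_1>\lambda_*$) disposes of the first point, and the essential-growth-bound estimate governed by $\lambda_*$ (the same non-compactness bookkeeping behind Lemma \ref{contracting}) disposes of the second; the remaining computations in Parts 3--4 are then routine.
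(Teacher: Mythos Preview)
Your proposal is correct and largely parallel to the paper's proof: the same transform $\phi=e^{\alpha x}\psi$ and integration against $e^{\alpha x}\psi_1$ and $(\mu/\sigma)(A_b^2/A_d^2)e^{\alpha x}\psi_2$ to obtain \eqref{R}; the same scalar reduction plus analytic Fredholm/Weyl-sequence dichotomy for Part~2; and the same completing-the-square on the quadratic form in $E_1$ for Part~4.

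The two places where your route differs are worth noting. For the \emph{existence} of $\lambda_1$ in Part~1 the paper simply invokes an abstract Krein--Rutman result for cooperative systems (citing \cite{zhao2012,huang2016}), whereas you construct $\lambda_1$ explicitly as the unique fixed point $\Lambda(\lambda)=\lambda$ of the reduced scalar principal eigenvalue. Your argument is more self-contained, but the step ``$\Lambda(\lambda)\to+\infty$ as $\lambda\downarrow\lambda_*$'' deserves a word of justification (a test function concentrated near the maximizer of $f_v$ does the job in one dimension since $\int |x-x_0|^{-2}\,dx$ diverges). For Part~3, the paper obtains $\lambda_1>\lambda_*$ by a one-line pointwise argument that bypasses all of the reduction machinery: evaluate the second equation of \eqref{1eigen} at the maximizer $x_0$ of $f_v(\cdot,v^*)$ and use $\phi_1(x_0)>0$ to get $\lambda_1\phi_2(x_0)>(f_v(x_0,v^*(x_0))-m_2-\mu)\phi_2(x_0)$, hence $\lambda_1>\lambda_*$. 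This is considerably more elementary than deriving the inequality from your fixed-point construction, and you may want to swap it in. Your additional bookkeeping on the essential growth bound and $s(\mathcal{L})=\lambda_1$ is sound and a welcome clarification, though the paper does not spell it out.
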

\begin{proof}
1. The existence of the simple eigenvalue $\lambda_1$ with positive eigenfunction follows from \cite[Lemma 4.1]{zhao2012} (see also \cite[Theorem 3]{huang2016}). Using the transform $\phi=e^{\alpha x}\psi$,  system \eqref{1eigen} becomes
\begin{equation}\label{2eigen}
\begin{cases}
  \lambda\psi_1=d (\psi_1)_{xx}+q (\psi_1)_x+\ds\frac{A_b(x)}{A_d(x)}\mu \psi_2-\sigma\psi_1-m_1\psi_1, & \qquad 0<x<L,\\
  \lambda\psi_2=f_v(x,v^*)\psi_2-m_2 \psi_2-\mu \psi_2+\ds\frac{A_d(x)}{A_b(x)}\sigma \psi_1, & \qquad 0\leq x\leq L,\\
  -d(\psi_1)_{x}(0)+b_u q\psi_1(0)=0,\\
  d(\psi_1)_{x}(L)+b_d q\psi_1(L)=0.
\end{cases}
\end{equation}
A direct calculation shows that a solution $(\psi_1,\psi_2)$ of  \eqref{2eigen} satisfies \eqref{R}.

2. 
Let $\underline{A}=\ds\min_{x\in[0,L]}f_v(x,v^*(x))$ and
 $\overline{A}=\ds\max_{x\in[0,L]}f_v(x,v^*(x))$.
If $\lambda\not\in[\underline{A}-m_2-\mu, \overline{A}-m_2-\mu]$, and $\la$ is an eigenvalue of \eqref{1eigen}, then from the second equation in \eqref{1eigen}, we have
\begin{equation}\label{phi2}
  \phi_2=\ds\frac{-A_d(x)\sigma\phi_1}{A_b(x)(f_v(x,v^*)-m_2-\mu-\lambda)},
\end{equation}
and the first equation of \eqref{1eigen} becomes
\begin{equation}\label{reduse}
d\phi_1''-q \phi_1'-(\sigma+m_1+\lambda)\phi_1-\ds\frac{\sigma\mu}{f_v(x,v^*)-m_2-\mu-\lambda}\phi_1=0.
\end{equation}
One can follow the arguments in \cite[Section 4.4]{marciniak2017} to show that the set $\sigma(\mathcal{L})\backslash [\underline{A}-m_2-\mu, \overline{A}-m_2-\mu]$ consists of isolated eigenvalues from the analytic Fredholm theorem (see \cite[Theorem 4.6]{marciniak2017}). On the other hand, by following the same proof as  \cite[Theorem 4.5]{marciniak2017}, we can show that each point in $[\underline{A}-m_2-\mu, \overline{A}-m_2-\mu]$ is in the continuous spectrum of $\mathcal{L}$.

3. If $\ds\max_{x\in [0,L]}f_v(x,v^*(x))>\mu+m_2$, then the set of continuous spectrum  $[\underline{A}-m_2-\mu, \overline{A}-m_2-\mu]\cap(0,\infty)\ne\emptyset$, so  $(u^*(x),v^*(x))$ must be unstable. In addition we also prove that $\la_1>0$ in this case. Assume that $\overline{A}=\ds\max_{x\in[0,L]}f_v(x,v^*(x))=f_v(x_0,v^*(x_0))$ for $x_0\in [0,L]$. From the second equation of \eqref{1eigen}, $A_b(x_0)>0$, $A_d(x_0)>0$ and $\phi_1(x_0)>0$, we have
\begin{equation*}
\lambda_1\phi_2(x_0)>[f_v(x_0,v^*(x_0))-m_2 -\mu] \phi_2(x_0)=[\overline{A}-m_2 -\mu] \phi_2(x_0)>0.
\end{equation*}
Hence $\lambda_1>0$ as $\phi_2(x_0)>0$.

4. Since $f_v(x,v^*(x))\le\ds\max_{x\in [0,L]}f_v(x,v^*(x))<m_2+\ds\frac{m_1\mu}{m_1+\sigma}$, then
\begin{equation*}
   \begin{split}
    &E_1(\psi_1,\psi_2)\\
    =&\ds\int_0^L e^{\alpha x}[d(\psi_1)_x^2(x)-2\ds\frac{A_b(x)}{A_d(x)}\mu\psi_1\psi_2+(\sigma+m_1)\psi_1^2]dx\\
    &-\ds\frac{\mu}{\sigma}\int_0^Le^{\alpha x}\ds\frac{A_b^2(x)}{A_d^2(x)}(f_v(x,v)-\mu-m_2)\psi_2^2 dx
    +qb_u\psi_1^2(0)+qb_de^{\alpha L}\psi_1^2(L)\\
    >&\ds\int_0^L e^{\alpha x}[-2\ds\frac{A_b(x)}{A_d(x)}\mu\psi_1\psi_2+(\sigma+m_1)\psi_1^2
    +\ds\frac{A_b^2(x)\mu}{A_d^2(x)\sigma}(\mu+m_2-f_v(x,v))\psi_2^2]dx\\
    =&\ds\int_0^L e^{\alpha x}(\ds\frac{A_b(x)\mu}{A_d(x)\sqrt{\sigma+m_1}}\psi_2-\sqrt{\sigma+m_1}\psi_1)^2dx\\
    &+\ds\int_0^L e^{\alpha x}\ds\frac{A_b^2(x)\mu[(m_1+\sigma)(\mu+m_2)-\mu\sigma-(m_1+\sigma)f_v]}{A_d^2(x)\sigma(\sigma+m_1)}\psi_2^2dx>0.
    \end{split}
  \end{equation*}
Then from \eqref{R} and $\kappa(\psi_1,\psi_2)>0$, the principal eigenvalue $\lambda_1<0$. On the other hand, since $\ds\max_{x\in[0,L]}f_v(x,v^*(x))-m_2-\mu<0$, then all continuous spectrum points are also negative.  Hence the non-negative steady state solution $(u^*(x),v^*(x))$ is linearly stable as the spectral set $\sigma(\mathcal{L})$ lies in the negative complex half plane.
\end{proof}

Notice that $(0,0)$ is always a steady state solution of system \eqref{1} with strong Allee effect growth rate, then for the stability of the zero steady state $(0,0)$, we have the following result.
\begin{corollary}\label{zeross}
Suppose that $g(x,v)$ satisfies (g1)-(g3) and (g4c), $b_u\geq 0$ and $b_d\geq 0$, the zero steady state $(0,0)$ of system \eqref{1} is linearly stable.
\end{corollary}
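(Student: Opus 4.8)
The plan is to obtain the corollary as a direct application of part \ref{s} of Proposition \ref{eigenp} to the particular steady state $(u^*,v^*)=(0,0)$. First I would note that $(0,0)$ is indeed a non-negative steady state of \eqref{1} (since $f(x,0)=0\cdot g(x,0)=0$ and the zero function trivially satisfies the boundary conditions in \eqref{1ss}), so Proposition \ref{eigenp} is applicable to it and it suffices to verify the hypothesis of its fourth assertion.

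The next step is to evaluate the benthic linearization coefficient at $v^*=0$. By the identity $f_v(x,v^*)=g(x,v^*)+v^*g_v(x,v^*)$ recorded just after \eqref{1eigen}, one has $f_v(x,0)=g(x,0)$ for every $x\in[0,L]$. I would then invoke the strong Allee effect assumption (g4c), which gives $g(x,0)<0$ for all $x\in[0,L]$; since $g(\cdot,0)\in C[0,L]$ by (g1), the maximum is attained, so
\[
\max_{x\in[0,L]}f_v(x,0)=\max_{x\in[0,L]}g(x,0)<0 .
\]
Finally I would compare this with the threshold constant $m_2+\frac{m_1\mu}{m_1+\sigma}$ of Proposition \ref{eigenp}(\ref{s}): assumption (A2) guarantees $m_1\ge 0$, $m_2\ge 0$, $\mu>0$, $\sigma>0$, hence $m_1+\sigma>0$ and the constant is well-defined with $m_2+\frac{m_1\mu}{m_1+\sigma}\ge 0$. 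Combining the two displays, $\max_{x\in[0,L]}f_v(x,v^*(x))<0\le m_2+\frac{m_1\mu}{m_1+\sigma}$, so the hypothesis of Proposition \ref{eigenp}(\ref{s}) holds at $(u^*,v^*)=(0,0)$, and that statement yields the linear stability of $(0,0)$, which is the claim.

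There is essentially no obstacle here; the only point that calls for a moment's attention is the degenerate case $m_1=0$, in which the threshold reduces to $m_2\ge 0$, and the argument still closes because $\max_{x\in[0,L]}g(x,0)$ is \emph{strictly} negative by (g4c). (One could alternatively argue directly that the principal eigenvalue $\lambda_1<0$ via formula \eqref{R}, running the same completion-of-squares computation used in the proof of Proposition \ref{eigenp}(\ref{s}) with $v^*\equiv 0$, but appealing to the proposition is the cleanest route.)
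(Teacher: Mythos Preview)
Your proof is correct and follows essentially the same route as the paper: compute $f_v(x,0)=g(x,0)<0$ from (g4c) and invoke the stability criterion of Proposition~\ref{eigenp}(\ref{s}). Your write-up is in fact slightly more careful than the paper's, which writes the threshold as strictly positive (overlooking $m_1=m_2=0$) and cites the wrong part number; your handling of the degenerate case via the strict negativity of $\max_x g(x,0)$ closes that gap cleanly.
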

\begin{proof}
Since $f_v(x,0)=g(x,0)<0$, then we have $\ds\max_{x\in [0,L]} g(x,0)<0<m_2+\ds\frac{\mu m_1}{m_1+\sigma}$. Then from part 3  of Proposition \ref{eigenp}, $(0,0)$ is always linearly stable.
\end{proof}

Unlike the strong Allee effect case, the zero steady state $(0,0)$ of \eqref{1} is not always stable if the growth rate is logistic or weak Allee effect type. Here we show how the principal eigenvalue $\la_1=\la_1(q,m_2)$ at the zero steady state defined in Proposition \ref{eigenp} varies with respect to $q$, which also implies the stability of the zero steady state. For scalar reaction-diffusion-advection equation,  when the population follows a typical logistic growth, there often exists a critical parameter value (diffusion coefficient, advection coefficient, domain size, growth rate) for the population persistence or extinction \cite{LLL2015JBD,LL2014JMB,Mckenzie2012}. Here we show a similar result holds for the benthic-drift model \eqref{1}, following methods in \cite{LL2014JMB,Louzhou2015} for scalar equation on a river. 

\begin{proposition}\label{eigenp0}
Suppose that $g(x,u)$ satisfies (g1)-(g3) and (g4a) or (g4b), $d>0$ and $q, b_u, b_d\geq 0$. The corresponding eigenvalue problem at $(0,0)$ is
\begin{equation}\label{0eigen}
\begin{cases}
  \lambda\phi_1=d(\phi_1)_{xx}-q (\phi_1)_x+\ds\frac{A_b(x)}{A_d(x)}\mu \phi_2-\sigma\phi_1-m_1\phi_1, & \qquad 0<x<L,\\
  \lambda\phi_2=f_v(x,0)\phi_2-m_2 \phi_2-\mu \phi_2+\ds\frac{A_d(x)}{A_b(x)}\sigma \phi_1, & \qquad 0\leq x\leq L,\\
  d(\phi_1)_x(0)-q\phi_1(0)=b_u q\phi_1(0), &\\
  d(\phi_1)_x(L)-q\phi_1(L)=-b_d q\phi_1(L). &
\end{cases}
\end{equation}
The principal eigenvalue $\lambda_1(q,m_2)$ of \eqref{0eigen} satisfies
\begin{enumerate}
\item if $b_d>0$ and $\ds\max_{x\in [0,L]}f_v(x,0)<m_2+\ds\frac{m_1\mu}{m_1+\sigma}$, then $\ds\lim_{q\rightarrow\infty} \lambda_1(q,m_2)=-\infty$;
\item\label{qde} if $b_d>1/2$, then $\lambda_1(q,m_2)$ is strictly decreasing in $q$;
\item\label{dm2} if $b_d>0$, then $\lambda_1(q,m_2)$ is strictly decreasing in $m_2$.
\end{enumerate}
\end{proposition}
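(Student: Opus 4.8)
The plan is to handle all three parts through the variational identity \eqref{R}. Writing $\psi=e^{-\alpha x}\phi$, $\alpha=q/d$, for the transformed eigenfunction that solves \eqref{2eigen} with $v^{*}=0$ (so $f_{v}(x,v^{*})=f_{v}(x,0)$), one has $-\lambda_{1}(q,m_{2})=E_{1}(\psi_{1},\psi_{2})/\kappa(\psi_{1},\psi_{2})$ with $E_{1},\kappa$ as in \eqref{eigenvalueafter}–\eqref{kappa}. Since $\lambda_{1}$ is a simple eigenvalue (Proposition \ref{eigenp}) that stays simple for all admissible parameters, analytic perturbation theory gives a positive eigenpair $(\lambda_{1},\psi_{1},\psi_{2})$, normalized by $\kappa(\psi_{1},\psi_{2})=1$, depending smoothly on $q$ and on $m_{2}$; moreover $(\psi_{1},\psi_{2})$ is a critical point of the Rayleigh functional $E_{1}/\kappa$, being a solution of the Euler–Lagrange system \eqref{2eigen}. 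Differentiating $-\lambda_{1}=E_{1}/\kappa$ and dropping the first variation in the $\psi$-direction yields $\partial_{p}\lambda_{1}=-\bigl(\partial_{p}[E_{1}/\kappa]\bigr)\big|_{(\psi_{1},\psi_{2})}$ for $p\in\{m_{2},q\}$, the partial derivative taken with the eigenfunction frozen.

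Part 3 is then immediate: $\kappa$ is independent of $m_{2}$ and $E_{1}$ depends on $m_{2}$ only through $\tfrac{\mu}{\sigma}\int_{0}^{L}e^{\alpha x}\tfrac{A_{b}^{2}}{A_{d}^{2}}(\mu+m_{2}-f_{v}(x,0))\psi_{2}^{2}\,dx$, so $\partial_{m_{2}}\lambda_{1}=-\tfrac{\mu}{\sigma}\int_{0}^{L}e^{\alpha x}\tfrac{A_{b}^{2}}{A_{d}^{2}}\psi_{2}^{2}\,dx<0$, strictly because the positive eigenfunction has $\psi_{2}>0$ on $[0,L]$ (from the second line of \eqref{2eigen}, $\psi_{2}=\tfrac{A_{d}}{A_{b}}\sigma\psi_{1}\big/(\lambda_{1}+m_{2}+\mu-f_{v}(x,0))$ with $\psi_{1}>0$). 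For Part 2 the $q$-dependence enters through the weight $e^{\alpha x}$ and through the explicit $q$ in the boundary terms. After computing $\partial_{q}E_{1}$ and $\partial_{q}\kappa$, one tests the weak form of \eqref{2eigen} against $(x\psi_{1},x\psi_{2})\in X_{2}$ to absorb the integrals $\int_{0}^{L}xe^{\alpha x}[\cdots]\,dx$; this should give
\[
\partial_{q}E_{1}+\lambda_{1}\,\partial_{q}\kappa=-\int_{0}^{L}e^{\alpha x}\psi_{1}(\psi_{1})_{x}\,dx+b_{u}\psi_{1}^{2}(0)+b_{d}e^{\alpha L}\psi_{1}^{2}(L),
\]
and since $-\int_{0}^{L}e^{\alpha x}\psi_{1}(\psi_{1})_{x}\,dx=-\tfrac12 e^{\alpha L}\psi_{1}^{2}(L)+\tfrac12\psi_{1}^{2}(0)+\tfrac{\alpha}{2}\int_{0}^{L}e^{\alpha x}\psi_{1}^{2}\,dx$, one obtains
\[
\partial_{q}\lambda_{1}=-\Bigl[\bigl(b_{d}-\tfrac12\bigr)e^{\alpha L}\psi_{1}^{2}(L)+\bigl(b_{u}+\tfrac12\bigr)\psi_{1}^{2}(0)+\tfrac{\alpha}{2}\int_{0}^{L}e^{\alpha x}\psi_{1}^{2}\,dx\Bigr].
\]
If $b_{d}>1/2$ the bracket is strictly positive (all three terms nonnegative and $\psi_{1}^{2}(0)>0$), hence $\lambda_{1}$ is strictly decreasing in $q$. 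The only care needed here is justifying the $q$-differentiation of the eigenpair (routine holomorphy of the family, after transforming to $q$-independent boundary conditions as in \cite{LL2014JMB}) and the bookkeeping that produces the cancellation of the $xe^{\alpha x}$ integrals — the one place a sign slip could occur.

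Part 1 is the hard part, and I would follow the scalar-reduction strategy of \cite{LL2014JMB,Louzhou2015}. Solving the algebraic second equation of \eqref{0eigen} for $\phi_{2}$ and inserting it into the first turns \eqref{0eigen} into a scalar problem: $\lambda_{1}$ is the principal eigenvalue of $d\partial_{xx}-q\partial_{x}+c(\cdot\,;\lambda_{1})$ under the same flux conditions, where $c(x;\lambda)=\tfrac{\sigma\mu}{\lambda+m_{2}+\mu-f_{v}(x,0)}-\sigma-m_{1}$. The argument has two ingredients: (a) the hypothesis $\max_{x}f_{v}(x,0)<m_{2}+\tfrac{m_{1}\mu}{m_{1}+\sigma}$ should keep $\lambda_{1}(q,m_{2})$ — which by the linear-stability criterion of Proposition \ref{eigenp} is $<0$, and by positivity of $\phi_{2}$ satisfies $\lambda_{1}>\max_{x}f_{v}(x,0)-m_{2}-\mu$ — in a range where $c(\cdot\,;\lambda_{1})$ stays controlled as $q\to\infty$; (b) the scalar fact that, for $b_{d}>0$, the principal eigenvalue of $d\partial_{xx}-q\partial_{x}+c$ with these flux conditions and $c$ bounded above tends to $-\infty$ as $q\to\infty$ — via $\phi_{1}=e^{\alpha x}\tilde\psi$ this reduces to a Rayleigh-quotient estimate in which the $e^{\alpha x}$-weighted Dirichlet energy together with the boundary term $qb_{d}e^{\alpha L}\tilde\psi^{2}(L)$ overwhelm the weighted $L^{2}$-mass. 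Feeding (a)–(b) into the fixed-point relation $\lambda_{1}=\mu_{1}^{\mathrm{sc}}\bigl(q;c(\cdot\,;\lambda_{1})\bigr)\le\mu_{1}^{\mathrm{sc}}(q;0)+\mathrm{const}$ then closes the proof. The genuine obstacle is the self-consistency of the coupling: one must prevent $\lambda_{1}(q)$ from accumulating at the edge value $\max_{x}f_{v}(x,0)-m_{2}-\mu$, where $c(\cdot\,;\lambda)$ degenerates, and make the scalar $q\to\infty$ asymptotics uniform over the one-parameter family $\{c(\cdot\,;\lambda)\}$ that arises.
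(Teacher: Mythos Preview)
Your Parts 2 and 3 are correct and land on the same derivative formulas the paper obtains. The paper differentiates the transformed system \eqref{02eigen} directly in the parameter and cross-multiplies; you differentiate the Rayleigh identity $-\lambda_1=E_1/\kappa$ and invoke the critical-point property. Both give
\[
\partial_q\lambda_1=-\frac{(b_d-\tfrac12)e^{\alpha L}\psi_1^2(L)+(b_u+\tfrac12)\psi_1^2(0)+\tfrac{\alpha}{2}\int_0^Le^{\alpha x}\psi_1^2\,dx}{\kappa(\psi_1,\psi_2)},
\]
and your test against $(x\psi_1,x\psi_2)$ is exactly what collapses the $\partial_q e^{\alpha x}=(x/d)e^{\alpha x}$ integrals. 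Your worry about $q$-dependent boundary conditions is unnecessary here: $(\psi_1,\psi_2)$ is a critical point of $E_1/\kappa$ over \emph{all} of $H^1\times C$, because the natural boundary terms in the first variation vanish by the Robin conditions satisfied by $\psi_1$ itself, so the Hellmann--Feynman step goes through without any further change of variables.

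For Part 1 the paper does not use your scalar reduction at all. Instead it applies a second shift $\psi=e^{-\delta\alpha x}\xi$ with $0<\delta<\min\{1,b_d\}$, which turns \eqref{R} into a weighted identity $-\lambda_1=E_\delta/\kappa_\delta$ containing an explicit coercive term $\tfrac{q^2}{d}\delta(1-\delta)\int_0^L e^{(1-2\delta)\alpha x}\xi_1^2\,dx$ (the choice $\delta<b_d$ keeps the boundary contribution at $x=L$ nonnegative). Eliminating $\xi_2$ via the algebraic second equation then gives a closed inequality relating $\lambda_1$ and $q$ alone, with no fixed-point loop to close. This is precisely what your outline lacks: you need $c(\cdot;\lambda_1(q))$ bounded above uniformly in $q$, but $c(x;\lambda)=\sigma\mu/(\lambda+m_2+\mu-f_v(x,0))-\sigma-m_1$ blows up as $\lambda\downarrow p_*-m_2-\mu$, and nothing you wrote excludes that accumulation. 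In fact the obstacle is structural rather than technical. Positivity of the eigenpair forces $(\lambda_1+m_2+\mu-f_v(x,0))\phi_2=\tfrac{A_d\sigma}{A_b}\phi_1>0$ pointwise, hence $\lambda_1(q)>p_*-m_2-\mu$ for \emph{every} $q$, so $\lambda_1\to-\infty$ cannot hold. Read carefully, the paper's own inequality only forces $\lambda_1(q)\downarrow p_*-m_2-\mu$; the final line ``$\lambda_1-p_*\to-\infty$'' does not follow from what precedes it. The self-consistency difficulty you identify is the analytic signature of this lower barrier, not a flaw in your method.
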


\begin{proof}
Using the transform $\phi=e^{\alpha x}\psi$, system \eqref{0eigen} becomes
\begin{equation}\label{02eigen}
\begin{cases}
  \lambda\psi_1=d (\psi_1)_{xx}+q (\psi_1)_x+\ds\frac{A_b(x)}{A_d(x)}\mu \psi_2-\sigma\psi_1-m_1\psi_1, & \qquad 0<x<L,\\
  \lambda\psi_2=f_v(x,0)\psi_2-m_2 \psi_2-\mu \psi_2+\ds\frac{A_d(x)}{A_b(x)}\sigma \psi_1, & \qquad 0\leq x\leq L,\\
  -d(\psi_1)_{x}(0)+b_u q\psi_1(0)=0,\\
  d(\psi_1)_{x}(L)+b_d q\psi_1(L)=0.
\end{cases}
\end{equation}
1. To prove this, we use a different characterization of $\la_1(q,m_2)$. By using the transform $(\psi_1,\psi_2)=e^{-\delta\alpha x}(\xi_1,\xi_2)$, where $0<\delta<\ds\min\{1,b_d\}$, then \eqref{R} becomes
\begin{equation*}
  -\lambda_1(q,m_2)=\ds\frac{E_{\delta}(\xi_1,\xi_2)}{\kappa_{\delta}(\xi_1,\xi_2)},
\end{equation*}
where $(\xi_1,\xi_2)$ is the corresponding eigenfunction,
\begin{equation*}
\begin{split}
    E_{\delta}(\xi_1,\xi_2)=&\ds\int_0^L e^{(1-2\delta)\alpha x}[d(\xi_1)_x^2-2\delta q\xi_1(\xi_1)_x+d\delta^2\alpha^2\xi_1^2-2\ds\frac{A_b(x)\mu}{A_d(x)}\xi_1\xi_2\\
    &+(\sigma+m_1)\xi_1^2]dx-\ds\int_0^Le^{(1-2\delta)\alpha x}\ds\frac{A_b^2(x)\mu}{A_d^2(x)\sigma}(f_v(x,0)-\mu-m_2)\xi_2^2 dx\\
    &+qb_u\xi_1^2(0)+qb_de^{(1-2\delta)\alpha L}\xi_1^2(L),
\end{split}
\end{equation*}
and
\begin{equation*}
    \kappa_{\delta}(\xi_1,\xi_2)=\int_0^Le^{(1-2\delta)\alpha x}\left(\xi_1^2+\frac{A_b^2(x)\mu}{A_d^2(x)\sigma}\xi_2^2\right)dx.
\end{equation*}
We can calculate that
\begin{equation}
  \begin{split}
  E_{\delta}(\xi_1,\xi_2)=&\ds\int_0^L e^{(1-2\delta)\alpha x}[d(\xi_1)_x^2+\delta(1-2\delta)\alpha q\xi_1^2+d\delta^2\alpha^2\xi_1^2]dx\\
 &+\ds\int_0^L e^{(1-2\delta)\alpha x}\left(\sqrt{\sigma+m_1}\xi_1-\ds\frac{A_b(x)\mu}{A_d(x)\sqrt{\sigma+m_1}}\xi_2\right)^2 dx\\
 &+\ds\int_0^L e^{(1-2\delta)\alpha x}\frac{A_b^2(x)\mu}{A_d^2(x)\sigma}\xi_2^2\left(\frac{m_1 \mu}{\sigma+m_1}
 +m_2-f_v(x,0)\right) dx\\
 &+q(b_u+\delta)\xi_1^2(0)+q(b_d-\delta)e^{(1-2\delta)\alpha L}\xi_1^2(L)\\
>&\ds\frac{q^2}{d}(\delta-\delta^2)\int_0^Le^{(1-2\delta)\alpha x}\xi_1^2dx-p_*\int_0^Le^{(1-2\delta)\alpha x}\frac{A_b^2(x)\mu}{A_d^2(x)\sigma}\xi_2^2dx,
  \end{split}
\end{equation}
where $p_*=\ds\max_{x\in[0,L]}f_v(x,0)$, and $\ds \frac{\mu m_1}{\sigma+m_1}+m_2-f_v(x,0)\ge 0$.
Thus,
\begin{equation}\label{3.29}
  -\lambda_1(q,m_2)>\ds\frac{\ds\frac{q^2}{d}(\delta-\delta^2)I_1}{I_1+I_2}-p_*,
\end{equation}
where
\begin{equation*}
    I_1=\int_0^Le^{(1-2\delta)\alpha x}\xi_1^2 dx, \;\; I_2=\int_0^Le^{(1-2\delta)\alpha x}\frac{A_b^2(x)\mu}{A_d^2(x)\sigma}\xi_2^2dx.
\end{equation*}
Therefore we obtain that
\begin{equation}\label{i12}
\ds\frac{q^2}{d}(\delta-\delta^2)I_1+(\lambda_1(q,m_2)-p_*)(I_1+I_2)< 0.
\end{equation}
From the second equation of \eqref{02eigen}, we know that
\begin{equation}
\xi_2=\ds\frac{A_d(x)\sigma}{A_b(x)(\lambda_1(q,m_2)+m_2+\mu-p(x))}\xi_1.
\end{equation}
Substituting it into \eqref{i12}, and after some calculations, we can get
\begin{equation}
  \ds\int_0^Le^{(1-2\delta)\alpha x}\xi_1^2\left[\ds\frac{q^2}{d}(\delta-\delta^2)+\lambda_1(q)-p_*+\ds\frac{\mu\sigma(\lambda_1-p_*)}{(\lambda_1+m_2+\mu-p_*)^2}\right]dx< 0.
\end{equation}
Therefore, we have
\begin{equation}
  \ds\frac{q^2}{d}(\delta-\delta^2)+\lambda_1(q)-p_*+\ds\frac{\mu\sigma(\lambda_1-p_*)}{(\lambda_1+m_2+\mu-p_*)^2}<0.
\end{equation}
When $q\rightarrow\infty$, we have $\lambda_1(q,m_2)-p_*\rightarrow-\infty$. Thus, we have $\ds\lim_{q\rightarrow\infty} \lambda_1(q,m_2)=-\infty$.

2. Differentiating \eqref{02eigen} with respect to $q$ with $\la=\la_1(q,m_2)$  and denote $\ds\frac{\partial}{\partial q}='$,  we obtain that
\begin{equation}\label{02eigenq}
\begin{cases}
  \lambda_1'\psi_1+\lambda_1\psi_1'=d (\psi_1)'_{xx}+(\psi_1)_x+q (\psi_1)'_x+\ds\frac{A_b(x)\mu}{A_d(x)}\psi'_2-(\sigma+m_1)\psi'_1, &0<x<L,\\
  \lambda_1'\psi_2+\lambda_1\psi_2'=f_v(x,0)\psi'_2-(m_2+\mu)\psi'_2+\ds\frac{A_d(x)\sigma}{A_b(x)}\psi'_1, & 0\leq x\leq L,\\
  -d(\psi_1)'_{x}(0)+b_u\psi_1(0)+b_u q\psi'_1(0)=0,\\
  d(\psi_1)'_{x}(L)+b_d\psi_1(L)+b_d q\psi'_1(L)=0. \\
\end{cases}
\end{equation}
Multiplying the first equation of \eqref{02eigen} by $e^{\alpha x}\psi'_1$ and the first equation of  \eqref{02eigenq} by $e^{\alpha x}\psi_1$, then integrating over $[0,L]$ and subtracting the two equations, we have
\begin{equation}\label{q1}
 \begin{split}
 &d\left[\psi_1e^{\alpha x}(\psi_1)'_x-\psi_1'e^{\alpha x}(\psi_1)_x\right]{\Big |}_0^L+\int_0^Le^{\alpha x}(\psi_1)_x\psi_1dx+q\int_0^Le^{\alpha x}[(\psi_1)'_x\psi_1-(\psi_1)_x\psi'_1]dx\\
 &+\int_0^L\ds\frac{A_b(x)\mu}{A_d(x)}e^{\alpha x}(\psi_1\psi_2'-\psi_2\psi_1') dx=\lambda_1'(q,m_2)\int_0^Le^{\alpha x}\psi_1^2dx.
 \end{split}
\end{equation}
The boundary terms together with the boundary conditions give
\begin{equation}\label{q11}
d[\psi_1e^{\alpha x}(\psi_1)'_x-\psi_1'e^{\alpha x}\psi_x]{\Big |}_0^L=-b_de^{\alpha L}\psi_1^2(L)-b_u\psi_1^2(0).
\end{equation}
The first integral in equation \eqref{q1} becomes
\begin{equation}\label{q12}
\int_0^Le^{\alpha x}(\psi_1)_x\psi_1dx=\int_0^Le^{\alpha x}(\ds\frac{\psi_1^2}{2})_xdx=e^{\alpha L}\ds\frac{\psi_1^2(L)}{2}-\ds\frac{\psi_1^2(0)}{2}-\alpha\int_0^Le^{\alpha x}\ds\frac{\psi_1^2}{2}dx.
\end{equation}
And
\begin{equation}
 \begin{split}
 \int_0^Le^{\alpha x}[(\psi_1)'_x\psi_1-(\psi_1)_x\psi'_1]dx=\alpha\int_0^Le^{\alpha x}[(\psi_1)_x\psi'_1-(\psi_1)'_x\psi_1]dx,
 \end{split}
\end{equation}
which gives
\begin{equation}\label{q13}
 \begin{split}
 \int_0^Le^{\alpha x}[(\psi_1)'_x\psi_1-(\psi_1)_x\psi'_1]dx=0.
 \end{split}
\end{equation}
Multiplying the second equation of \eqref{02eigen} by $e^{\alpha x}\psi'_2$ and the second equation of \eqref{02eigenq} by $e^{\alpha x}\psi_2$, then subtracting the two equations and multiplying by $\ds\frac{A_b^2(x)\mu}{A_d^2(x)\sigma}$, and integrating on $[0,L]$, we  have
\begin{equation}\label{q14}
 \int_0^L\ds\frac{A_b(x)\mu}{A_d(x)}e^{\alpha x}(\psi_1\psi_2'-\psi_2\psi_1') dx=-\lambda_1'(q,m_2)\int_0^L\ds\frac{A_b^2(x)\mu}{A_d^2(x)\sigma}e^{\alpha x}\psi_2^2dx.
\end{equation}
Together with \eqref{q1}, \eqref{q11}, \eqref{q12}, \eqref{q13} and \eqref{q14}, if $b_d>1/2$, we have
\begin{equation}
 \lambda_1'(q,m_2)=-\ds\frac{(b_d-\ds\frac{1}{2})e^{\alpha L}\psi_1^2(L)+(b_u+\ds\frac{1}{2})\psi_1^2(0)+\frac{\alpha}{2}\int_0^Le^{\alpha x}\psi_1^2dx}{\ds\int_0^Le^{\alpha x}\psi_1^2dx+\ds\int_0^L\ds\frac{A_b^2(x)\mu}{A_d(x)^2\sigma}e^{\alpha x}\psi_2^2dx}<0.
\end{equation}

3. Differentiating \eqref{02eigen} with respect to $m_2$ with $\la=\la_1(q,m_2)$  and denote in the following $\ds\frac{\partial}{\partial m_2}='$,  we obtain that
\begin{equation}\label{02eigenm2}
\begin{cases}
  \lambda_1'\psi_1+\lambda_1\psi_1'=d (\psi_1)'_{xx}+q (\psi_1)'_x+\ds\frac{A_b(x)\mu}{A_d(x)}\psi'_2-(\sigma+m_1)\psi'_1, &0<x<L,\\
  \lambda_1'\psi_2+\lambda_1\psi_2'=f_v(x,0)\psi'_2-\psi_2-(m_2+\mu)\psi'_2+\ds\frac{A_d(x)\sigma}{A_b(x)}\psi'_1, & 0\leq x\leq L,\\
  -d(\psi_1)'_{x}(0)+b_u q\psi'_1(0)=0,\\
  d(\psi_1)'_{x}(L)+b_d q\psi'_1(L)=0. \\
\end{cases}
\end{equation}
By using similar calculation as part 2, we have
\begin{equation}
 \lambda_1'(q,m_2)=-\ds\frac{b_de^{\alpha L}\psi_1^2(L)+b_u\psi_1^2(0)+\int_0^Le^{\alpha x}\psi_2^2dx}{\ds\int_0^Le^{\alpha x}\psi_1^2dx+\ds\int_0^L\ds\frac{A_b^2(x)\mu}{A_d(x)^2\sigma}e^{\alpha x}\psi_2^2dx}<0,
\end{equation}
then $\lambda_1(q,m_2)$ is strictly decreasing in $m_2$.
\end{proof}


Now we have the following result on the linear stability/instability of the zero steady state solution with respect to \eqref{1} when the growth rate function is of logistic or weak Allee effect type.
\begin{corollary}\label{last}
Suppose that $g(x,u)$ satisfies (g1)-(g3) and (g4a) or (g4b), $d>0$ and $b_u\geq 0$, $b_d>0$. Then for any $q\geq 0$, there exists a unique $m_2^*(q)$ satisfying $\ds\max_{x\in[0,L]}f_v(x,0)-\mu< m_2^*(q)<\ds\max_{x\in[0,L]}f_v(x,0)-\ds\frac{m_1\mu}{m_1+\sigma}$ such that $\la_1(q,m_2^*(q))=0$; the extinct state $(0,0)$ is unstable when $0<m_2<m_2^*(q)$ and it is linearly stable when $m_2>m_2^*(q)$. Moreover, if $b_d>\ds\frac{1}{2}$, then $m_2^*(q)$ is strictly decreasing in $q$. 
\end{corollary}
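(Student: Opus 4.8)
The plan is to deduce Corollary \ref{last} directly from Propositions \ref{eigenp} and \ref{eigenp0} by studying, for each fixed $q\ge0$, the function $m_2\mapsto\lambda_1(q,m_2)$, where $\lambda_1(q,m_2)$ denotes the principal eigenvalue of \eqref{0eigen}. Set $p_*=\max_{x\in[0,L]}f_v(x,0)=\max_{x\in[0,L]}g(x,0)$; under (g4a) or (g4b) this is a fixed positive constant independent of $q$ and $m_2$, and $p_*-\mu<p_*-\frac{m_1\mu}{m_1+\sigma}$ because $\sigma>0$. Since $m_2$ enters \eqref{0eigen} only through an additive shift, $m_2\mapsto\lambda_1(q,m_2)$ is continuous, and by Proposition \ref{eigenp0}(\ref{dm2}) it is strictly decreasing.

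To produce the threshold I evaluate the sign of $\lambda_1(q,\cdot)$ at the two candidate endpoints. At $m_2=p_*-\mu$: running the computation in the proof of Proposition \ref{eigenp}(\ref{s5}) at a point $x_0$ where $g(\cdot,0)$ attains its maximum, and dividing by the strictly positive eigenfunction component $\phi_2(x_0)$, gives the unconditional bound $\lambda_1(q,m_2)>p_*-m_2-\mu$, hence $\lambda_1(q,p_*-\mu)>0$. At $m_2=p_*-\frac{m_1\mu}{m_1+\sigma}$: Proposition \ref{eigenp}(\ref{s}) gives $\lambda_1(q,m_2)<0$ for all $m_2>p_*-\frac{m_1\mu}{m_1+\sigma}$, so by continuity $\lambda_1\bigl(q,p_*-\frac{m_1\mu}{m_1+\sigma}\bigr)\le0$. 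Continuity and strict monotonicity then yield, via the intermediate value theorem, a unique $m_2^*(q)$ with $\lambda_1(q,m_2^*(q))=0$, and $p_*-\mu<m_2^*(q)\le p_*-\frac{m_1\mu}{m_1+\sigma}$; both bounds are independent of $q$.

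For the stability dichotomy, if $0<m_2<m_2^*(q)$ then $\lambda_1(q,m_2)>\lambda_1(q,m_2^*(q))=0$, so $\mathcal L$ has a positive eigenvalue and $(0,0)$ is unstable (for $m_2<p_*-\mu$ this is already Proposition \ref{eigenp}(\ref{s5})). If $m_2>m_2^*(q)$ then $\lambda_1(q,m_2)<0$; moreover $m_2>m_2^*(q)>p_*-\mu$ gives $p_*-m_2-\mu<0$, so the continuous-spectrum interval $\bigl[\min_x g(x,0)-m_2-\mu,\;p_*-m_2-\mu\bigr]$ from Proposition \ref{eigenp}(\ref{s2}) also lies in $\{\operatorname{Re}\lambda<0\}$; the concluding step in the proof of Proposition \ref{eigenp}(\ref{s}) then gives $\sigma(\mathcal L)\subset\{\operatorname{Re}\lambda<0\}$, i.e. $(0,0)$ is linearly stable. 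Finally, for the monotonicity of $m_2^*$ I avoid differentiating: if $q_1<q_2$ and $b_d>1/2$, Proposition \ref{eigenp0}(\ref{qde}) gives $0=\lambda_1(q_2,m_2^*(q_2))<\lambda_1(q_1,m_2^*(q_2))$, hence $\lambda_1(q_1,m_2^*(q_2))>0=\lambda_1(q_1,m_2^*(q_1))$, and strict monotonicity of $\lambda_1(q_1,\cdot)$ forces $m_2^*(q_2)<m_2^*(q_1)$.

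The main subtlety is that the interval containing $m_2^*(q)$ has two genuinely different endpoints because stability of $(0,0)$ is controlled by two distinct spectral ingredients — the sign of the principal eigenvalue $\lambda_1$ and the location of the continuous spectrum $[\min_x g(x,0)-m_2-\mu,\,p_*-m_2-\mu]$ — and one must track both; the lower endpoint $p_*-\mu$ is where the continuous spectrum crosses $0$, while the upper endpoint $p_*-\frac{m_1\mu}{m_1+\sigma}$ comes from the quadratic-form criterion for $\lambda_1<0$ in Proposition \ref{eigenp}(\ref{s}). Getting the \emph{strict} upper bound $m_2^*(q)<p_*-\frac{m_1\mu}{m_1+\sigma}$, rather than $\le$, is the one remaining loose end: I would handle it by revisiting the square-completion of $E_1$ in \eqref{eigenvalueafter} at the endpoint value of $m_2$ and checking that equality $E_1(\psi_1,\psi_2)=0$ there is incompatible with the positivity of the principal eigenfunction.
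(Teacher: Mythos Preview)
Your proof is correct and follows essentially the same route as the paper: use Proposition~\ref{eigenp} parts~\ref{s5} and~\ref{s} to get the sign of $\lambda_1$ near the two endpoints, Proposition~\ref{eigenp0}(\ref{dm2}) for strict monotonicity in $m_2$, the intermediate value theorem for existence and uniqueness of $m_2^*(q)$, and Proposition~\ref{eigenp0}(\ref{qde}) for monotonicity in $q$. You are in fact more careful than the paper in flagging the strict upper bound $m_2^*(q)<p_*-\frac{m_1\mu}{m_1+\sigma}$ as a loose end; the paper simply asserts the open interval without justifying strictness at the right endpoint, and your proposed fix via the square-completion in $E_1$ is the right way to close it.
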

\begin{proof} Again we denote $p_*=\ds\max_{x\in[0,L]}f_v(x,0)$.
 From part \ref{s} of Proposition \ref{eigenp}, when $m_2>p_*-\ds\frac{m_1\mu}{m_1+\sigma}$, the zero steady state is linearly stable, and $\la_1(q,m_2)<0$. From the proof of part 3 of Proposition \ref{eigenp}, when $m_2<p_*-\mu$, $\la_1(q,m_2)>0$. From part \ref{dm2} of Proposition \ref{eigenp0}, $\lambda_1(q,m_2)$ is strictly decreasing with respect to $m_2$. Therefore there exists a unique $m^*_2(q)\in \left(p_*-\mu,p_*-\ds\frac{m_1\mu}{m_1+\sigma}\right)$ such that $\la_1(q,m^*_2(q))=0$. For all $m_2>p_*-\mu$, the continuous spectrum is always in the negative half plane. Hence the zero steady state $(0,0)$ is unstable when $0<m_2<m_2^*(q)$ and it is linearly stable when $m_2>m_2^*(q)$. From part \ref{qde} of Proposition \ref{eigenp0}, when $b_d>\ds\frac{1}{2}$, $\lambda_1(q,m_2)$ is strictly decreasing in $q$. Hence $m_2^*(q)$ is strictly decreasing in $q$ if $b_d>\ds\frac{1}{2}$.
\end{proof}
Note that in \cite{huang2016}, it is shown that the sign of the principal eigenvalue $\la_1$ or $R_0-1$ (where $R_0$ is the basic reproduction number) is the indicator of persistence or extinction for \eqref{1} in the logistic case. Hence for the logistic case considered in \cite{huang2016}, Corollary \ref{last} provides a more specific criterion of persistence or extinction for \eqref{1} in terms of advection rate $q$ and benthic population mortality rate $m_2$.

\section{Persistence/Extinction Dynamics}
In this section, we consider the dynamical behavior of system \eqref{1} with the strong Allee effect growth rate in the bethic population. Assume that $(u(x),v(x))$ is a positive  solution of system \eqref{1ss}, then from the second equation of system \eqref{1ss}, we have
\begin{equation}\label{1.1}
  u(x)=\ds\frac{A_b(x)v(x)}{A_d(x)\sigma}(\mu+m_2-g(x,v(x))),
\end{equation}
which implies that $g(x,v(x))< m_2+\mu$ for every $x\in[0,L]$. This implies that the transfer rate $\mu$ from benthos to drift zone needs to be large to ensure the existence of positive steady state solutions. Notice that we consider the following three possible scenarios: (see Fig. \ref{hh})
\begin{equation*}
 \begin{split}
 (H1)&\qquad \mu>(g_{max}-m_2)\ds\frac{\sigma+m_1}{m_1}:=\mu_1,\\
 (H2)&\qquad \mu_3:=g_{min}-m_2<\mu<(g_{min}-m_2)\ds\frac{\sigma+m_1}{m_1}:=\mu_2,\\
 (H3)&\qquad \mu<g_{min}-m_2:=\mu_3.
 \end{split}
\end{equation*}

\begin{figure}[ht]
\centering
\includegraphics[width=0.6\textwidth]{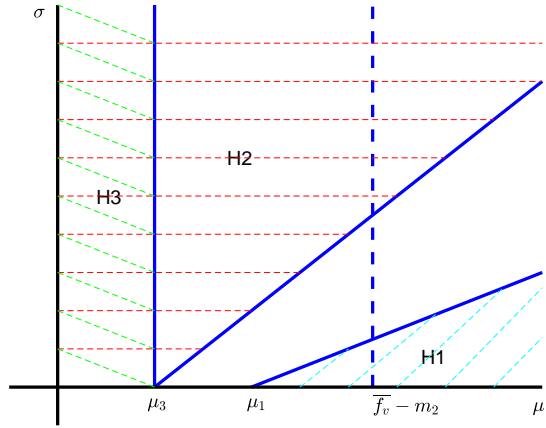}\\
\caption{Parameter regions on $\mu-\sigma$ plane satisfying (H1), (H2) or (H3).}
\label{hh}
\end{figure}

In the following, we will discuss the dynamical behavior of system \eqref{1} under $(H1)$, $(H2)$ or $(H3)$, respectively. When $(H1)$ is satisfied, we  show in subsection \ref{e} that system \eqref{1} has no positive steady state solutions, which indicates a global extinction of the population for all initial conditions. And in subsection \ref{p}, under the condition $(H3)$, we  prove the existence of multiple positive steady state solutions for any diffusion coefficient $d$ and advection rate $q$, and the persistence of population for all large initial conditions. Finally under the condition $(H2)$, which is in between $(H1)$ and $(H3)$, we show the existence of multiple positive steady state solutions in closed environment when the advection rate $q$ is small. This indicates that the extinction/persistence of benthic-drift population in the intermediate parameter range $(H2)$ is more complicated, and it depends on the movement parameters $q,d$ and also boundary conditions. Note that when $g(x,v)\equiv g(v)$ (spatially homogeneous), the conditions $(H1)$, $(H2)$ and $(H3)$ completely partitions the positive parameter quadrant $\{(\mu,\sigma):\mu,\sigma>0\}$, but there is a gap between $(H1)$ and $(H2)$ when $g(x,v)$ is spatially heterogeneous.  

\subsection{Extinction}\label{e}

First we prove the following nonexistence results of steady state solution $(u(x),v(x))$ to \eqref{1}.
\begin{theorem}\label{pro:4.3}
Suppose $g(x,u)$ satisfies (g1)-(g3) and (g4c), $d>0$ and $q,b_u,b_d\ge 0$.
\begin{enumerate}
    \item If $(H1)$ is satisfied, then the system \eqref{1} has no positive steady state solutions.
    \item The system \eqref{1} has no positive steady state solutions satisfying $v(x)<h(x)$ for all $x\in [0,L]$, where $h(x)$ is defined in (g4c).
\end{enumerate}
\end{theorem}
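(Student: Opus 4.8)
The plan is to prove both parts by contradiction, running a maximum‑principle argument on the single scalar equation obtained by eliminating the coupling, just as in the proof of Proposition~\ref{pro:bound}. First I would pass to the variables $(w,z)$ via $u=e^{\alpha x}w$, $v=e^{\alpha x}z$ with $\alpha=q/d$, so that \eqref{1ss} becomes \eqref{2ss}; multiplying the second equation of \eqref{2ss} by $A_b(x)/A_d(x)$ and adding the first gives the scalar identity \eqref{3.9}, namely $dw_{xx}+qw_x-m_1w+\frac{A_b(x)}{A_d(x)}z\big(g(x,e^{\alpha x}z)-m_2\big)=0$ on $[0,L]$. The point of working with $w$ rather than $u$ is that its boundary conditions read $d w_x(0)=b_u q w(0)\ge 0$ and $d w_x(L)=-b_d q w(L)\le 0$. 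Hence, if $x_0\in[0,L]$ is a maximum point of $w$ (a classical, hence $C^2$, solution), then $w_x(x_0)=0$ and $w_{xx}(x_0)\le 0$ in every case: in the interior this is standard, and at $x_0=0$ (resp.\ $x_0=L$) the sign of $w_x$ forced by the endpoint maximum is opposite to the one forced by the boundary condition, so $w_x(x_0)=0$, after which a second‑order Taylor expansion yields $w_{xx}(x_0)\le 0$. Since a positive steady state has $w>0$ and $z>0$ on $[0,L]$, we also have $w(x_0)>0$ and $z(x_0)>0$.

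For part 2, suppose $(u,v)$ is a positive steady state with $v(x)<h(x)$ on $[0,L]$. By (g4c) and $h(x)<s(x)$, the map $g(x,\cdot)$ is increasing on $[0,h(x)]$ with $g(x,h(x))=0$, so $g(x,e^{\alpha x}z(x))=g(x,v(x))<0$ for every $x$, whence $z\big(g(x,e^{\alpha x}z)-m_2\big)<0$ on $[0,L]$ (using $z>0$ and $m_2\ge 0$). Evaluating \eqref{3.9} at the maximum point $x_0$ of $w$ gives $dw_{xx}(x_0)-m_1w(x_0)=-\frac{A_b(x_0)}{A_d(x_0)}z(x_0)\big(g(x_0,e^{\alpha x_0}z(x_0))-m_2\big)>0$, which contradicts $dw_{xx}(x_0)-m_1w(x_0)\le 0$ (recall $w_{xx}(x_0)\le 0$, $w(x_0)>0$, $m_1\ge 0$). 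Note this part uses only $m_1\ge 0$.

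For part 1, evaluating \eqref{3.9} at $x_0$ gives $\frac{A_b(x_0)}{A_d(x_0)}z(x_0)\big(g(x_0,e^{\alpha x_0}z(x_0))-m_2\big)=m_1w(x_0)-dw_{xx}(x_0)\ge m_1w(x_0)>0$, where we use $m_1>0$ (built into the definition of $\mu_1$). In particular $g(x_0,e^{\alpha x_0}z(x_0))>m_2$, hence $g_{max}>m_2$, and since $g(x_0,e^{\alpha x_0}z(x_0))\le g_{max}$ we get $\frac{A_b(x_0)}{A_d(x_0)}z(x_0)(g_{max}-m_2)\ge m_1w(x_0)$. Next, the second equation of \eqref{2ss} at $x_0$ yields $z(x_0)=\frac{A_d(x_0)\sigma w(x_0)}{A_b(x_0)\big(m_2+\mu-g(x_0,e^{\alpha x_0}z(x_0))\big)}$, the denominator being positive since every positive steady state satisfies $g(x,v(x))<m_2+\mu$ by \eqref{1.1}. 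Substituting, cancelling $w(x_0)>0$, and using $g(x_0,e^{\alpha x_0}z(x_0))\le g_{max}$ once more gives $\sigma(g_{max}-m_2)\ge m_1\big(m_2+\mu-g_{max}\big)$, which rearranges to $(\sigma+m_1)(g_{max}-m_2)\ge m_1\mu$, i.e.\ $\mu\le\mu_1$, contradicting $(H1)$.

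I expect the only genuinely delicate point to be the boundary‑maximum case in the first paragraph: one must make sure that, after the exponential change of variables, the flux form of the boundary conditions makes a maximum of $w$ attained at an endpoint behave like an interior maximum, so that $w_x=0$ and $w_{xx}\le 0$ there. Once that is in place, both statements follow by reading off \eqref{3.9} at the maximum point together with the algebraic relation between $u$, $v$ and $g$ coming from the second steady‑state equation.
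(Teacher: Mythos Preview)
Your proof is correct, but it follows a different strategy from the paper. The paper's argument works directly with the untransformed variables $(u,v)$: substituting \eqref{1.1} (i.e.\ solving the second steady-state equation for $u$ in terms of $v$) into the first equation of \eqref{1ss} yields a single equation \eqref{3ss} for $u$ alone, and then simply integrating over $[0,L]$ gives
\[
-b_dqu(L)-b_uqu(0)+\int_0^L\frac{A_b(x)}{A_d(x)}\,v(x)\Big[\mu-\frac{\sigma+m_1}{\sigma}\big(\mu+m_2-g(x,v(x))\big)\Big]\,dx=0.
\]
Under $(H1)$ the bracket is strictly negative for every $x$ (since $g\le g_{max}$), while in part~2 the hypothesis $v(x)<h(x)$ forces $g(x,v(x))<0$ and the bracket is again negative; either way, together with $b_u,b_d\ge 0$ and $v>0$, the identity is impossible. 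Your pointwise maximum-principle argument, patterned on the proof of Proposition~\ref{pro:bound}, reaches the same contradiction via the value of \eqref{3.9} at a single point rather than through an integral. The integration route is shorter and sidesteps the need to analyze the endpoint-maximum case (which you in fact handle correctly, and arguably more carefully than the paper does in its own proof of Proposition~\ref{pro:bound}); on the other hand, your approach ties in naturally with the \emph{a~priori} estimates already established and extracts slightly sharper pointwise information at $x_0$.
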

\begin{proof}
1. Suppose that $(u(x),v(x))$ is a positive solution of \eqref{1ss}.  Substituting \eqref{1.1} into the first equation of \eqref{1ss}, we obtain
\begin{equation}\label{3ss}
  \begin{cases}
  d u_{xx}-q u_x+\ds\frac{A_b(x)}{A_d(x)} v\left[\mu-\frac{\sigma+m_1}{\sigma}(m_2+\mu-g(x,v))\right]=0, & 0<x<L,\\
  du_{x}(0)-qu(0)=b_u qu(0),\\
  du_{x}(L)-qu(L)=-b_d qu(L).
  \end{cases}
\end{equation}
Integrating \eqref{3ss}, we get
\begin{equation}\label{h}
-b_d qu(L)-b_u qu(0)+\int_0^L\ds\frac{A_b(x)}{A_d(x)}v(x)\left[\mu -\ds\frac{\sigma+m_1}{\sigma}(\mu+m_2-g(x,v(x)))\right] dx=0.
\end{equation}
Note that $(H1)$ implies that the function
\begin{equation}\label{si}
  \tilde{v}(x)=\mu -\ds\frac{\sigma+m_1}{\sigma}(\mu+m_2-g(x,v(x)))
\end{equation}
is strictly negative. Since $v(x)>0$ and $b_u,b_d\ge 0$, we reach a contradiction with \eqref{h}. Hence there is no positive steady state solutions of \eqref{1} when $(H1)$ is satisfied.

2. Suppose that $(u(x),v(x))$ is a positive solution of \eqref{1ss}. If $0<v(x)<h(x)$, then $g(x,v(x))<0$ for all $x\in [0,L]$ and consequently $\tilde{v}(x)<\ds \mu -\ds\frac{\sigma+m_1}{\sigma}(\mu+m_2)<0$. This again leads to a contradiction.  Therefore, there is no positive solution $(u(x),v(x))$ of \eqref{1ss} satisfying $v(x)<h(x)$ for all $x\in [0,L]$.
\end{proof}

A direct corollary of Theorem \ref{pro:4.3} and Theorem \ref{ly} is the global extinction of population when the transfer rate $\mu$ of the benthic population to the drift population is too large.

\begin{corollary}\label{pp}
Suppose $g(x,u)$ satisfies (g1)-(g3) and (g4c),  $d>0$ and $q,b_u,b_d\ge 0$. If
\begin{equation}\label{psp}
    \mu>\max \left\{(g_{max}-m_2)\ds\frac{\sigma+m_1}{m_1},\overline{f_v}-m_2\right\},
\end{equation}
then for any initial condition $(u_0(x),v_0(x))\ge 0$, the solution $(u(x,t),v(x,t)$ of \eqref{1} satisfies $\ds\lim_{t\rightarrow +\infty}u(x,t)=0$ and $\ds\lim_{t\rightarrow +\infty}v(x,t)=0$.
\end{corollary}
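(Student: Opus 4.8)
The plan is to read the hypothesis \eqref{psp} as simultaneously enforcing the two structural conditions that have already been exploited in the paper, and then to combine Theorem \ref{pro:4.3} with Theorem \ref{ly}. First I would note that $\mu>\overline{f_v}-m_2$ is merely a restatement of $\overline{f_v}<m_2+\mu$, i.e. condition \eqref{h1}, so that Theorem \ref{ly} is available; and that $\mu>(g_{max}-m_2)(\sigma+m_1)/m_1$ is exactly hypothesis $(H1)$, so that the first part of Theorem \ref{pro:4.3} applies and \eqref{1} has no positive steady state solution.

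The key step is to identify the set $S$ of non-negative steady states appearing in Theorem \ref{ly} and to show $S=\{(0,0)\}$. Of course $(0,0)\in S$. Conversely, let $(u,v)$ be a non-negative solution of \eqref{1ss}. If $v\equiv 0$, the second equation of \eqref{1ss} reduces to $(A_d/A_b)\sigma u\equiv 0$, hence $u\equiv 0$ since $\sigma>0$; symmetrically $u\equiv 0$ forces $v\equiv 0$ through the first equation since $\mu>0$. If instead $v\not\equiv 0$, I would use that \eqref{1.1}, and hence the integral identity \eqref{h}, hold for \emph{any} solution of \eqref{1ss}: under $(H1)$ the factor $\tilde{v}(x)$ of \eqref{si} — which is precisely the bracketed expression in the integrand of \eqref{h} — is strictly negative (here one only needs $g(x,v(x))\le g_{max}$, true for any steady state), while $v\ge 0$ is continuous and positive on an open set and $b_u q u(0),\,b_d q u(L)\ge 0$, so the left-hand side of \eqref{h} is strictly negative, a contradiction. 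Thus $v\not\equiv 0$ is impossible and $S=\{(0,0)\}$. (Alternatively one may first upgrade any non-trivial non-negative steady state to a strictly positive one, by applying the strong maximum principle and Hopf's lemma to $du_{xx}-qu_x-(\sigma+m_1)u=-(A_b/A_d)\mu v\le 0$ and then using \eqref{1.1} together with $g(x,v)<m_2+\mu$, and then invoke the first part of Theorem \ref{pro:4.3} directly.)

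With $S=\{(0,0)\}$ in hand, for initial data with $u_0\not\equiv 0$ and $v_0\not\equiv 0$ Theorem \ref{ly} gives $\emptyset\ne\omega((u_0,v_0))\subset S=\{(0,0)\}$, the non-emptiness of the $\omega$-limit set coming from the global attractor that exists under \eqref{h1} together with point dissipativity; hence $\omega((u_0,v_0))=\{(0,0)\}$, i.e. $u(\cdot,t)\to 0$ and $v(\cdot,t)\to 0$ as $t\to\infty$. It remains to dispose of the remaining data: the case $u_0\equiv v_0\equiv 0$ is trivial, and if exactly one of $u_0,v_0$ vanishes identically then the strong positivity of the drift semigroup $T_1(t)$ together with the positive coupling constants $\sigma,\mu>0$ makes both $u(\cdot,t_0)$ and $v(\cdot,t_0)$ strictly positive for every $t_0>0$, so one concludes by restarting the previous argument from time $t_0$. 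The only point genuinely needing care is the characterization $S=\{(0,0)\}$ (more precisely, that no non-trivial non-negative steady state slips past Theorem \ref{pro:4.3}); the degeneracy of the $v$-equation does not intervene here, since the precompactness of orbits needed for the $\omega$-limit analysis is already packaged into Theorem \ref{ly}.
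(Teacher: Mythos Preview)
Your proposal is correct and follows essentially the same route as the paper: read \eqref{psp} as $(H1)$ plus \eqref{h1}, invoke Theorem \ref{pro:4.3} to kill positive steady states, and invoke Theorem \ref{ly} to force convergence to the remaining steady state $(0,0)$. You are in fact more careful than the paper on two points it glosses over --- ruling out non-trivial \emph{non-negative} (as opposed to strictly positive) steady states, and handling initial data with one component identically zero, which Theorem \ref{ly} formally excludes.
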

\begin{proof}
The condition \eqref{psp} implies both $(H1)$ and \eqref{h1}. Then from  Theorem \ref{ly}, the solution converges to a nonnegative steady state as $t\to\infty$, and from Theorem \ref{pro:4.3}, the trivial steady state is the only nonnegative steady state. Therefore $\ds\lim_{t\rightarrow +\infty}u(x,t)=0$ and $\ds\lim_{t\rightarrow +\infty}v(x,t)=0$.
\end{proof}
The global extinction shown in  Corollary \ref{pp} indicates that when the the transfer rate $\mu$ of the benthic population to the drift population is too high, the benthic population becomes too low and the Allee effect drives it to extinction when the benthic population is below the threshold level.
We conjecture that the global extinction described in Corollary \ref{pp} holds when $(H1)$ is satisfied, and the condition \eqref{h1} is not necessary. But it is not known  whether the solution flow  has sufficient compactness without \eqref{h1}.

From part 5 of  Proposition \ref{eigenp}, we know that the zero steady state solution is locally asymptotically stable. In the following proposition, we describe the basin of attraction of the zero steady state solution of system \eqref{1} for different boundary conditions.

\begin{proposition}\label{0basin}
Suppose $g(x,u)$ satisfies (g1)-(g3) and (g4c),  $d>0$ and $q,b_u,b_d\ge 0$. Assume that the cross-section $A_b(x)$ and $A_d(x)$ are homogeneous. Let $(u(x,t),v(x,t))$ be the solution of \eqref{1} with initial condition $(u_0(x),v_0(x))$. Then
\begin{enumerate}
\item When $b_u\geq 0$ and $b_d\geq 0$, if $0<u_0(x)<\theta_1e^{\alpha x}\ds\min_{y\in [0,L]}e^{-\alpha y}h(y)$ and $0<v_0(x)<\ds e^{\alpha x}\min_{y\in [0,L]}e^{-\alpha y}h(y)$, then $\ds\lim_{t\rightarrow +\infty}u(x,t)=0$ and $\ds\lim_{t\rightarrow +\infty}v(x,t)=0$;
\item When $b_u\geq 0$ and $b_d\geq 1$, if $0<u_0(x)<\theta_1\ds\min_{y\in [0,L]} h(y)$ and $0<v_0(x)<\ds\min_{y\in [0,L]} h(y)$, then $\ds\lim_{t\rightarrow +\infty}u(x,t)=0$ and $\ds\lim_{t\rightarrow +\infty}v(x,t)=0$.
\end{enumerate}
\end{proposition}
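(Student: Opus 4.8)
\emph{Proof proposal.} Write $\al=q/d$ and let $a=A_b/A_d$ (a constant, since the cross-sections are homogeneous), so that $\theta_1=\ds\frac{a\mu}{\sigma+m_1}$ and the coupling coefficients in \eqref{1} are $\frac{A_b}{A_d}\mu=a\mu$ and $\frac{A_d}{A_b}\sigma=\sigma/a$. In both cases the strategy is the same: exhibit an explicit \emph{constant} ordered pair $(0,0)\le(\overline u,\overline v)$ which is an upper solution lying \emph{strictly} below the Allee threshold $h(\cdot)$, run the solution started from $(\overline u,\overline v)$ down monotonically to the largest steady state it dominates, show that steady state is $(0,0)$, and squeeze the given solution in between. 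The strict sub-threshold property is what will let us invoke Theorem~\ref{pro:4.3}(2); it also disposes of the degenerate case $m_1=m_2=0$, in which the un-shrunk constant can itself be a steady state.

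\emph{Part 1.} I would pass to the equivalent cooperative system \eqref{wz}, in which $w=e^{-\al x}u$, $z=e^{-\al x}v$ and all off-diagonal couplings ($a\mu z$ and $\sigma w/a$) are nonnegative. Put $m^*=\ds\min_{y\in[0,L]}e^{-\al y}h(y)>0$; the hypothesis on the initial data says exactly $0<w_0(x)<\theta_1 m^*$ and $0<z_0(x)<m^*$ on $[0,L]$, so by continuity on the compact interval there is $\delta\in(0,1)$ with $w_0\le(1-\delta)\theta_1 m^*$ and $z_0\le(1-\delta)m^*$. Take the constant pair $(\overline w,\overline z)=\bigl((1-\delta)\theta_1 m^*,\,(1-\delta)m^*\bigr)$. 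The interior upper-solution inequalities reduce to: (i) $a\mu\overline z-(\sigma+m_1)\overline w=0$, which holds with equality by the definition of $\theta_1$; and (ii) for the $z$-equation,
\begin{equation*}
\overline z\,g(x,e^{\al x}\overline z)-(m_2+\mu)\overline z+\ds\frac{\sigma}{a}\overline w=\overline z\left[g(x,e^{\al x}\overline z)-\Bigl(m_2+\mu-\ds\frac{\sigma\mu}{\sigma+m_1}\Bigr)\right]<0,
\end{equation*}
because $e^{\al x}\overline z<e^{\al x}m^*\le h(x)$ forces $g(x,e^{\al x}\overline z)<0$, while $m_2+\mu-\tfrac{\sigma\mu}{\sigma+m_1}=\tfrac{m_1\mu+m_2(\sigma+m_1)}{\sigma+m_1}\ge 0$. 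The boundary conditions of \eqref{wz} are of Robin/hostile type with nonnegative coefficients, hence every nonnegative constant satisfies them. So $(\overline w,\overline z)$ is an upper solution and $(0,0)$ a lower solution; by the monotone iteration for quasimonotone systems used in the proof of Theorem~\ref{ge} (via \cite[Theorem 4.1]{pao1996JMAA}), the solution of \eqref{wz} from $(\overline w,\overline z)$ is non-increasing in $t$ and converges to the maximal steady state $(w_{\max},z_{\max})$ with $0\le w_{\max}\le\overline w$, $0\le z_{\max}\le\overline z$.

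Now the shift by $(1-\delta)$ pays off: the corresponding steady state $(u^*,v^*)=(e^{\al x}w_{\max},e^{\al x}z_{\max})$ of \eqref{1} satisfies $v^*(x)\le(1-\delta)e^{\al x}m^*<e^{\al x}m^*\le h(x)$ for \emph{all} $x\in[0,L]$, and by Theorem~\ref{pro:4.3}(2) no positive steady state can satisfy $v^*<h$ on all of $[0,L]$; hence $(w_{\max},z_{\max})=(0,0)$. Comparison then gives $0\le w(x,t)\le w^{\mathrm{up}}(x,t)\to 0$ and $0\le z(x,t)\le z^{\mathrm{up}}(x,t)\to 0$, so $u(x,t)\to 0$ and $v(x,t)\to 0$. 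For Part~2 one repeats the argument directly on \eqref{1} with $m^*=\ds\min_{y\in[0,L]}h(y)$ and the constant pair $\bigl((1-\delta)\theta_1 m^*,(1-\delta)m^*\bigr)$: now $\overline v=(1-\delta)m^*<h(x)$ directly gives $g(x,\overline v)<0$, the interior inequalities are unchanged, and the only new point is the downstream boundary inequality $d\overline u_x(L)-q(1-b_d)\overline u(L)=q(b_d-1)\overline u(L)\ge 0$, which needs precisely $b_d\ge 1$ (the upstream inequality $q(1+b_u)\overline u(0)\ge 0$ holds for all $b_u\ge 0$); the rest is verbatim.

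The main obstacle I anticipate is purely a matter of signs: correctly verifying the upper-solution inequalities — in particular the boundary terms, and pinning down $b_d\ge 1$ as the sharp condition in Part~2 (because the downstream flux boundary operator of the untransformed system is coercive exactly when $b_d\ge 1$) — together with the need to shift to a \emph{strictly} sub-threshold constant so that Theorem~\ref{pro:4.3}(2) actually forces the limiting steady state to be trivial rather than merely $\le h$.
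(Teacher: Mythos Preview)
Your proof is correct and follows essentially the same route as the paper's: pass to the transformed system \eqref{wz} for Part~1 (respectively, work directly with \eqref{1} for Part~2), use a constant pair $(\theta_1 m^*,m^*)$ as an upper solution together with $(0,0)$ as a lower solution, let the monotone iteration drive the solution started at the upper solution down to the maximal steady state in between, and invoke Theorem~\ref{pro:4.3}(2) to conclude that this maximal steady state is trivial. Your verification of the boundary inequalities, including the identification of $b_d\ge 1$ as the exact condition needed for the constant to be a supersolution of the untransformed system, matches the paper's.

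The one substantive difference is your $(1-\delta)$ shrinking. The paper takes the upper solution to be exactly $(\theta_1 m^*,m^*)$ and then asserts that the limiting steady state is trivial by Theorem~\ref{pro:4.3}(2); but as you observed, that theorem is stated for \emph{strict} sub-threshold steady states $v<h$, whereas the paper's upper bound only gives $v_{\max}\le h$ with possible equality at the minimizer of $e^{-\alpha y}h(y)$. Your use of the strict inequalities in the hypothesis to pass to $(1-\delta)m^*$ closes this gap cleanly and also rules out the degenerate possibility (when $m_1=m_2=0$ and $b_u=b_d=0$) that the constant pair itself could be a steady state. This is a genuine improvement in rigor over the paper's argument, at essentially no extra cost.
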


\begin{proof}
1. When $b_u\geq 0$ and $b_d\geq 0$, we set $\overline{w}_1=\theta_1\ds\min_{y\in [0,L]}e^{-\alpha y}h(y)$ and $\overline{z}_1=\ds\min_{y\in [0,L]}e^{-\alpha y}h(y)$. Then we have
\begin{equation*}
d(\overline{w}_1)_{xx}+q(\overline{w}_1)_x+\ds\frac{A_b}{A_d}\mu \overline{z}_1-\sigma \overline{w}_1-m_1\overline{w}_1=0,
\end{equation*}
and
\begin{equation*}
 \begin{split}
  &\overline{z}_1g(x,e^{\alpha x}\overline{z}_1)-m_2\overline{z}_1-\mu\overline{z}_1+\ds\frac{A_d\sigma}{A_b}\overline{w}_1\\
  \leq& \overline{z}_1g(x,e^{\alpha x}\overline{z}_1)-(m_2+\mu-\ds\frac{\sigma\mu}{\sigma+m_1})\overline{z}_1
  \leq \ds\min_{y\in [0,L]}e^{-\alpha y}h(y)g(x,e^{\alpha x}\ds\min_{y\in [0,L]}e^{-\alpha y}h(y))\\
  \leq& \ds\min_{y\in [0,L]}e^{-\alpha y}h(y)g(x,e^{\alpha x}e^{-\alpha x}h(x))
  =\ds\min_{y\in[0,L]}e^{-\alpha y}h(y)g(x,h(x))=0,
 \end{split}
\end{equation*}
and the boundary conditions $-d(\overline{w}_1)_x(0)+b_u q\overline{w}_1(0)\geq 0$, $d(\overline{w}_1)_x(L)+b_d q\overline{w}_1(L)\geq 0$. Thus, $(\overline{w}_1,\overline{z}_1)$ is an upper solution of system \eqref{2ss}. Let $(\underline{w}_1,\underline{z}_1)=(0,0)$ to be the lower solution of system \eqref{2ss}.
Now assume that $0<w_0(x)<\theta_1\ds\min_{y\in [0,L]}e^{-\alpha y}h(y)$ and $0\leq z_0(x)\leq\ds\min_{y\in[0,L]}e^{-\alpha y}h(y)$, and let $(w(x,t),v(x,t))$ be the solution of \eqref{wz}. Then there exist solutions $(\overline{W}_1(x,t),\overline{Z}_1(x,t))$ and $(\underline{W}_1(x,t),\underline{Z}_1(x,t))$ of system \eqref{wz},
\begin{equation}
 \underline{W}_1(x,t)\leq w(x,t)\leq\overline{W}_1(x,t),\;\;
  \underline{Z}_1(x,t)\leq z(x,t)\leq\overline{Z}_1(x,t),
\end{equation}
where $(\overline{W}_1(x,t),\overline{Z}_1(x,t))$ and $(\underline{W}_1(x,t),\underline{Z}_1(x,t))$ are the solutions of system \eqref{wz} with the initial condition $(\overline{W}_1(x,0),\overline{Z}_1(x,0))=(\overline{w}_1,\overline{z}_1)$ and $(\underline{W}_1(x,0),\underline{Z}_1(x,0))=(\underline{w}_1,\underline{z}_1)$. Moreover,
\begin{equation}
 \begin{split}
  &\ds\lim_{t\rightarrow +\infty}(\overline{W}_1(x,t),\overline{Z}_1(x,t))=(w_{max}(x),z_{max}(x)),\\
  &\ds\lim_{t\rightarrow +\infty}(\underline{W}_1(x,t),\underline{Z}_1(x,t))=(w_{min}(x),z_{min}(x)),\\
 \end{split}
\end{equation}
where $(w_{max}(x),z_{max}(x))$, $(w_{min}(x),z_{min}(x))$ are the maximal and minimal solution of \eqref{2ss} between $(0,0)$ and $(\overline{w}_1,\overline{z}_1)$. From Proposition \ref{pro:4.3}, there is no positive solution $(u(x),v(x))$ satisfying $v(x)<h(x)$ for all $x\in[0,L]$, hence $z_{min}(x)=z_{max}(x)=0$. And consequently $w_{min}(x)=w_{max}(x)=0$. This implies that $\ds\lim_{t\rightarrow +\infty}u(x,t)=0$ and $\ds\lim_{t\rightarrow +\infty}v(x,t)=0$;.

2. When $b_u\geq 0$ and $b_d\geq 1$, we apply the upper and lower solution method directly to \eqref{1}, and we choose $(\overline{u}_1(x),\overline{v}_1(x))=(\theta_1\ds\min_{y\in[0,L]}h(y),\ds\min_{y\in[0,L]}h(y))$ to be the upper solution and $(\underline{u}_1(x),\underline{v}_1(x))=(0,0)$ be the lower solution. We can follow the same argument in the above paragraph to reach the conclusion.
\end{proof}

\subsection{Persistence}\label{p}

In this section, we provide some criteria for the population persistence of system \eqref{1} with the strong Allee effect growth rate in the benthic population.  We first show some properties of the set of positive steady state solutions of \eqref{1ss} if there exists any.

\begin{proposition}\label{maxsol}
Suppose $g(x,u)$ satisfies (g1)-(g3), $d>0$, $q,b_u,b_d\ge 0$, and the cross-section $A_b(x)$ and $A_d(x)$ are homogeneous. If there exists a positive steady state solution of \eqref{1}, then there exists a maximal steady state solution $(u_{max}(x),v_{max}(x))$ such that for any positive steady state $(u(x),v(x))$ of system \eqref{1}, $(u_{max}(x),v_{max}(x))\geq (u(x),v(x))$.
\end{proposition}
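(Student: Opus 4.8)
The plan is to obtain $(u_{max},v_{max})$ by letting the parabolic flow of \eqref{1} descend from a well-chosen time-independent upper solution and then checking that the resulting steady state dominates every positive steady state. I work with the equivalent systems \eqref{wz} (time-dependent) and \eqref{2ss} (steady state) for $(w,z)=(e^{-\alpha x}u,e^{-\alpha x}v)$; this system is cooperative since the off-diagonal coupling coefficients $\frac{A_b(x)}{A_d(x)}\mu$ and $\frac{A_d(x)}{A_b(x)}\sigma$ are positive. Because the cross-sections are homogeneous, $A_b/A_d=:a$ is a positive constant and the quantities in \eqref{theta} become $\overline{\theta}_1=a\mu/(\sigma+m_1)$ and $\overline{\theta}_2=\sigma/(a(\mu+m_2))$, so that $\overline{\theta}_1\overline{\theta}_2=\mu\sigma/[(\sigma+m_1)(\mu+m_2)]\le 1$; this inequality is what makes the construction below work.

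First I would produce the upper solution. Put $\overline{z}=M$ and $\overline{w}=M\overline{\theta}_1$, with $M=\max_{y\in[0,L]}r(y)$. Substituting into \eqref{2ss}, the first equation holds with equality, and the right-hand side of the second equation equals $Mg(x,e^{\alpha x}M)-(\mu+m_2)M+(\sigma/a)M\overline{\theta}_1$; here $g(x,e^{\alpha x}M)\le 0$ by (g2) since $e^{\alpha x}M\ge M\ge r(x)$, and $(\sigma/a)\overline{\theta}_1=\sigma\mu/(\sigma+m_1)\le\mu\le\mu+m_2$, so that right-hand side is $\le 0$. The flux boundary conditions in \eqref{2ss} hold with the correct sign since $\overline{w}$ is a positive constant. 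Thus $(\overline{w},\overline{z})$ is a coupled upper solution and $(0,0)$ a lower solution of \eqref{wz}. Moreover, by Proposition \ref{pro:bound}, any positive steady state $(u,v)$ of \eqref{1} satisfies $u(x)\le e^{\alpha x}M\overline{\theta}_1$ and $v(x)\le e^{\alpha x}M\max\{1,\overline{\theta}_1\overline{\theta}_2\}=e^{\alpha x}M$, so the corresponding $(w,z)$ satisfies $(w,z)\le(\overline{w},\overline{z})$ on $[0,L]$.

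Next I would run the flow and pass to the limit. Let $(w^*,z^*)$ solve \eqref{wz} with initial data $(\overline{w},\overline{z})$; by Theorem \ref{ge} it is global, and being trapped between the lower and upper solutions it is bounded. Since $(\overline{w},\overline{z})$ is an upper solution of a quasimonotone nondecreasing system, the comparison and monotone-iteration results of \cite[Theorem 4.1]{pao1996JMAA} apply exactly as in the proof of Theorem \ref{ge}: $(w^*,z^*)$ is non-increasing in $t$ and converges pointwise to a limit $(w_{max},z_{max})$ solving \eqref{2ss} — the $w$-convergence being upgraded to a strong topology by parabolic regularity, and the limiting algebraic identity for $z_{max}$ obtained by passing to the limit in the integral form of the zero-diffusion $z$-equation. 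Set $u_{max}(x)=e^{\alpha x}w_{max}(x)$ and $v_{max}(x)=e^{\alpha x}z_{max}(x)$; this is a nonnegative steady state of \eqref{1}.

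Finally, maximality: let $(u,v)$ be any positive steady state of \eqref{1} and $(w,z)$ the associated steady state of \eqref{wz}. By the second paragraph $(w,z)\le(\overline{w},\overline{z})=(w^*(\cdot,0),z^*(\cdot,0))$, so the comparison principle for the cooperative system \eqref{wz} gives $(w,z)=(w(\cdot,t),z(\cdot,t))\le(w^*(\cdot,t),z^*(\cdot,t))$ for all $t\ge 0$; letting $t\to\infty$ yields $(w,z)\le(w_{max},z_{max})$, i.e. $(u,v)\le(u_{max},v_{max})$. Since by hypothesis a positive steady state exists, $(u_{max},v_{max})$ dominates it and is therefore itself positive, hence it is the required maximal positive steady state. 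The main obstacle is the noncompactness of the $z$-component (the $v$-equation has no diffusion), so convergence of the descending flow to a steady state cannot be read off from compactness of the semiflow and must instead come from monotonicity together with the a priori bounds of Proposition \ref{pro:bound}; the only other point needing care is that the clean constant upper solution used here is available precisely because homogeneity of the cross-sections forces $\overline{\theta}_1\overline{\theta}_2\le 1$.
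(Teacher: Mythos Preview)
Your proposal is correct and follows essentially the same route as the paper: construct a constant upper solution for the transformed system \eqref{2ss} that, by Proposition~\ref{pro:bound}, dominates every positive steady state, and then descend from it via the monotone scheme for the cooperative system to produce $(w_{max},z_{max})$. The only cosmetic differences are that the paper takes the slightly smaller constant $\overline{z}_2=\max_{y}e^{-\alpha y}r(y)$ in place of your $M$, and phrases the descent as the elliptic monotone iteration of \cite{amann1976,cvpao} rather than the parabolic flow; your explicit remark that homogeneity of the cross-sections forces $\overline{\theta}_1\overline{\theta}_2\le 1$ (hence $z\le M$) makes transparent exactly where that hypothesis is used.
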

\begin{proof}
We consider the equivalent steady state equation \eqref{2ss}.
Set
\begin{equation*}
    \overline{w}_2=\theta_1\ds\max_{y\in [0,L]}e^{-\alpha y}r(y), \;\;\; \overline{z}_2=\displaystyle\max_{y\in[0,L]}e^{-\alpha y}r(y).
\end{equation*}
From (g3), we have $g_v(x,v)\leq 0$ for $v\geq r(x)$. Hence
\begin{equation*}
 g(x,e^{\alpha x}\overline{z}_2)=g(x,e^{\alpha x}\max_{y\in[0,L]}e^{-\alpha y}r(y))\leq g(x,e^{\alpha x}e^{-\alpha x}r(x))= g(x,r(x))=0.
\end{equation*}
Substituting $(\overline{w}_2, \overline{z}_2)$ into system \eqref{2ss}, we have
\begin{equation}
\begin{cases}
 d(\overline{w}_2)_{xx}+q(\overline{w}_2)_x+\ds\frac{A_b}{A_d}\mu \overline{z}_2-\sigma \overline{w}_2-m_1\overline{w}_2=0, & \qquad 0<x<L,\\
 \overline{z}_2g(x,e^{\alpha x}\overline{z}_2)-m_2\overline{z}_2-\mu\overline{z}_2+\ds\frac{A_d\sigma}{A_b}\overline{w}_2\leq 0, & \qquad 0\leq x\leq L,\\
  -d\overline{w}_2'(0)+b_u q\overline{w}_2(0)\geq 0,\\
  d\overline{w}_2'(L)+b_d q\overline{w}_2(L)\geq 0.\\
\end{cases}
\end{equation}
Thus $(\overline{w}_2, \overline{z}_2)$ is an upper solution of system \eqref{2ss}. Moreover from Proposition \ref{pro:bound}, any positive steady state solution $(w(x),z(x))$ of \eqref{2ss} satisfies $(w(x),z(x))\le (\overline{w}_2, \overline{z}_2)$. Since $(u(x),v(x))$ is a positive steady state of \eqref{1},  we can set the lower solution of \eqref{2ss} to be $(\underline{w}_2(x),\underline{z}_2(x))=(e^{-\alpha x}u(x),e^{-\alpha x}v(x))$. Then there exists a maximal solution $(w_{max}(x),z_{max}(x))$ of \eqref{2ss} satisfying $(\underline{w}_2(x),\underline{z}_2(x))\leq (w_{max}(x),z_{max}(x))$. Since $(w_{max}(x),z_{max}(x))$ is obtained through the monotone iteration process (see \cite{amann1976,cvpao}) from the upper solution $(\overline{w}_2, \overline{z}_2(x))$ and any positive steady state solution $(w(x),z(x))$ of \eqref{2ss} satisfies $(w(x),z(x))\le (\overline{w}_2, \overline{z}_2(x))$, we conclude that $(w_{max}(x),z_{max}(x))$ is the maximal steady state solution of \eqref{2ss}.
\end{proof}

Next we show a monotonicity result for the maximal steady state solution.
\begin{proposition}\label{4.7}
Suppose $g(x,u)$ satisfies (g1)-(g3), $g(x,v)\equiv g(v)$, that is $g$ is spatially homogeneous and the cross-section $A_b(x)$ and $A_d(x)$ are also homogeneous. Then if $b_u\geq 0$ and $0\leq b_d\leq1$, the maximal steady state solution $(u_{max}(x),v_{max}(x))$ of equation \eqref{1} is strictly increasing in $[0,L]$.
\end{proposition}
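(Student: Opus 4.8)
The plan is to realize $(u_{max},v_{max})$ as a monotone limit of the parabolic flow of \eqref{1} issued from an initial datum that is already non-decreasing in $x$, to show that this spatial monotonicity is propagated by the flow, and then to bootstrap from ``non-decreasing'' to ``strictly increasing'' with a strong maximum principle. Since $g=g(v)$ and $A_b/A_d$ are constant, the change of variables $u=e^{\al x}w$, $v=e^{\al x}z$ (with $\al=q/d\ge 0$) carries the constant upper solution $(\overline w_2,\overline z_2)$ built in the proof of Proposition \ref{maxsol} to $(\overline u_0,\overline v_0)(x):=(e^{\al x}\overline w_2,\,e^{\al x}\overline z_2)$, which is an upper solution of \eqref{1} dominating every positive steady state (by Proposition \ref{pro:bound}) and which is non-decreasing in $x$ because $\al\ge 0$ and $\overline w_2,\overline z_2$ are constants. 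Running \eqref{1} from $(\overline u_0,\overline v_0)$ yields a solution $(U,V)(\cdot,t)$ that is non-increasing in $t$ and, as in the proof of Theorem \ref{ge}, converges to the maximal steady state $(u_{max},v_{max})$; this convergence is monotone, so the noncompactness hypothesis \eqref{h1} is not needed here.

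The core step is to show $U(\cdot,t)$ and $V(\cdot,t)$ stay non-decreasing in $x$. I would differentiate \eqref{1} in $x$ and set $P=U_x$, $S=V_x$. Because $g$ is spatially homogeneous, $(P,S)$ solves the linear system
\begin{equation*}
P_t=dP_{xx}-qP_x+\ds\frac{A_b}{A_d}\mu S-(\sigma+m_1)P,\qquad
S_t=\big(f_v(V)-m_2-\mu\big)S+\ds\frac{A_d}{A_b}\sigma P ,
\end{equation*}
which is \emph{cooperative}: the off-diagonal couplings $\frac{A_b}{A_d}\mu$ and $\frac{A_d}{A_b}\sigma$ are both positive. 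Differentiating the flux boundary conditions of \eqref{1} gives $dP(0,t)=(1+b_u)\,qU(0,t)$ and $dP(L,t)=(1-b_d)\,qU(L,t)$, and since $U>0$, $q\ge 0$, and --- this is precisely where the hypotheses are used --- $b_u\ge 0$ and $0\le b_d\le 1$, both data are non-negative. As $P(\cdot,0)=(\overline u_0)_x\ge 0$ and $S(\cdot,0)=(\overline v_0)_x\ge 0$, comparing $(P,S)$ with the subsolution $(0,0)$ via the comparison principle for cooperative systems gives $P\ge 0$ and $S\ge 0$ for all $t$; hence $U(\cdot,t),V(\cdot,t)$ are non-decreasing in $x$, and so are $u_{max},v_{max}$ in the limit.

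To upgrade this to strict monotonicity --- I expect this to require $q>0$, since for $q=0$ the homogeneous Neumann symmetry forces $(u_{max},v_{max})$ to be spatially constant --- I would put $p=(u_{max})_x\ge 0$, $s=(v_{max})_x\ge 0$. Differentiating the first equation of \eqref{1ss} gives $dp_{xx}-qp_x-(\sigma+m_1)p=-\frac{A_b}{A_d}\mu s\le 0$, so $p$ is a non-negative supersolution of an operator whose zeroth-order coefficient $-(\sigma+m_1)<0$ has the right sign; since $dp(0)=(1+b_u)qu_{max}(0)>0$, $p\not\equiv 0$, so by the strong maximum principle $p>0$ on $[0,L)$ and $u_{max}$ is strictly increasing on $[0,L]$. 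Differentiating the algebraic relation $\frac{A_d}{A_b}\sigma\,u_{max}=v_{max}\big(\mu+m_2-g(v_{max})\big)$ yields $\frac{A_d}{A_b}\sigma\,p=\big(\mu+m_2-f_v(v_{max})\big)\,s$, which with $p>0$, $s\ge 0$ forces $s>0$ (and incidentally $f_v(v_{max})<\mu+m_2$) on $[0,L)$, so $v_{max}$ is strictly increasing too. The main obstacle, in my view, is the second step: one must argue in the \emph{original} (untransformed) variables, where $g$ remains $x$-independent and the derivative system is genuinely cooperative --- in the $(w,z)$ variables the extra term $\al e^{\al x}z^2g'(e^{\al x}z)$ destroys this structure --- and one must deal with the nonstandard (flux-type) boundary conditions for $P$, which is exactly where $b_u\ge 0$ and $b_d\le 1$ enter.
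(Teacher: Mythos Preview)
Your argument is correct and takes a genuinely different route from the paper's. The paper does not run the parabolic flow at all: it works directly with the steady state and uses that the maximal solution is \emph{semistable}, so the linearized eigenvalue problem \eqref{1eigen} at $(u_{max},v_{max})$ has principal eigenvalue $\la_1\le 0$ with a strictly positive eigenfunction $(\phi_1,\phi_2)$. Differentiating \eqref{1ss} in $x$ and pairing the resulting equations against $(\phi_1,\phi_2)$ produces the identity
\[
\frac{A_d\sigma}{A_b}\phi_2\,(u_{max})_x=\Big(\frac{A_d\sigma}{A_b}\phi_1-\la_1\phi_2\Big)(v_{max})_x,
\]
so $(u_{max})_x$ and $(v_{max})_x$ share sign everywhere. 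One then argues by contradiction: if $(u_{max})_x$ changed sign, the boundary values $(u_{max})_x(0)=\al(1+b_u)u_{max}(0)>0$ and $(u_{max})_x(L)=\al(1-b_d)u_{max}(L)\ge 0$ force two consecutive zeros $x_1<x_2$ with $(u_{max})_x<0$ on $(x_1,x_2)$, and an integration by parts against $e^{-\al x}\phi_1$ over $[x_1,x_2]$ yields opposite signs on the two sides. This is a purely elliptic, eigenfunction-weighted Sturmian argument.

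Your approach trades the eigenvalue machinery for the order structure: you obtain $(u_{max},v_{max})$ as the decreasing limit of the flow from the monotone upper solution $(e^{\al x}\overline w_2,e^{\al x}\overline z_2)$, and propagate $U_x,V_x\ge 0$ via comparison in the cooperative linearized system for $(P,S)=(U_x,V_x)$ --- exactly where $b_u\ge 0$ and $b_d\le 1$ produce nonnegative Dirichlet data for $P$. This is more constructive and sidesteps both the principal-eigenvalue theory and the semistability input; the paper's argument, by contrast, works at the level of a single elliptic system and does not need to invoke the parabolic flow or a comparison principle for the degenerate PDE--ODE pair. Your final upgrade via the strong maximum principle for $p=(u_{max})_x$ and the differentiated algebraic relation $\frac{A_d}{A_b}\sigma p=(\mu+m_2-f_v(v_{max}))s$ is essentially the elliptic shadow of the paper's sign relation above. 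Both proofs implicitly need $q>0$ for the \emph{strict} inequality, which you correctly flagged; the one place to be careful in your write-up is justifying the comparison principle for $(P,S)$ with the nonstandard (prescribed, time-dependent) Dirichlet values for $P$ and no boundary condition for $S$ in this partially degenerate setting --- it holds, but deserves a sentence.
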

\begin{proof} We prove that $(u_{max})_x>0$ and $(v_{\max})_x>0$ for $x\in (0,L)$.  From \cite[Page 992]{sattinger}, the maximal solution $(u_{max},v_{max})$ is semistable, and the corresponding eigenvalue problem is \eqref{1eigen}. From Proposition \ref{eigenp}, the eigenvalue problem \eqref{1eigen} has a principal eigenvalue $\lambda_1\le 0$ with positive eigenfunction $\phi=(\phi_1,\phi_2)>0$.

We first prove that $u_{max}$ and $v_{max}$ always have the same sign for $x\in [0,L]$.
Differentiating \eqref{1ss} with respect to $x$, we have
\begin{gather}
  d(u_{max})_{xxx}-q(u_{max})_{xx}-(m_1+\sigma)(u_{max})_x+\ds\frac{A_b\mu}{A_d}(v_{max})_x=0,\label{4.4}\\
    f_v(v_{max})(v_{max})_x-(m_2+\mu)(v_{max})_x+\ds\frac{A_d\sigma}{A_b}(u_{max})_x=0,\label{4.42}
\end{gather}
where $f(v)=vg(v)$. Multiplying equation \eqref{4.42} by $\phi_2$ and multiplying the first equation in \eqref{1eigen} by $(v_{max})_x$, then subtracting, we obtain
\begin{gather}\label{m2}
\ds\frac{A_d\sigma}{A_b}\phi_2(u_{max})_x=\left(\frac{A_d\sigma}{A_b}\phi_1-\lambda_1 \phi_2\right)(v_{max})_x.
\end{gather}
Then $u_{max}$ and $v_{max}$ always have the same sign as $\phi_1>0$, $\phi_2>0$ and $\la_1\le 0$.

We prove the proposition by contradiction. Assuming that the maximal solution $(u_{max},v_{max})$ is not increasing for all $x\in [0,L]$. From boundary conditions in \eqref{1} and the condition $b_u\geq 0$, $0\leq b_d\leq1$, we have
\begin{equation*}
  \begin{split}
  (u_{max})_x(0)&=\alpha(b_u+1)u_{max}(0)>0,\\
  (u_{max})_x(L)&=\alpha(-b_d+1)u_{max}(L)\geq0.
  \end{split}
\end{equation*}
Then $(u_{max})_x(x)$ has at least two zero points in $(0,L]$. We choose the two smallest zero points $x_1, x_2\in (0,L]$ ($x_1<x_2$) such that $(u_{max})_x(x_1)=(u_{max})_x(x_2)=0$, $(u_{max})_x(x)<0$ on $(x_1,x_2)$. We claim that $(u_{max})_{xx}(x_1)<0$ and  $(u_{max})_{xx}(x_2)>0$.  Since $(u_{max})_x(x)<0$ on $(x_1,x_2)$, then $(u_{max})_{xx}(x_1)\le 0$ and $(u_{max})_{xx}(x_2)\ge 0$. If $(u_{max})_{xx}(x_1)= 0$, then from $(u_{max})_x(x_1)=0$, we conclude that $(u_{max})_x(x)\equiv 0$ near $x=x_1$ from the uniqueness of solution of ordinary differential equation, which contradicts with the assumption that $(u_{max})_x(x)<0$ on $(x_1,x_2)$. Hence we have $(u_{max})_{xx}(x_1)<0$, and similarly we can show that $(u_{max})_{xx}(x_2)>0$. Since $u_{max}$ and $v_{max}$ have the same sign, then we also have $(v_{max})_x(x)<0$  on $(x_1,x_2)$.

Multiplying equation \eqref{4.4} by $e^{-\alpha x}\phi_1$ and multiplying the first equation in \eqref{1eigen} by $e^{-\alpha x}(u_{max})_x$, then subtracting, we obtain
\begin{equation}\label{m1}
 \begin{split}
 &de^{-\alpha x}(-(u_{max})_x\phi_{1xx}+(u_{max})_{xxx}\phi_1)+qe^{-\alpha x}(-(u_{max})_{xx}\phi_1+(u_{max})_x\phi_{1x})\\
 &+e^{-\alpha x}\ds\frac{A_b\mu}{A_d}((v_{max})_x\phi_1-\phi_2(u_{max})_x)=-\lambda_1 e^{-\alpha x}\phi_1(u_{max})_x.
 \end{split}
\end{equation}
Then solving $(u_{max})_x\phi_2-(v_{max})_x\phi_1$ from \eqref{m2}, substituting  into \eqref{m1}, we have
\begin{equation}\label{4.13}
 \begin{split}
 &de^{-\alpha x}(-(u_{max})_x\phi_{1xx}+(u_{max})_{xxx}\phi_1)+qe^{-\alpha x}(-(u_{max})_{xx}\phi_1+(u_{max})_x\phi_{1x})\\
 =&-\lambda_1 e^{-\alpha x}\phi_1(u_{max})_x-\ds\frac{A_b^2\mu}{A_d^2\sigma}\lambda_1 e^{-\alpha x}\phi_2(v_{max})_x.\\
 \end{split}
\end{equation}
Integrating \eqref{4.13} on $[x_1,x_2]$, the right hand side becomes
\begin{equation}\label{4.14}
-\lambda_1\int_{x_1}^{x_2}e^{-\alpha x}\left[\phi_1(u_{max})_x+\ds\frac{A_b^2\mu}{A_d^2\sigma} e^{-\alpha x}\phi_2(v_{max})_x\right]dx< 0,
\end{equation}
as $(u_{max})_x(x)<0$ and $(v_{max})_x(x)<0$  on $(x_1,x_2)$, $\phi_1>0$, $\phi_2>0$ and $\la_1\le 0$.
On the other hand,  the left hand side becomes
\begin{equation}\label{4.15}
\begin{split}
&-d\int_{x_1}^{x_2}[(e^{-\alpha x}\phi_{1x})_x (u_{max})_x-(e^{-\alpha x}(u_{max})_{xx})_x\phi_1]dx\\
=& -de^{-\alpha x}(\phi_{1x}(u_{max})_x-\phi_1(u_{max})_{xx})\ds\Big|_{x_1}^{x_2}\\
=&-de^{-\alpha x_1}\phi_1(x_1)(u_{max})_{xx}(x_1)+de^{-\alpha x_2}\phi_1(x_2)(u_{max})_{xx}(x_2)>0,
\end{split}
\end{equation}
as $(u_{max})_{xx}(x_1)<0$ and $(u_{max})_{xx}(x_2)>0$.
So \eqref{4.14} and \eqref{4.15} are in contradiction. Thus the maximal solution $(u_{max}(x),v_{max}(x))$ of \eqref{1ss} is increasing for $x\in [0,L]$. Moreover the strong maximum principle implies that $(u,_{max}(x),v_{max}(x))$ must be strictly increasing.
\end{proof}

Next we assume that the condition $(H3)$ holds, i.e. $g_{min}> m_2+\mu$. Then for every $x\in [0,L]$, from (g3) and (g4c), there exist $v_1(x)$ and $v_2(x)$ such that $v_1(x)<v_2(x)$ and $g(x,v_i(x))=m_2+\mu$, $i=1,2$.
Moreover there also exist $v_3(x)$ and $v_4(x)$ such that  $v_3(x)<v_4(x)$, $g(x,v_i(x))=m_2+\ds\frac{m_1\mu}{\sigma+m_1}$, $i=3,4$. It is clear that $h(x)<v_3(x)<v_1(x)<v_2(x)<v_4(x)<r(x)$. When $(H2)$ is satisfied but $(H3)$ is not, $v_3(x)$ and $v_4(x)$ still exist but not $v_1(x)$ and $v_2(x)$. When $(H1)$ is satisfied, then all $v_i(x)$ ($i=1,2,3,4$) do not exist (see Fig. \ref{abc}).

\begin{figure}[ht]
\centering
\includegraphics[width=0.5\textwidth]{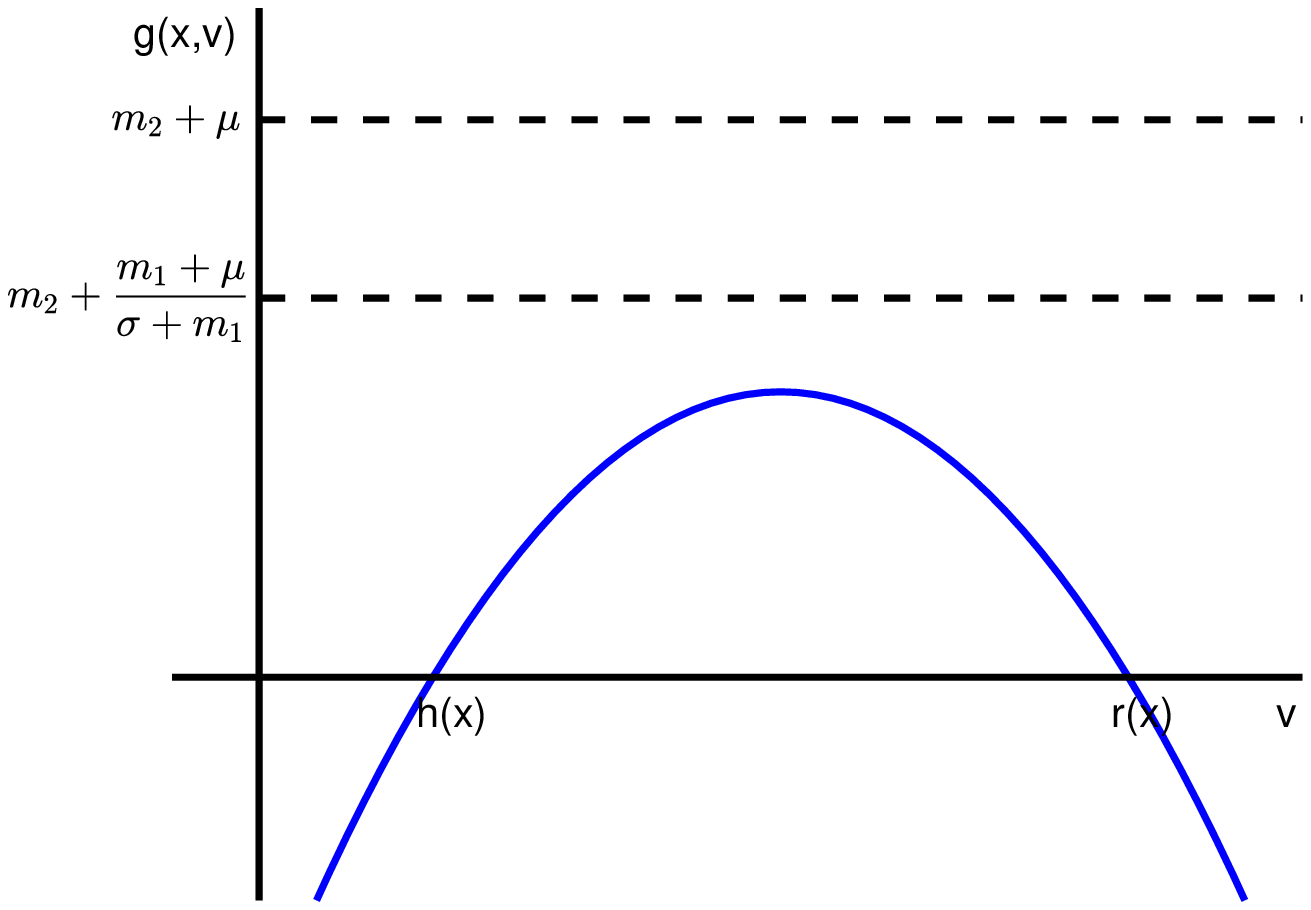}\includegraphics[width=0.5\textwidth]{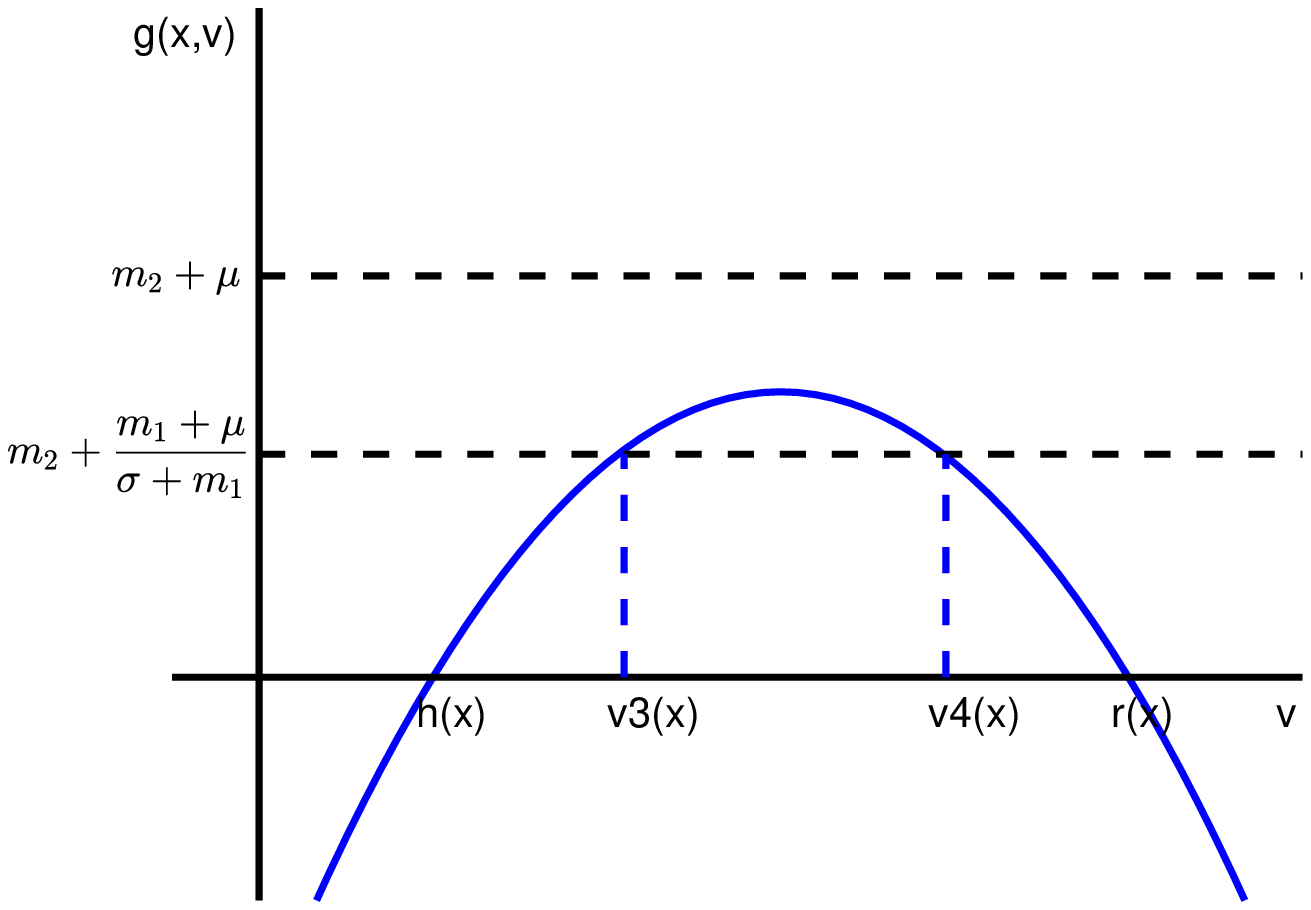}\\
\includegraphics[width=0.5\textwidth]{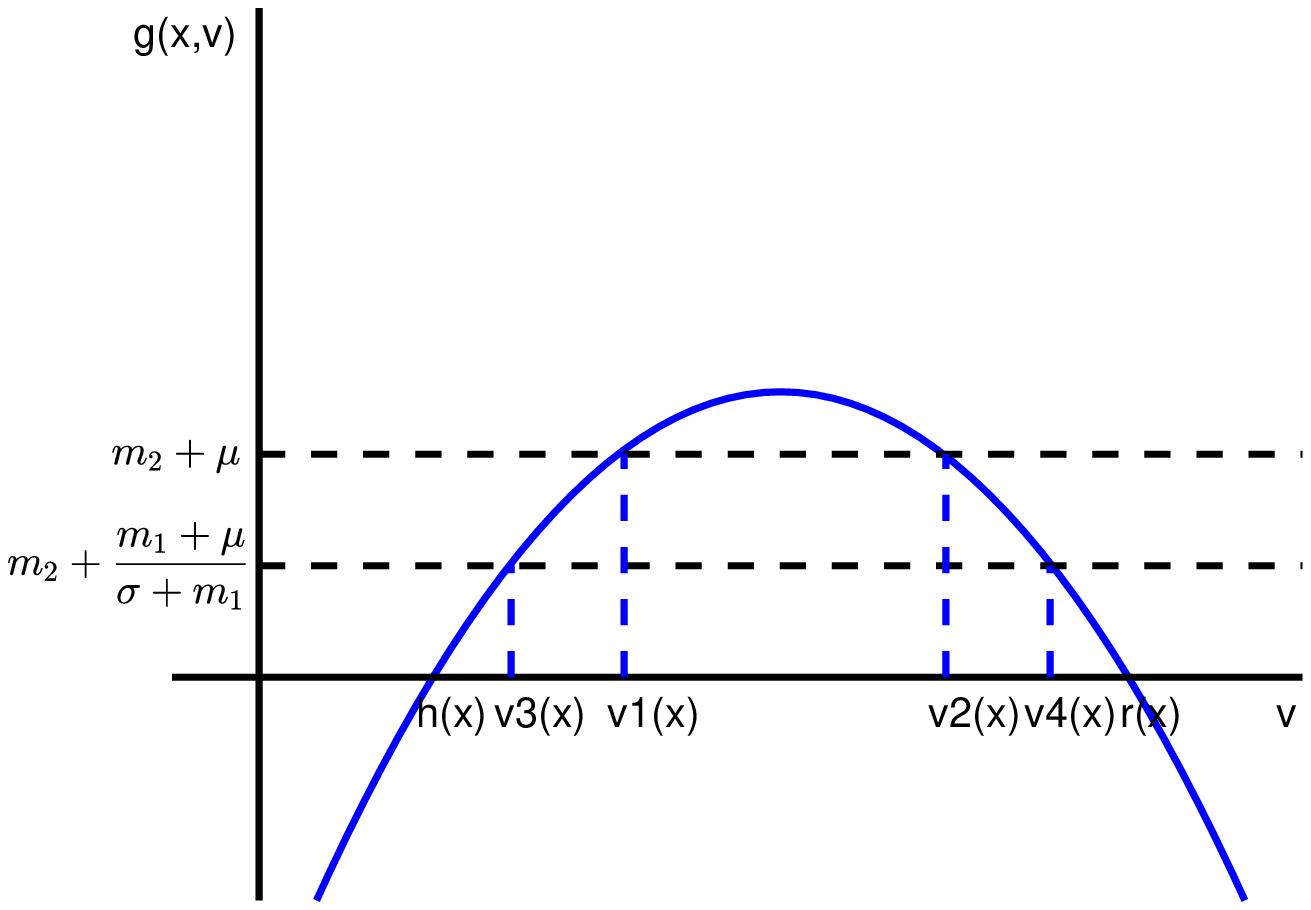}
\caption{Graph of $g(x,v)$ under conditions $(H1)$, $(H2)$ or $(H3)$. Upper left: $(H1)$; Upper right: $(H2)$; Lower: $(H3)$.}
\label{abc}
\end{figure}

We first prove the following lemma which will used to construct a lower solution of the system \eqref{2ss}.

\begin{lemma}\label{3le}
Let $p(x)\in C[0,L]$ and $d>0$, $q\geq 0$. Then the system
\begin{equation}\label{3wp}
    \begin{cases}
    dw_{xx}+qw_x+p(x)-(\sigma+m_1)w=0, & \qquad 0<x<L,\\
    -dw_x(0)+b_u qw(0)=0,\\
    dw_x(L)+b_d qw(L)=0,\\
    \end{cases}
\end{equation}
has a unique positive solution $w_p(x)$.
\end{lemma}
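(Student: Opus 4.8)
The plan is to regard \eqref{3wp} as a linear elliptic two-point boundary value problem for the operator $\mathcal{A}w:=-dw_{xx}-qw_x+(\sigma+m_1)w$ with the Robin conditions appearing in \eqref{3wp}, and to prove, in this order, simultaneous existence and uniqueness, and then positivity. Note that $d,q,\sigma,m_1$ are all constants, so \eqref{3wp} is in fact a constant-coefficient ODE with $p$ as the only variable datum; nonetheless it is cleanest to argue variationally so that all admissible boundary conditions ($b_u,b_d\ge0$, including the Neumann-type cases $qb_u=0$ or $qb_d=0$) are handled simultaneously.

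First I would symmetrize the equation: multiplying by $e^{\alpha x}$ with $\alpha=q/d$ turns $dw_{xx}+qw_x$ into $d(e^{\alpha x}w_x)_x$, so \eqref{3wp} is equivalent to the Sturm--Liouville problem $-(de^{\alpha x}w_x)_x+(\sigma+m_1)e^{\alpha x}w=e^{\alpha x}p$ with the same boundary conditions. Integrating the weak form by parts and substituting the boundary conditions of \eqref{3wp} produces the bilinear form
\[
B(w,\varphi)=\int_0^L e^{\alpha x}\bigl(dw_x\varphi_x+(\sigma+m_1)w\varphi\bigr)\,dx+qb_u\,w(0)\varphi(0)+qb_d e^{\alpha L}w(L)\varphi(L)
\]
on $H^1(0,L)$, together with the bounded functional $\varphi\mapsto\int_0^L e^{\alpha x}p\varphi\,dx$. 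Since $\sigma+m_1>0$ and $q,b_u,b_d\ge0$, $B$ is bounded and coercive on $H^1(0,L)$ (the zeroth-order term controls the $L^2$-norm, the first-order term controls the derivative, and the boundary terms are nonnegative when $\varphi=w$), so Lax--Milgram gives a unique weak solution $w_p\in H^1(0,L)$; elliptic regularity together with $p\in C[0,L]$ upgrades it to a classical $C^2$ solution of \eqref{3wp}, and coercivity also gives uniqueness in the classical class. Equivalently, one could prove directly that $\mathcal{A}w=0$ with these boundary conditions forces $w\equiv0$ (multiply by $e^{\alpha x}w$, integrate by parts, and use the sign of the boundary terms and $\sigma+m_1>0$) and then invoke the Fredholm alternative.

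For positivity I would use that $p\ge0$ and $p\not\equiv0$: this is the situation in which the lemma is applied to build a lower solution of \eqref{2ss}, the existence and uniqueness above need no sign condition, and it is precisely this hypothesis that makes the solution \emph{positive}. Testing the weak equation against the negative part $\varphi=w_p^-:=\max\{-w_p,0\}\in H^1(0,L)$ gives $0\le\int_0^L e^{\alpha x}p\,w_p^-\,dx=B(w_p,w_p^-)=-B(w_p^-,w_p^-)\le -c_0\|w_p^-\|_{H^1}^2$, hence $w_p^-\equiv0$, i.e. $w_p\ge0$. Since $\mathcal{A}w_p=p\ge0$ and $w_p\not\equiv0$, the strong maximum principle gives $w_p>0$ on $(0,L)$; and if $w_p$ vanished at $x=0$ (resp. $x=L$), Hopf's lemma would force $w_p'(0)>0$ (resp. $w_p'(L)<0$), contradicting the boundary conditions $dw_p'(0)=qb_u w_p(0)=0$ (resp. $dw_p'(L)=-qb_d w_p(L)=0$). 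Thus $w_p>0$ on all of $[0,L]$.

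The routine parts are the coercivity estimate and the regularity bootstrap; the step deserving the most care is keeping the boundary contributions correctly signed throughout, so that they aid rather than obstruct both coercivity and the maximum-principle argument, and dealing with the degenerate boundary case $qb_u=qb_d=0$, where positivity at the endpoints rests on Hopf's lemma rather than on a boundary term. As an elementary alternative, since the homogeneous ODE $dw''+qw'-(\sigma+m_1)w=0$ has the explicit basis $e^{r_\pm x}$ with $r_\pm=\dfrac{-q\pm\sqrt{q^2+4d(\sigma+m_1)}}{2d}$ (one root positive, one negative), one may construct $w_p$ by variation of parameters and solve the resulting $2\times2$ system for the two integration constants, checking that its determinant does not vanish; this route is more computational but completely self-contained.
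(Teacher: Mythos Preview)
Your proof is correct and takes a genuinely different route from the paper.  The paper argues via monotone iteration: it first establishes that the principal eigenvalue of the homogeneous operator with these Robin conditions is negative with a positive eigenfunction $\phi_1$, then uses the constant $\overline W=\tfrac{1}{\sigma+m_1}\max_{[0,L]}p$ as an upper solution and $\underline W=\varepsilon\phi_1$ as a lower solution, and invokes the standard existence theorem between ordered sub/supersolutions; uniqueness is obtained, as you also note as an alternative, from linearity of the equation.  Your approach instead symmetrizes to a self-adjoint form, applies Lax--Milgram to obtain existence and uniqueness in one stroke, and deduces positivity by testing against the negative part followed by the strong maximum principle and Hopf's lemma at the endpoints.

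Two remarks.  First, you are right to flag that positivity of $w_p$ requires $p\ge0$, $p\not\equiv0$; the lemma as stated omits this, but the paper's own proof needs it as well (the lower solution $\varepsilon\phi_1$ only works because $p>0$ dominates the negative term $\varepsilon\lambda_1\phi_1$ for small $\varepsilon$, and the upper solution needs $\max p>0$), and the sole application of the lemma in the paper is to a strictly positive $p$.  Second, what your approach buys is that existence, uniqueness, and nonnegativity all fall out of the same coercive bilinear form without invoking eigenvalue theory or a monotone-iteration framework; the paper's sub/supersolution argument, on the other hand, immediately delivers a \emph{positive} solution (since $\underline W>0$) and fits the toolkit already in use elsewhere in the paper.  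Either route is adequate here.
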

\begin{proof}
Consider the following eigenvalue problem
\begin{equation}\label{3wpeigen}
    \begin{cases}
    d\phi_{xx}+q\phi_x-(\sigma+m_1)\phi=\lambda\phi, & \qquad 0<x<L,\\
    -d\phi_x(0)+b_u q\phi(0)=0,\\
     d\phi_x(L)+b_d q\phi(L)=0,\\
    \end{cases}
\end{equation}
Then \eqref{3wpeigen} has a principal eigenvalue $\lambda_1$ satisfying
\begin{equation}
    -\lambda_1=\ds\inf_{\phi\in H^1(0,L),\phi\neq0}\ds\frac{\int_0^L e^{\alpha x}(d\phi_x^2+(\sigma+m_1)\phi^2)dx}{\int_0^L e^{\alpha x}\phi^2dx}
\end{equation}
Then $\lambda_1<0$ and the corresponding eigenfunction $\phi_1>0$.
We use the upper-lower solution method to prove the existence of a positive steady state solution. Let $\overline{W}(x)=\ds\frac{1}{\sigma+m_1}\ds\max_{x\in[0,L]}p(x)$, and $\underline{W}(x)=\varepsilon\phi_1(x)$ where $\varepsilon>0$ is small so that $\underline{W}(x)<\overline{W}(x)$ and $\phi_1$ is the positive eigenfunction corresponding to $\lambda_1$ of \eqref{3wpeigen}. Then it is easy to verify that $\overline{W}(x)$ and $\underline{W}(x)$ is a pair of upper-lower solution. From \cite[Theorem 4.1]{pao1996JMAA}, there exists a solution $w_p(x)$ of system \eqref{3wp} satisfying $\underline{W}(x)\leq w_p(x)\leq \overline{W}(x)$. The uniqueness follows from the maximum principle: if $w_1(x)$ and $w_2(x)$ are two solutions of system \eqref{3wp}, then $w_1(x)-w_2(x)$ satisfies a boundary value problem of linear ODE, and $w_1(x)-w_2(x)=0$ is the unique solution. Hence the solution of \eqref{3wp} is unique.
\end{proof}

Now we show that under condition $(H3)$, the benthic-drift population system is always persistent for large initial condition for any diffusion coefficient $d>0$ and advection rate $q\ge 0$, despite of strong Allee growth rate.
\begin{theorem}\label{H3}
Suppose that $g(x,v)$ satisfies (g1)-(g3) and (g4c), $d>0$ and $q\geq 0$, $b_u\geq0$ and $b_d\geq0$. Assume that $(H3)$ holds. Define
\begin{equation}
    \Sigma_1=\{(u(x),v(x))\in X_1: u(x)\geq e^{\alpha x}w_1(x), v(x)\geq v_1(x)\},
\end{equation}
where $v_1(x)$ is the smaller root of $g(x,v)=\mu+m_2$ and $w_1(x)$ is the unique positive solution of the  system
\begin{equation}\label{3w1}
    \begin{cases}
    dw_{xx}+qw_x+\ds\frac{A_b(x)\mu}{A_d(x)}e^{-\alpha x}v_1(x)-(\sigma+m_1)w=0, & \qquad 0<x<L,\\
    -dw_x(0)+b_u qw(0)=0,\\
    dw_x(L)+b_d qw(L)=0.\\
    \end{cases}
\end{equation}
Then $\Sigma_1$ is a positive invariant set for system \eqref{1}. Moreover, system \eqref{1} has a maximum steady state $(u_{max}(x),v_{max}(x))\in\Sigma_1$, and at least another positive steady state.
\end{theorem}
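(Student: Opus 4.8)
The plan is to work throughout with the cooperative transformed steady-state system \eqref{2ss} (its coupling coefficients $A_b\mu/A_d$ and $A_d\sigma/A_b$ are both positive), so that the comparison principle and the theory of coupled sub- and supersolutions apply. First I would verify that $(\underline w,\underline z):=\bigl(w_1(x),e^{-\alpha x}v_1(x)\bigr)$ is a (strict) lower solution of \eqref{2ss}. Under $(H3)$ the equation $g(x,v)=\mu+m_2$ has a smallest root $v_1(x)$, which is continuous because $g_v(x,v_1(x))>0$; hence $p(x):=\frac{A_b(x)\mu}{A_d(x)}e^{-\alpha x}v_1(x)\in C[0,L]$ and Lemma \ref{3le} furnishes the unique positive $w_1$. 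Substituting $(\underline w,\underline z)$ into \eqref{2ss}, the first equation holds with equality by \eqref{3w1}; in the second equation $e^{\alpha x}\underline z=v_1(x)$, so the reaction part $\underline z\,g(x,v_1)-(m_2+\mu)\underline z$ cancels and only the strictly positive transfer term $\frac{A_d\sigma}{A_b}w_1(x)$ remains; and the Robin conditions hold with equality. By the comparison principle for the cooperative system \eqref{wz}, any solution with initial data above $(\underline w,\underline z)$ stays above it for all $t>0$, and undoing $u=e^{\alpha x}w$, $v=e^{\alpha x}z$ yields the positive invariance of $\Sigma_1$. Moreover the solution of \eqref{wz} starting from $(\underline w,\underline z)$ is nondecreasing in $t$ (as $(\underline w,\underline z)$ is a subsolution) and bounded by Theorem \ref{ge}, hence converges to a steady state $(w_*,z_*)\ge(\underline w,\underline z)>0$; this already produces one positive steady state of \eqref{1}, lying in $\Sigma_1$.

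Next I would construct the maximal steady state by pairing $(\underline w,\underline z)$ with the large constant upper solution $(M_1,M_1)$ of \eqref{wz} supplied by Theorem \ref{ge}, enlarged so that $(M_1,M_1)\ge(\underline w,\underline z)$, and running the monotone iteration from above on the order interval $[(\underline w,\underline z),(M_1,M_1)]$. The resulting maximal solution $(w_{\max},z_{\max})$ satisfies $(w_{\max},z_{\max})\ge(\underline w,\underline z)>0$, so $(u_{\max},v_{\max}):=(e^{\alpha x}w_{\max},e^{\alpha x}z_{\max})$ is positive and in $\Sigma_1$. Since, by Proposition \ref{pro:bound}, every positive steady state of \eqref{1} is dominated by $(M_1,M_1)$ after the change of variables, it is also dominated by $(w_{\max},z_{\max})$ through the iteration; hence $(u_{\max},v_{\max})$ is the maximum steady state.

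The delicate part is the existence of a second positive steady state. If the $(w_*,z_*)$ from the first paragraph is already distinct from $(w_{\max},z_{\max})$ we are finished, so assume $(w_{\max},z_{\max})$ attracts the whole interval $[(\underline w,\underline z),(M_1,M_1)]$. I would then exploit the local asymptotic stability of $(0,0)$ (Corollary \ref{zeross}) and its basin (Proposition \ref{0basin}) via a threshold argument along the monotone family of initial data $\zeta_s=s\,(e^{\alpha x}w_1,v_1)$, $s\in[0,1]$: for $s$ small the solution tends to $(0,0)$, for $s=1$ it tends to the positive steady state of the first paragraph, and by monotonicity of the semiflow $\{\,s:\ \text{solution}\to(0,0)\,\}$ is a downward-closed, relatively open interval $[0,s^{*})$ with $0<s^{*}\le1$. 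The orbit from $\zeta_{s^{*}}$ does not converge to $(0,0)$; one then argues that its $\omega$-limit set consists of steady states and therefore contains a positive equilibrium $(u^{\sharp},v^{\sharp})$ distinct from $(u_{\max},v_{\max})$ — for if it equalled $(u_{\max},v_{\max})$, then $\zeta_{s^{*}}$ would lie in a relatively open set of data converging to $(u_{\max},v_{\max})$, contradicting the definition of $s^{*}$. (A fixed-point-index count for the equivalent elliptic problem, performed inside $\Sigma_1$ where $v\ge v_1(x)>h(x)$, is an alternative route.)

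The main obstacle is exactly in controlling that $\omega$-limit set: under $(H3)$ the compactness condition \eqref{h1} \emph{fails}, since $(H3)$ forces $\overline{f_v}\ge g_{max}>m_2+\mu$, so the semiflow of \eqref{1} is not $\alpha$-contracting and $\omega$-limit sets need not be compact sets of equilibria a priori. The way around this is to split off the $u$-component, which is smoothed by the parabolic equation and has precompact orbits, while the $v$-component is bounded and, at equilibrium, determined pointwise by $u$; combined with the Lyapunov functional $E$ in \eqref{lyp}, which is nonincreasing along orbits even without \eqref{h1} and bounded below on bounded orbits, the identity for $\frac{d}{dt}E$ forces any bounded orbit to approach the set of steady states. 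Establishing this convergence rigorously in the degenerate setting is the technical heart of the argument.
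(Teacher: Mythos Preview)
Your construction of the lower solution $(\underline w,\underline z)=(w_1,e^{-\alpha x}v_1)$, the positive invariance of $\Sigma_1$, and the maximal steady state via monotone iteration from a large constant upper solution all match the paper's argument essentially line for line.

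The divergence is in obtaining the \emph{second} positive steady state. The paper does not use a dynamical threshold argument at all; instead it stays entirely at the level of the steady-state problem \eqref{2ss} and invokes Amann's three-solutions theorem \cite[Theorem~14.2]{amann1976}. The point is that, in addition to the pair $(\underline w,\underline z)\le(\overline w,\overline z)$ you built, Proposition~\ref{0basin} already supplies a second ordered pair $(\underline w_1,\underline z_1)=(0,0)\le(\overline w_1,\overline z_1)$ near zero with $(\overline w_1,\overline z_1)<(\underline w,\underline z)$ (since $v_1(x)>h(x)$). Two such nested pairs of sub/supersolutions for a cooperative system give at least three ordered nonnegative solutions, hence at least two positive ones. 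This is a purely elliptic/fixed-point-index argument and never touches the parabolic flow, so the failure of \eqref{h1} under $(H3)$ is irrelevant. You actually mention this route parenthetically (``a fixed-point-index count \ldots\ is an alternative route''), but it is in fact the paper's \emph{main} route, not an alternative.

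By contrast, your primary argument --- the threshold family $\zeta_s$ and the analysis of the $\omega$-limit set at $s^*$ --- has exactly the gap you identify: under $(H3)$ one has $\overline{f_v}\ge g_{max}>m_2+\mu$, so \eqref{h1} fails, the semiflow is not $\alpha$-contracting, and Theorem~\ref{ly} does not apply. Your proposed workaround (splitting off the compact $u$-component and using the Lyapunov functional without compactness) is plausible heuristically but is not a proof as stated; you yourself call it ``the technical heart of the argument'' without supplying it. Moreover, the openness of the basin of $(u_{\max},v_{\max})$ that you need to conclude $(u^\sharp,v^\sharp)\neq(u_{\max},v_{\max})$ also hinges on that missing compactness. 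The paper's approach simply avoids all of this.
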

\begin{proof}
Assume that $(H3)$ is satisfied, we consider the equivalent system \eqref{wz} of \eqref{1}. From Lemma \ref{3le}, system \eqref{3w1} has a unique solution $w_1(x)$. We set $(\underline{w}(x),\underline{z}(x))=(w_1(x),e^{-\alpha x}v_1(x))$. Then
\begin{equation}
    \begin{cases}
    d\underline{w}_{xx}+q\underline{w}_x+\ds\frac{A_b(x)\mu}{A_d(x)}\underline{z}(x)-(\sigma+m_1)\underline{w}=0, & \qquad 0<x<L,\\
    \underline{z}g(x,e^{\alpha x}\underline{z})-(m_2+\mu)\underline{z}+\ds\frac{A_d(x)\sigma}{A_b(x)}\underline{w}\geq 0, & \qquad 0<x<L,\\
    -d\underline{w}_x(0)+b_u q\underline{w}(0)=0,\\
    d\underline{w}_x(L)+b_d q\underline{w}(L)=0.\\
    \end{cases}
\end{equation}
So $(\underline{w}(x),\underline{z}(x))$ is a lower solution of \eqref{2ss}. On the other hand, from Proposition \ref{maxsol}, $(\overline{w}(x),\overline{z}(x))=(\overline{\theta}_1\ds\max_{x\in [0,L]}e^{-\alpha x}r(x),\ds\max_{x\in[0,L]}e^{-\alpha x}r(x))$ is an upper solution of \eqref{2ss}. It is easy to check that $\overline{z}(x)=\ds\max_{x\in[0,L]}e^{-\alpha x}r(x))>e^{-\alpha x}v_1(x)=\underline{z}(x)$ and $\overline{w}(x)=\overline{\theta}_1\ds\max_{x\in [0,L]}e^{-\alpha x}r(x)>\underline{w}(x)$ from the construction of $\underline{w}(x)$ in Lemma \ref{3le}. So from \cite[Theorem 4.1]{pao1996JMAA}, there exists a positive solution $(w(x),z(x))$ of system \eqref{2ss} satisfying $\underline{w}(x)<w(x)<\overline{w}(x)$ and $\underline{z}(x)<z(x)<\overline{z}(x)$. From Proposition \ref{maxsol}, there exists a maximal solution $(u_{max}(x),v_{max}(x))\in\Sigma_1$. Since the solution of \eqref{wz} with initial condition $(\underline{w}(x),\underline{z}(x))$ is increasing, then $\Sigma_1$ is positively invariant for the dynamics of \eqref{1}. The existence of another positive steady state follows from \cite[Theorem 14.2]{amann1976} and the existence of another pair of upper-lower solutions in the proof of Proposition \ref{0basin} part 1.
\end{proof}

In the last we show that the persistence of population in the intermediate $\mu$ range (satisfying $(H2)$) may depend on the diffusion coefficient $d$ and advection rate $q$. The following result on the existence of positive steady state solutions only holds for the closed environment (NF/NF $b_u=b_d=0$) case.

\begin{theorem}\label{H2}
Suppose that $g(x,u)$ satisfies (g1)-(g3) and (g4c),  $d>0$ and $q\geq 0$. Assume that $(H2)$ holds, and the cross-section $A_b(x)$ and $A_d(x)$ are spatially homogeneous. Let $v_3(x)$ and $v_4(x)$ be the roots of $g(x,v)=m_2+\ds\frac{m_1\mu}{\sigma+m_1}$ satisfying $v_3(x)<v_4(x)$, and assume that
\begin{equation}\label{upp-sub}
\ds\max_{y\in [0,L]} e^{-\alpha y}v_3(y)<\min_{y\in [0,L]} e^{-\alpha y} v_4(y).
\end{equation}
Then when $b_u=0$ and $b_d=0$, \eqref{1} has at least two positive steady state solutions. In particular the condition \eqref{upp-sub} is satisfied if
\begin{equation}\label{upp-sub2}
0<\frac{q}{d}<\frac{1}{L}\ln\left(\frac{\min_{y\in [0,L]}  v_4(y)}{\max_{y\in [0,L]} v_3(y)}\right).
\end{equation}
\end{theorem}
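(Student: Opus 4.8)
The plan is to work with the transformed steady-state system \eqref{2ss}, which under the closed-environment assumption $b_u=b_d=0$ carries the Neumann conditions $w_x(0)=w_x(L)=0$, to build an ordered pair of sub-- and supersolutions producing one positive steady state, and then to invoke the three-solution theorem \cite[Theorem 14.2]{amann1976} together with an additional sub--supersolution pair to produce a second one, mirroring the proof of Theorem \ref{H3}.

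The new ingredient is a \emph{spatially constant} lower solution. Under $(H2)$ the two roots $v_3(x)<v_4(x)$ of $g(x,\cdot)=m_2+\tfrac{m_1\mu}{\sigma+m_1}$ exist for every $x$ and satisfy $v_3(x)<s(x)<v_4(x)$ (by (g3)--(g4c)), so $g(x,w)\ge m_2+\tfrac{m_1\mu}{\sigma+m_1}$ for all $w\in[v_3(x),v_4(x)]$. Condition \eqref{upp-sub} says the interval $\big[\max_y e^{-\alpha y}v_3(y),\,\min_y e^{-\alpha y}v_4(y)\big]$ has nonempty interior; I would pick a constant $c$ in its interior and set
\[
\underline{z}(x)\equiv c,\qquad \underline{w}(x)\equiv\frac{A_b\mu c}{A_d(\sigma+m_1)}.
\]
Then $v_3(x)<c\,e^{\alpha x}<v_4(x)$ for all $x$, hence $g(x,c\,e^{\alpha x})>m_2+\tfrac{m_1\mu}{\sigma+m_1}$; using $\tfrac{A_d\sigma}{A_b}\underline{w}=\tfrac{\sigma\mu}{\sigma+m_1}c$ and $(m_2+\mu)-\tfrac{\sigma\mu}{\sigma+m_1}=m_2+\tfrac{m_1\mu}{\sigma+m_1}$, a direct check shows $(\underline{w},\underline{z})$ satisfies the two differential inequalities for a lower solution of \eqref{2ss} (strictly in the $z$-equation), while the Neumann conditions hold because $\underline{w}$ is constant --- precisely the place where both $b_u=b_d=0$ and the spatial homogeneity of $A_b,A_d$ are needed. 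Taking the supersolution $(\overline{w},\overline{z})=\big(\overline{\theta}_1\max_y e^{-\alpha y}r(y),\,\max_y e^{-\alpha y}r(y)\big)$ from Proposition \ref{maxsol}, and noting $c<\max_y e^{-\alpha y}r(y)$ since $v_3<r$, \cite[Theorem 4.1]{pao1996JMAA} yields a positive steady state of \eqref{2ss} between $(\underline{w},\underline{z})$ and $(\overline{w},\overline{z})$, hence a first positive steady state of \eqref{1}.

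For the second, I would apply \cite[Theorem 14.2]{amann1976} with the pair $(0,0)\le(\overline{w}_1,\overline{z}_1)$ --- where $(\overline{w}_1,\overline{z}_1)$, $\overline{z}_1=\min_y e^{-\alpha y}h(y)$, is the supersolution from the proof of Proposition \ref{0basin}(1) --- and the pair $(\underline{w},\underline{z})\le(\overline{w},\overline{z})$ above; these satisfy $(0,0)\le(\underline{w},\underline{z})$, $(\overline{w}_1,\overline{z}_1)\le(\overline{w},\overline{z})$, and, decisively, $(\underline{w},\underline{z})\not\le(\overline{w}_1,\overline{z}_1)$: with $y_0$ realizing $\min_y e^{-\alpha y}h(y)$, $\underline{z}=c>\max_y e^{-\alpha y}v_3(y)\ge e^{-\alpha y_0}v_3(y_0)>e^{-\alpha y_0}h(y_0)=\overline{z}_1$ since $v_3>h$. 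The theorem then produces three distinct steady states: one in $[(0,0),(\overline{w}_1,\overline{z}_1)]$, which must be $(0,0)$ because that box contains no positive steady state (Proposition \ref{0basin}(1), via Theorem \ref{pro:4.3}(2)); one in $[(\underline{w},\underline{z}),(\overline{w},\overline{z})]$, which is positive; and one strictly between, which is not $(0,0)$ (as $(0,0)\le(\overline{w}_1,\overline{z}_1)$) and hence, being a nonnegative nontrivial steady state, is strictly positive by \eqref{1.1} and the strong maximum principle. Transforming back via $u=e^{\alpha x}w$, $v=e^{\alpha x}z$ gives at least two positive steady states of \eqref{1}. Finally, \eqref{upp-sub2}$\,\Rightarrow\,$\eqref{upp-sub} is immediate: $e^{-\alpha y}$ is nonincreasing on $[0,L]$, so $\max_y e^{-\alpha y}v_3(y)\le\max_y v_3(y)$ and $\min_y e^{-\alpha y}v_4(y)\ge e^{-\alpha L}\min_y v_4(y)$, and \eqref{upp-sub2} is precisely the inequality $e^{\alpha L}\max_y v_3(y)<\min_y v_4(y)$.

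The main obstacle is the construction in the second paragraph: one has to recognize that the $v_1$-based lower solution used for Theorem \ref{H3} is unavailable under $(H2)$, that a spatially constant lower solution is the right replacement (which is what forces the homogeneity hypotheses and the NF/NF boundary condition), and that the threshold $m_2+\tfrac{m_1\mu}{\sigma+m_1}$ --- and therefore the exact form of \eqref{upp-sub} in terms of $v_3,v_4$ --- is dictated by the equilibrium algebra of the benthic--drift coupling. Everything after that step is a routine assembly of Propositions \ref{maxsol} and \ref{0basin}, Theorem \ref{pro:4.3}, and the argument of Theorem \ref{H3}.
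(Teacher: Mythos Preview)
Your proof is correct and follows essentially the same route as the paper. The paper also works in the transformed Neumann system, takes the constant pair $(\theta_1\max_y e^{-\alpha y}v_3(y),\,\max_y e^{-\alpha y}v_3(y))$ as lower solution (i.e.\ the left endpoint of your interval rather than an interior $c$), pairs it with the same $(\overline{w}_2,\overline{z}_2)$ from Proposition~\ref{maxsol}, and then applies \cite[Theorem 14.2]{amann1976} to the ordered chain $(\underline{w}_1,\underline{z}_1)<(\overline{w}_1,\overline{z}_1)<(\underline{w}_2,\underline{z}_2)<(\overline{w}_2,\overline{z}_2)$; your non-ordering check $(\underline{w},\underline{z})\not\le(\overline{w}_1,\overline{z}_1)$ is the same condition in slightly weaker (but sufficient) form.
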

\begin{proof}
Using transform $u=e^{\alpha x}w$ and $v=e^{\alpha x}z$, the steady state equation in this case is of the form
\begin{equation}\label{2ssnfnf}
\begin{cases}
  dw_{xx}+qw_x+\ds\frac{A_b}{A_d}\mu z-\sigma w-m_1 w=0, & \qquad 0<x<L,\\
  zg(x,e^{\alpha x}z)-m_2 z-\mu z+\ds\frac{A_d}{A_b}\sigma w=0, & \qquad 0\leq x\leq L,\\
  w_{x}(0)=0, \;\; w_{x}(L)=0.\\
\end{cases}
\end{equation}
From Proposition \ref{maxsol}, $(\overline{w}_2,\overline{z}_2)=(\theta_1\ds\max_{y\in [0,L]}e^{-\alpha y}r(y),\displaystyle\max_{y\in[0,L]}e^{-\alpha y}r(y))$ is an upper solution of \eqref{2ssnfnf}. Set
\begin{equation*}
    \underline{w}_2=\theta_1\ds\max_{y\in [0,L]} e^{-\alpha y}v_3(y),\;\;
\underline{z}_2=\ds\max_{y\in [0,L]} e^{-\alpha y}v_3(y).
\end{equation*}
Then from \eqref{upp-sub},
\begin{equation*}
\begin{split}
 g(x,e^{\alpha x}\underline{z}_2)=&g(x,e^{\alpha x}\ds\max_{y\in [0,L]} e^{-\alpha y}v_3(y))\\
 \geq& g(x,e^{\alpha x}e^{-\alpha x}v_3(x))= g(x,v_3(x))=m_2+\mu-\ds\frac{\mu\sigma}{\sigma+m_1},
 \end{split}
\end{equation*}
we obtain that
\begin{equation*}
\begin{cases}
  d\underline{w}_2''+q\underline{w}_2'+\ds\frac{A_b}{A_d}\mu \underline{z}_2-\sigma \underline{w}_2-m_1 \underline{w}_2=0, & \qquad 0<x<L,\\
  \underline{z}_2g(x,e^{\alpha x}\underline{z}_2)-m_2 \underline{z}_2-\mu \underline{z}_2+\ds\frac{A_d}{A_b}\sigma \underline{w}_2\geq0, & \qquad 0\leq x\leq L,\\
  \underline{w}_2'(0)=0, \;\; \underline{w}_2'(L)=0.\\
\end{cases}
\end{equation*}
So $(\underline{w}_2,\underline{z}_2)$ is a lower solution of \eqref{2ssnfnf}, and we have $\underline{w}_2<\overline{w}_2$, $\underline{z}_2<\overline{z}_2$. Therefore \eqref{2ssnfnf} has at least one positive solution between $(\underline{w}_2,\underline{z}_2)$ and $(\overline{w}_2,\overline{z}_2)$. Moreover $(\underline{w}_1,\underline{z}_1)=(0,0)$ is a lower solution of \eqref{2ssnfnf}, and from Proposition \ref{0basin}, $(\overline{w}_1,\overline{z}_1)$ is an upper solution of \eqref{2ssnfnf}, hence we have two pairs of upper and lower solutions which satisfy
\begin{equation*}
  (\underline{w}_1,\underline{z}_1)<(\overline{w}_1,\overline{z}_1)<(\underline{w}_2,\underline{z}_2)<(\overline{w}_2,\overline{z}_2).
\end{equation*}
From \cite[Theorem 14.2]{amann1976}, \eqref{2ssnfnf} has at least three nonnegative solutions, which implies that there exist at least two positive solutions. The condition \eqref{upp-sub2} can be derived from \eqref{upp-sub} since
\begin{equation*}
\ds\max_{y\in [0,L]} e^{-\alpha y}v_3(y)\le \ds\max_{y\in [0,L]}v_3(y),  \;\; e^{-\alpha L} \min_{y\in [0,L]} v_4(y)\le \min_{y\in [0,L]} e^{-\alpha y} v_4(y).
\end{equation*}
\end{proof}

\section{Numerical Simulations}

In this section we show some numerical simulation results to demonstrate our theoritical results proved above and also provide some further quantitative information on the dynamical behavior of the system \eqref{1}. In particular we show the effect of the tranfer rate $\mu$ and advection $q$ on the maximal steady states. In this section we always assume that
\begin{equation}\label{434}
\begin{split}
    &f(x,v)=vg(x,v)=v(1-v)(v-0.4), \;\; d=0.02, \;\; L=10, \;\; \\
    & m_1=m_2=0.02, \;\; \sigma=0.2, \;\; A_d(x)=A_b(x)=1.
    \end{split}
\end{equation}
and we consider the special case of \eqref{1}:
\begin{equation}\label{3s}
\begin{cases}
  u_t=d u_{xx}-q u_x+\ds\frac{A_b}{A_d}\mu v-m_1 u, & \qquad 0<x<L, \; t>0,\\
  v_t=v(1-v)(v-h)-m_2 v-\mu v+\ds\frac{A_d}{A_b}\sigma u, & \qquad 0\leq x\leq L, \; t>0,\\
  du_{x}(0,t)-qu(0,t)=0, & \qquad t>0,\\
  du_{x}(L,t)-qu(L,t)=-b_d qu(L,t), & \qquad t>0,\\
  u(x,0)=u_0(x)\ge 0, \; v(x,0)=v_0(x)\ge 0, & \qquad x\in (0,L).
\end{cases}
\end{equation}

\begin{figure}[ht]
\centering
\includegraphics[width=0.5\textwidth]{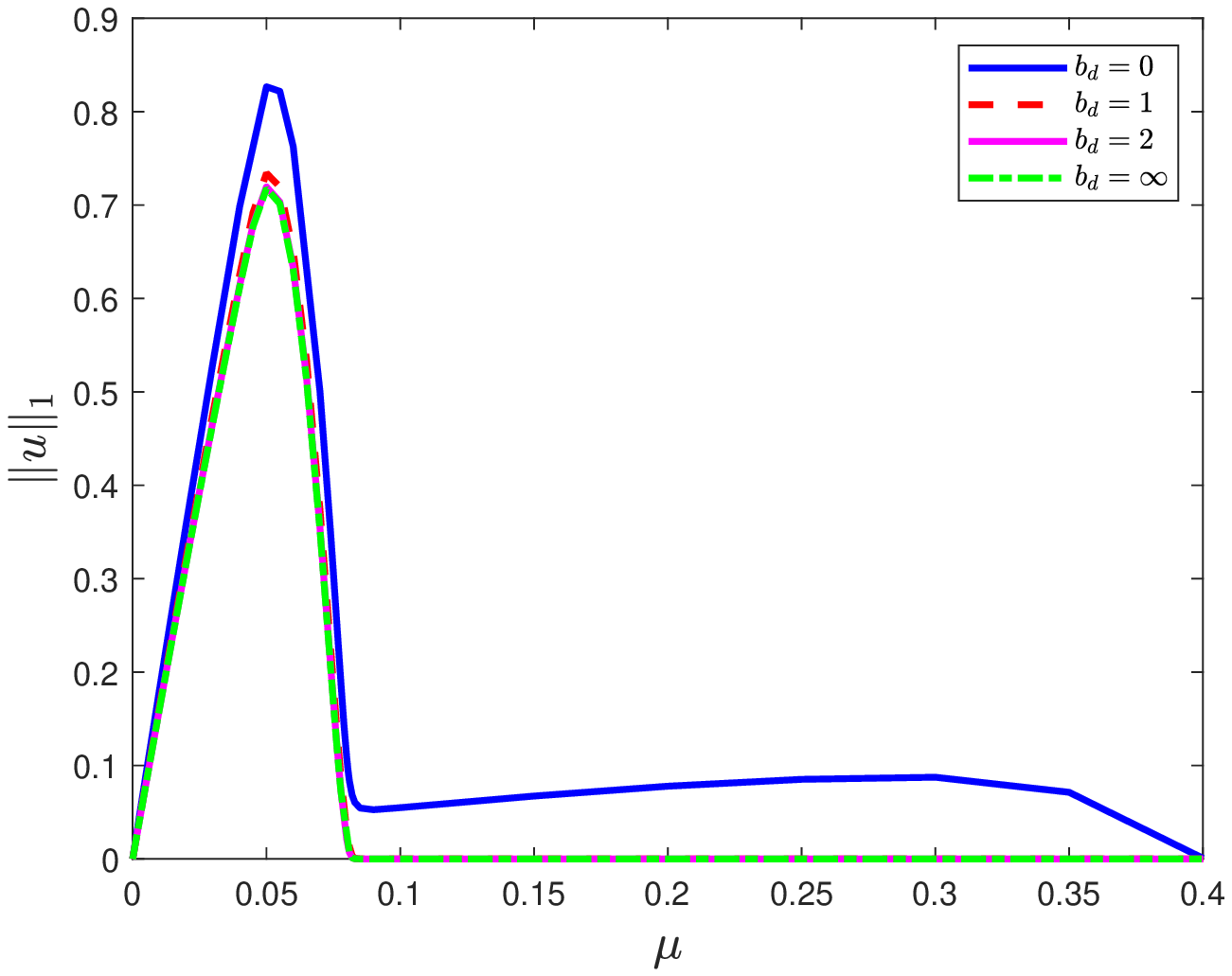}\includegraphics[width=0.5\textwidth]{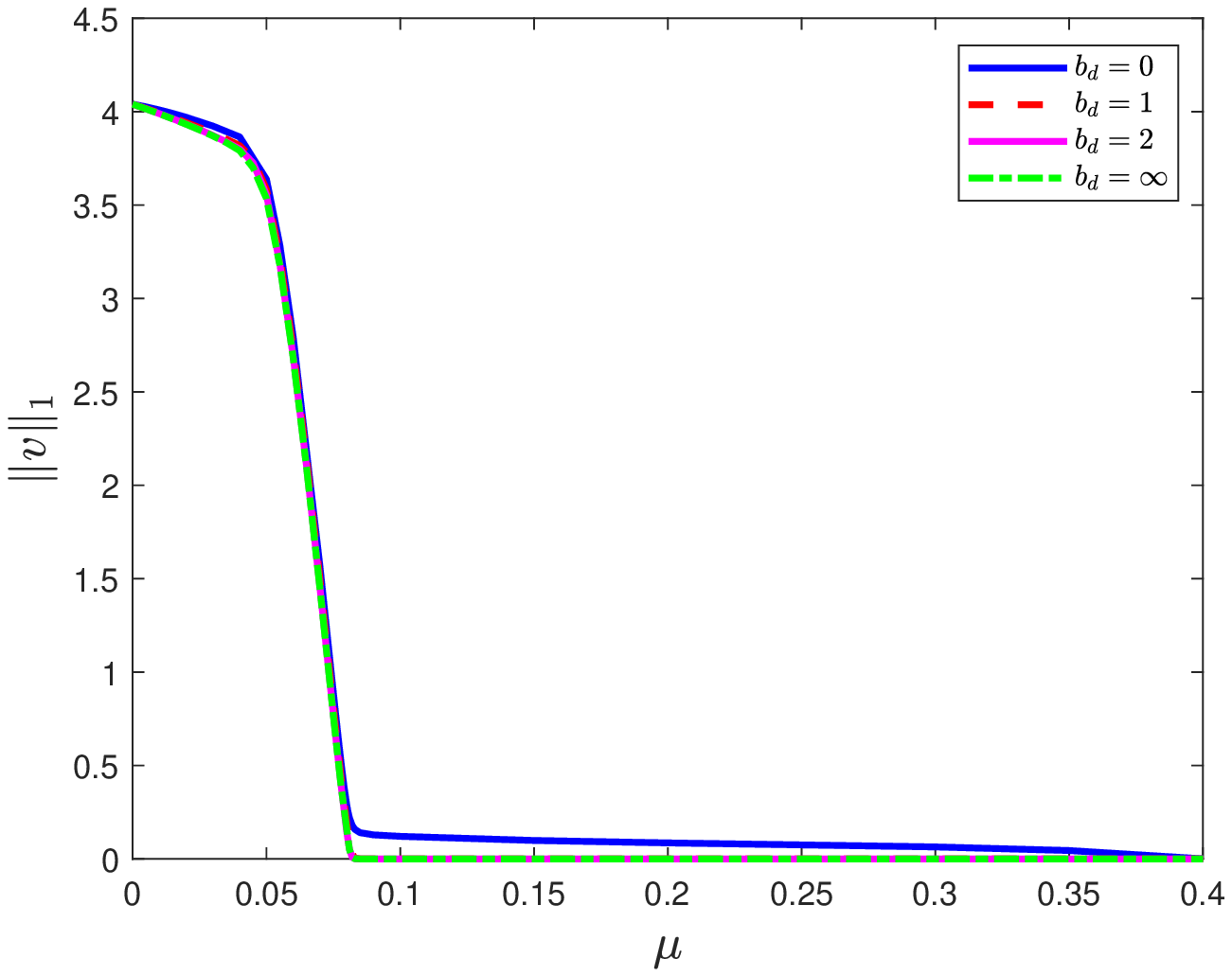}\\
\caption{The total biomass of the maximal steady state solution of \eqref{3s}  with respect to the transfer rate $\mu$ under different boundary conditions. The horizontal axis is $\mu$, the vertical axis are $\|u(\cdot,t)\|_1=\int_0^L u(x,t)dx$ (Left) and $\|v(\cdot,t)\|_1=\int_0^L v(x,t)dx$ (Right). Here the parameters satisfy \eqref{434},  $q=0.2$ and the initial condition is $u_0=0.2$, $v_0=0.2$.}
\label{3norm1mu}
\end{figure}

Figure \ref{3norm1mu} shows the variation of total biomass of the maximal steady states for different $b_d$ and varying transfer rate $\mu$ from the benthic zone to the drift zone. It can be observed from the right panel that The total biomass of the benthic population is always decreasing with respect to $\mu$ since it can be regarded as a loss of the benthic population. When $\mu=0$, the drift population does not have the source of growing and it cannot live. At the lower $\mu$ level, with the increase of transfer rate $\mu$, the drift population becomes larger, but  after an optimal intermediate $\mu$ value (about $\mu^*\approx 0.05$), the drift population starts to decline with respect to $\mu$ for the drift population. We can calculate the two threshold values  $\mu_1=\mu_2=0.77$ and $\mu_3=0.07$, defined in the conditions $(H1)$-$(H3)$. One can observe that in $(H3)$ regime ($0<\mu<\mu_3$), the population persists robustly for all boundary  conditions (see Theorem \ref{H3}); and in $(H2)$ regime ($\mu_3<\mu<\mu_2$), the biomass is nearly zero for open environment and is larger than zero for closed environment (see Theorem \ref{H2} for a partial justification).
Figure \ref{3norm1mu} only shows the biomass up to $\mu=0.4$, and for $0.4<\mu<\mu_2$, the biomass for even the NF/NF boundary condition becomes so small which cannot be distinguished from zero. For $\mu>\mu_2$ ($(H1)$ regime), the extinction of population is ensured in Theorem \ref{pro:4.3}.

In Figure \ref{3mu}, the maximal steady state solutions under three types boundary conditions (NF/H, NF/FF, NF/FF) and for varying transfer rate $\mu$ are plotted. For all boundary conditions, the benthic population is decreasing in $\mu$. The drift population is increasing in $\mu$ for $0<\mu<\mu^*$ ($\mu^*$ is the peak transfer rate where the drift biomass reachs the maximum), and for $\mu^*<\mu<\mu_3$, the drift population is decreasing in downstream part but increasing in upstream part. Similarly in Figure \ref{3q}, the maximal steady state solutions under three types boundary conditions (NF/H, NF/FF, NF/FF) and for varying advection rate $q$. One can observe that a larger advection rate leads to a larger benthic population for every point in the river, but for drift population, a larger advection rate decreases the downstream population and increases the upstream population.

\begin{figure}[htb!]
\centering
\includegraphics[width=0.5\textwidth]{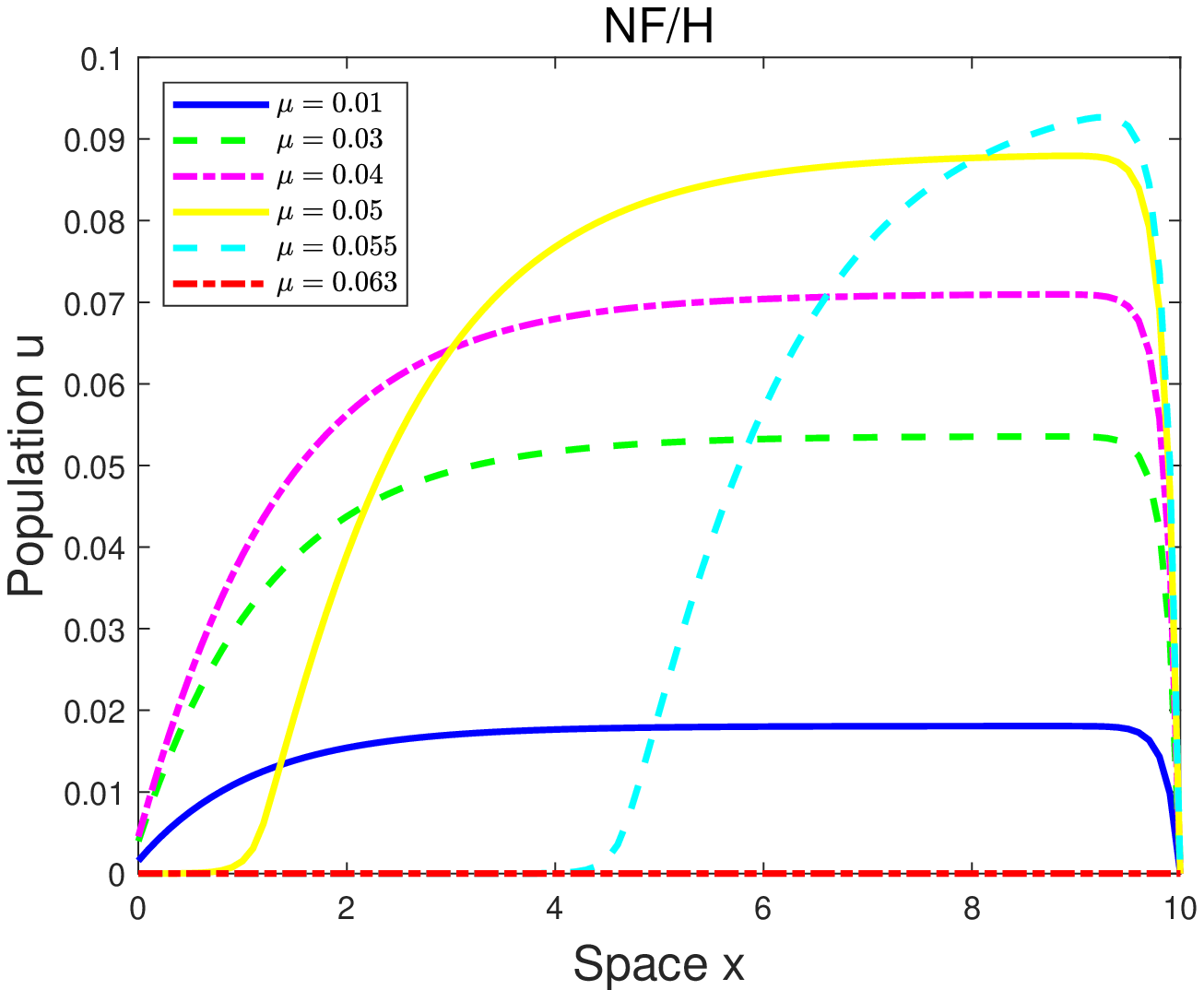}\includegraphics[width=0.5\textwidth]{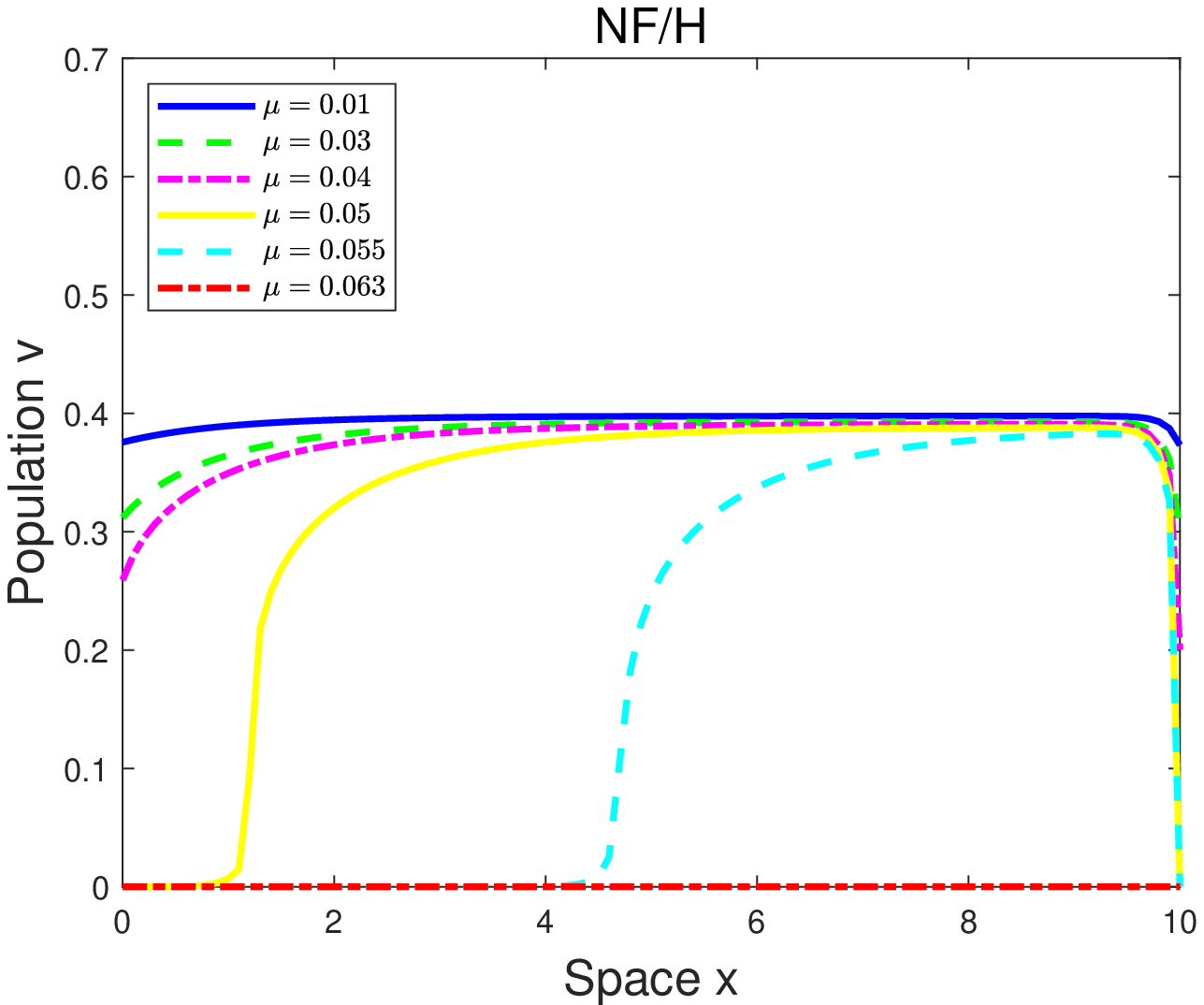}\\
\includegraphics[width=0.5\textwidth]{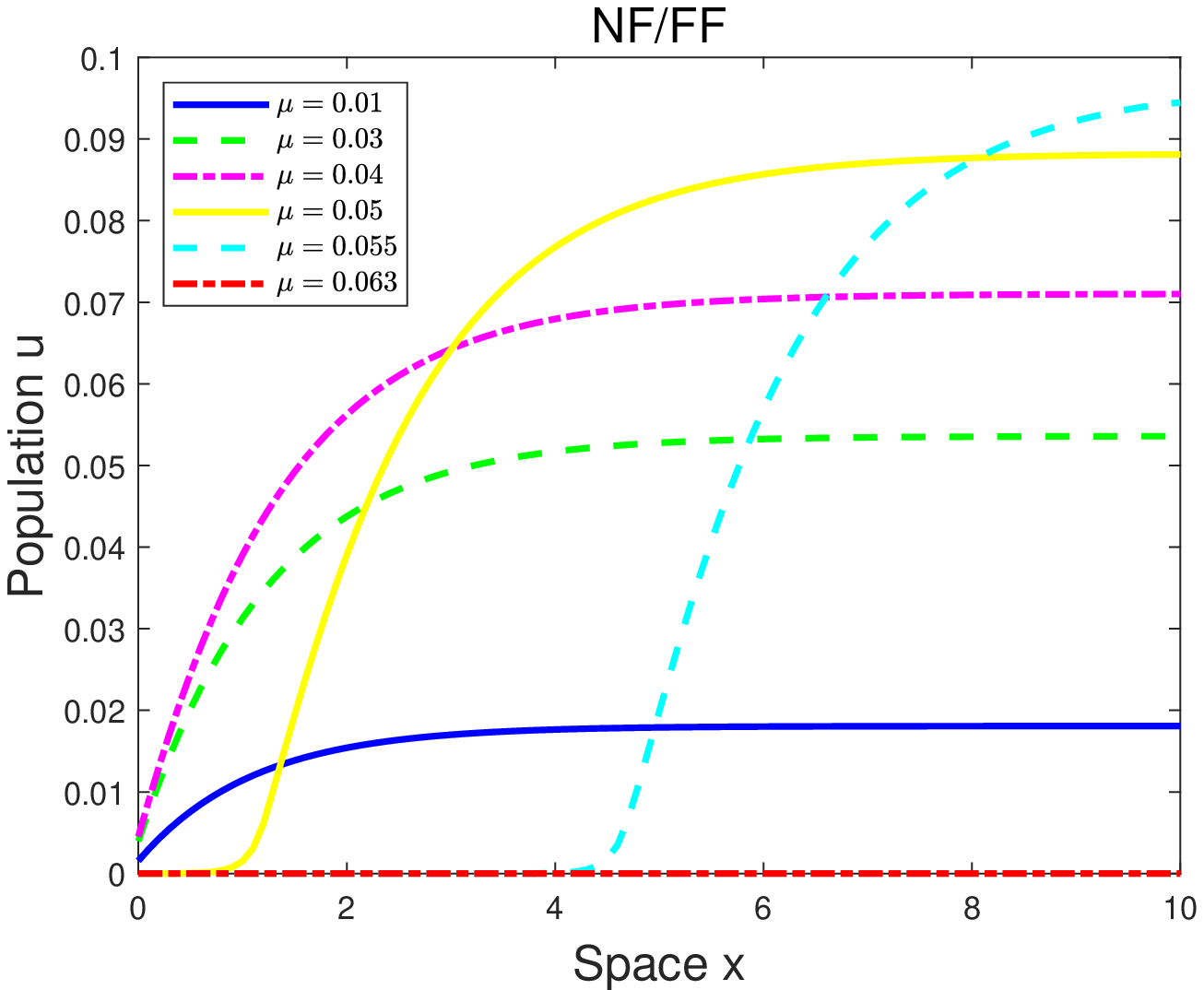}\includegraphics[width=0.5\textwidth]{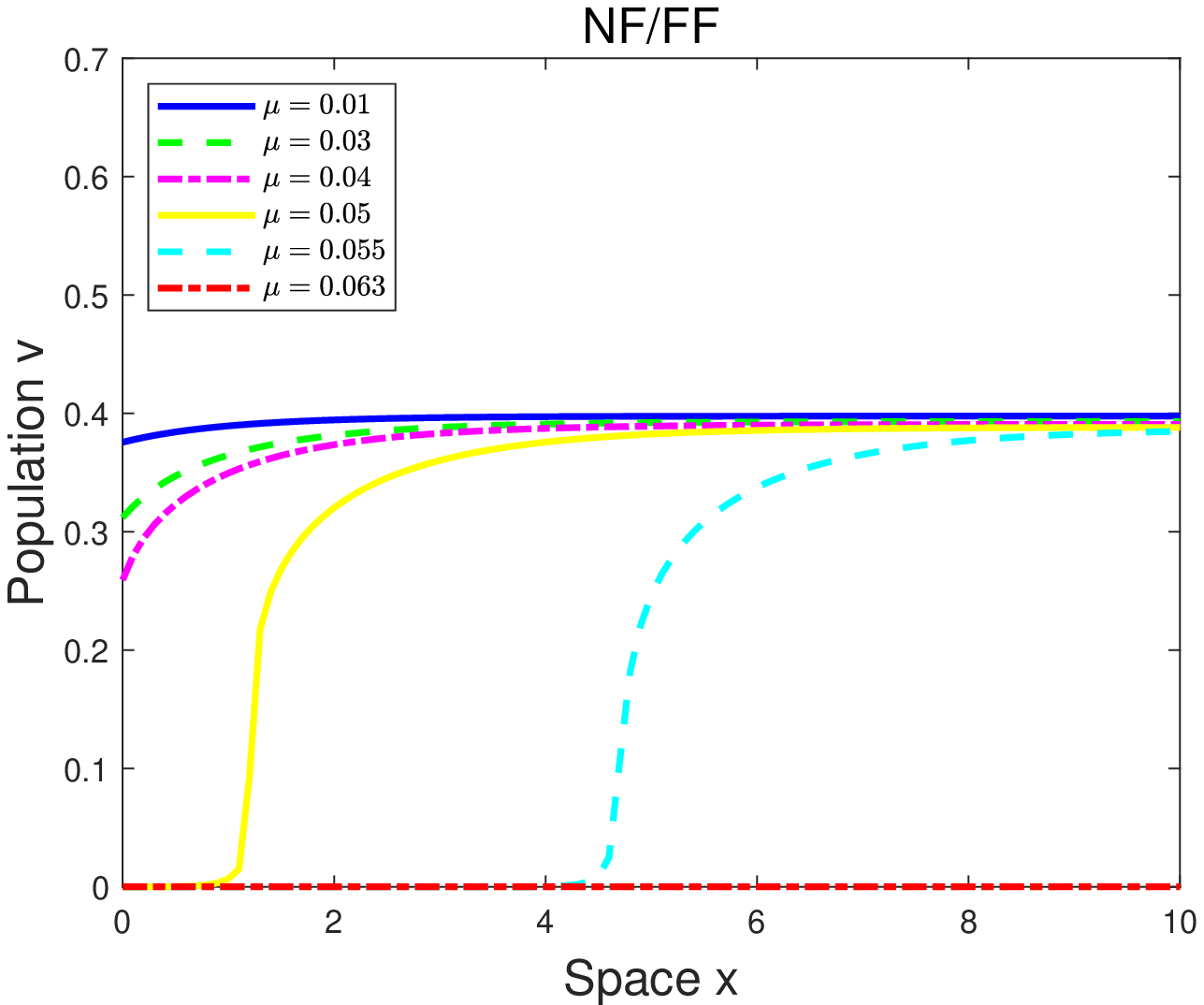}\\
\includegraphics[width=0.5\textwidth]{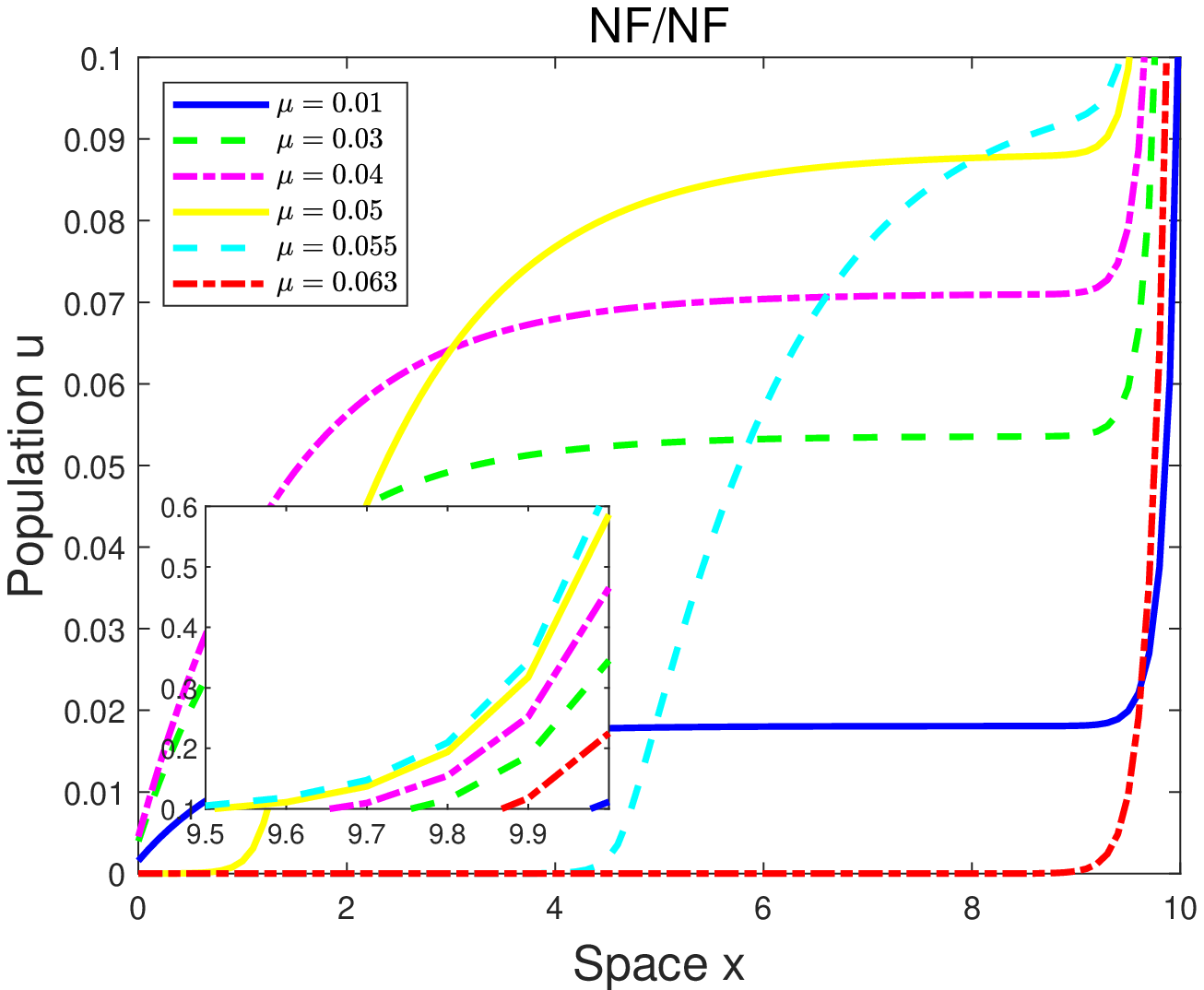}\includegraphics[width=0.5\textwidth]{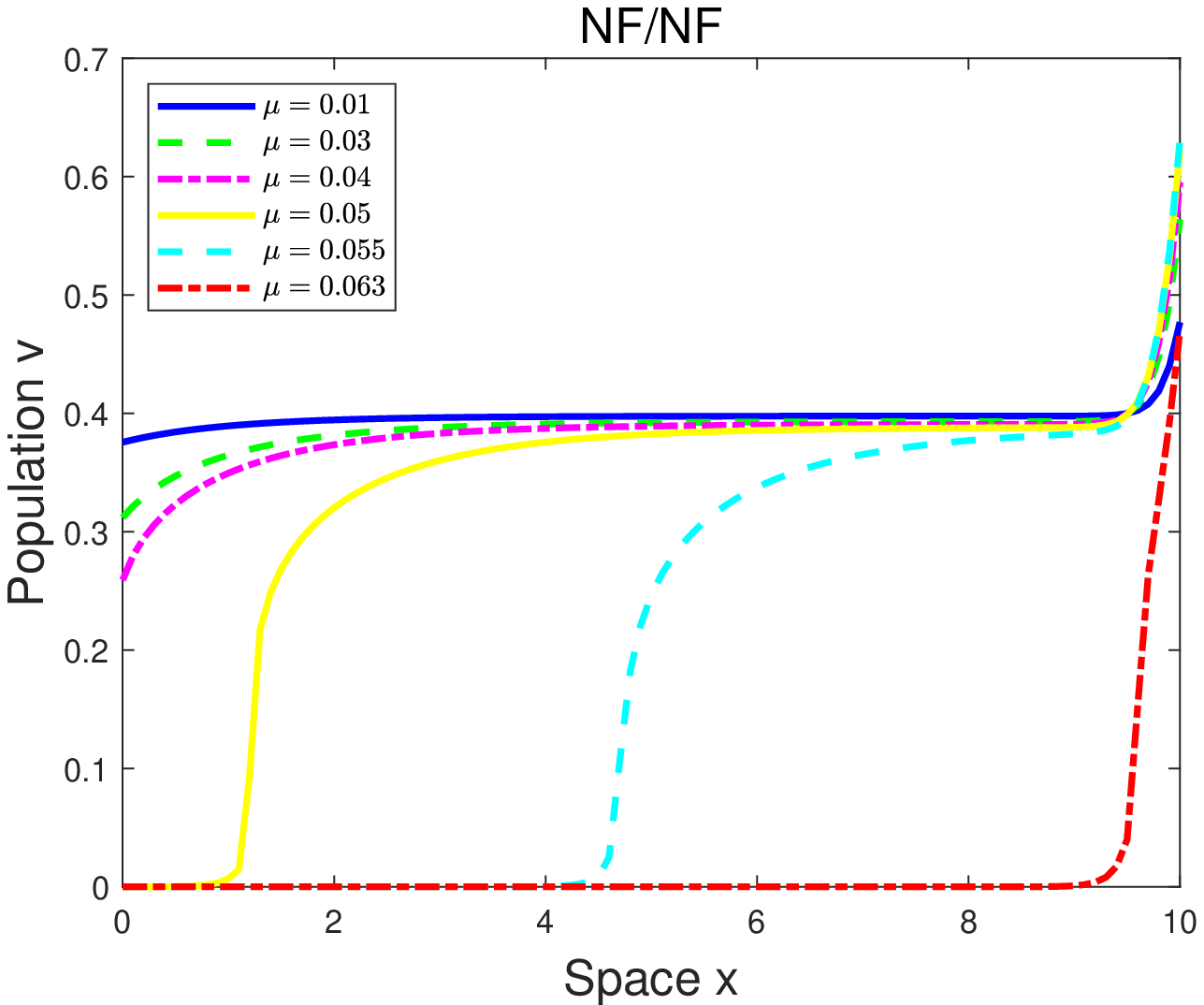}\\
\caption{The dependence of maximal steady state solution of \eqref{3s} on the varying transfer rate $\mu$ under three types boundary conditions. Here  the parameters satisfy \eqref{434},  $q=0.2$,  and the initial condition is $u_0=0.2$, $v_0=0.2$. Left: The drift population; Right: The benthic population.}
\label{3mu}
\end{figure}

\begin{figure}[htb!]
\centering
\includegraphics[width=0.5\textwidth]{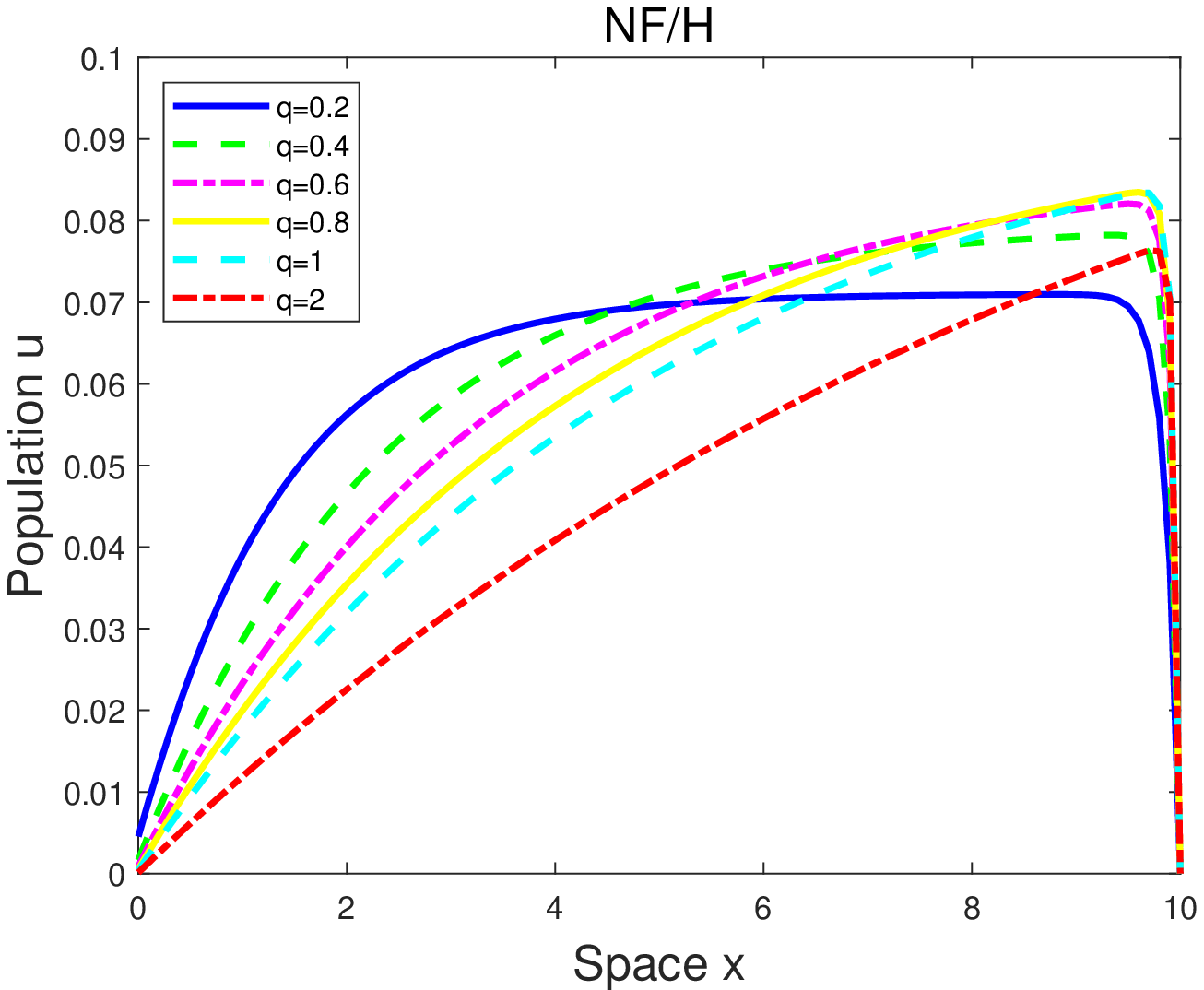}\includegraphics[width=0.5\textwidth]{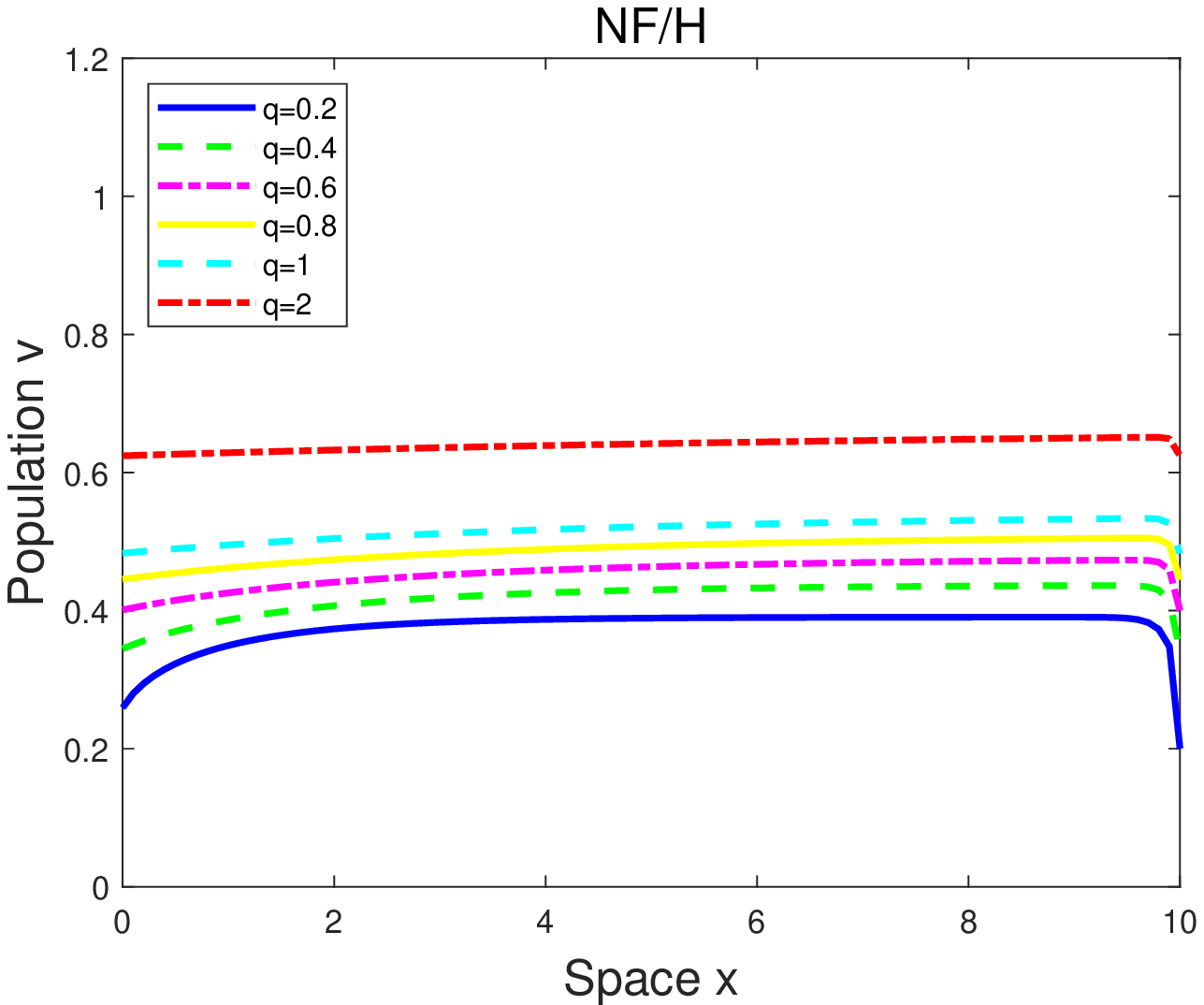}\\
\includegraphics[width=0.5\textwidth]{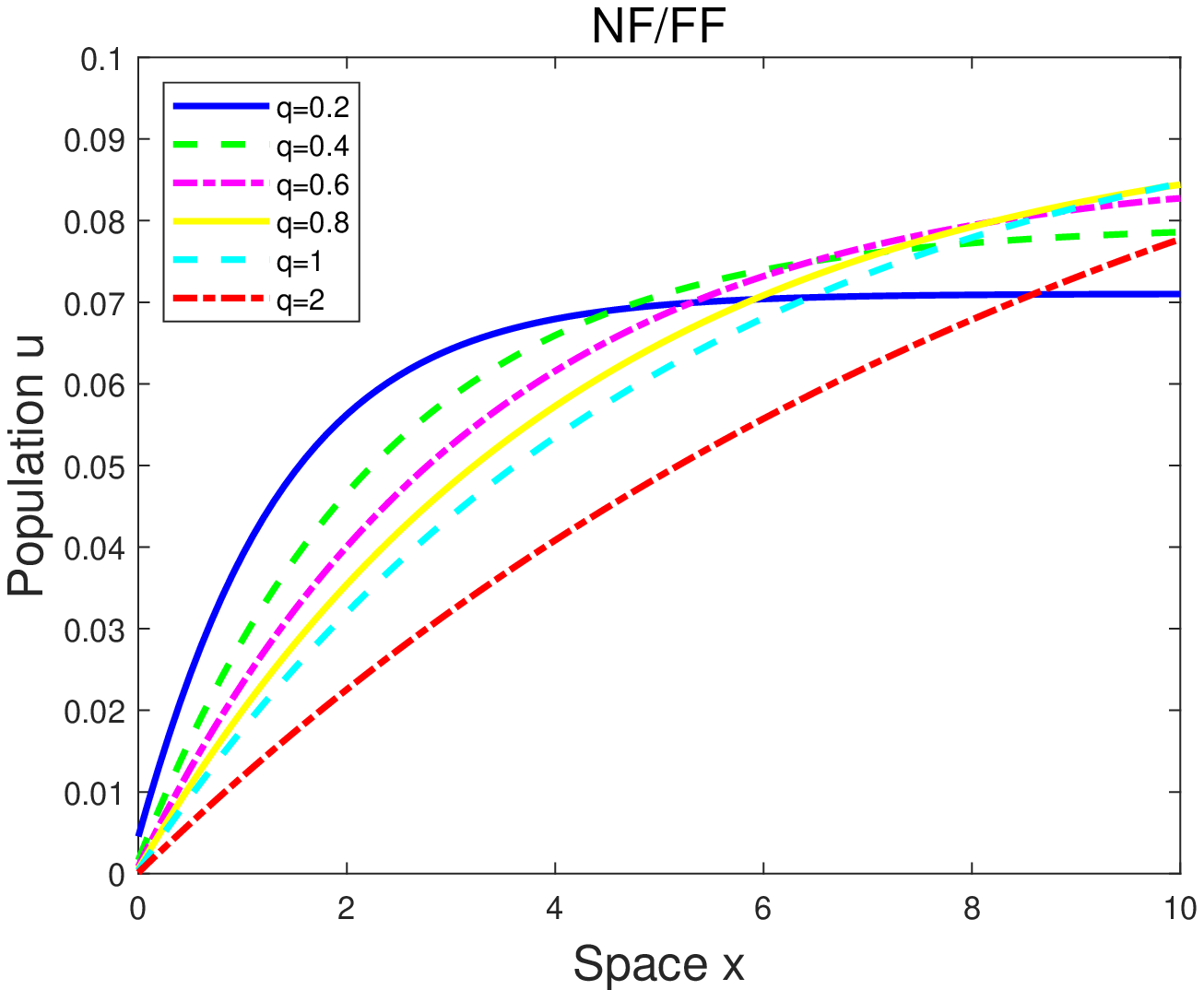}\includegraphics[width=0.5\textwidth]{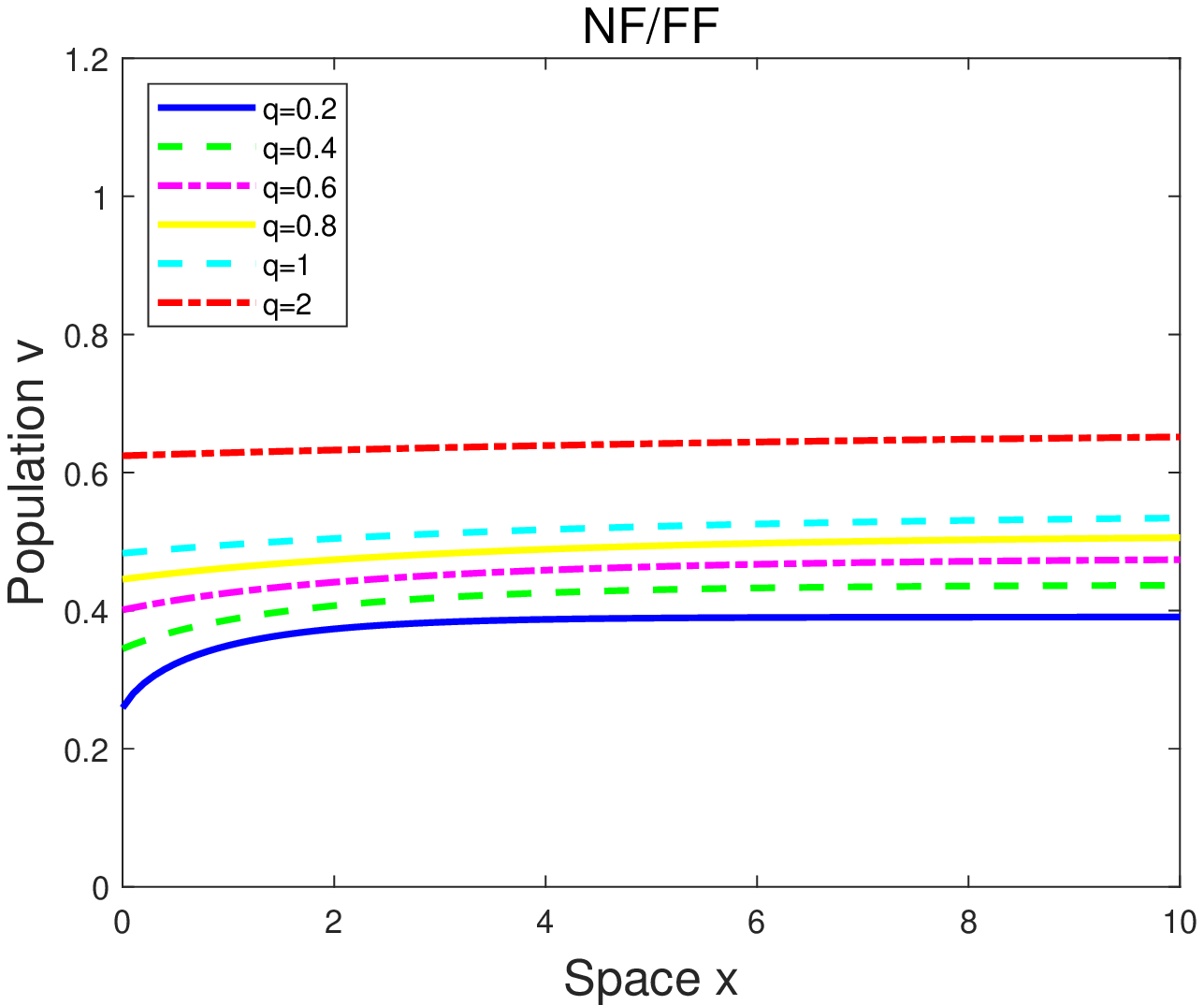}\\
\includegraphics[width=0.5\textwidth]{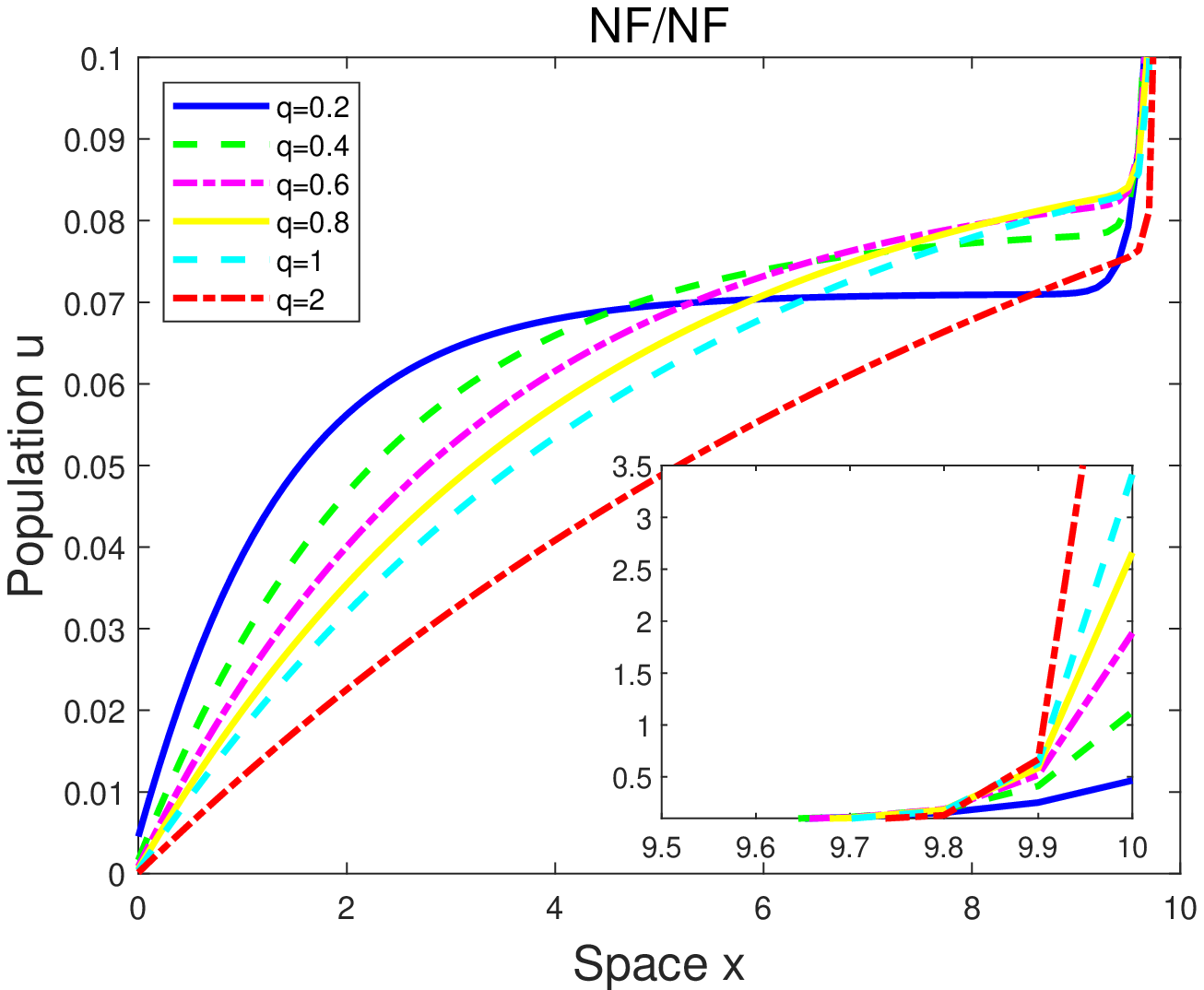}\includegraphics[width=0.5\textwidth]{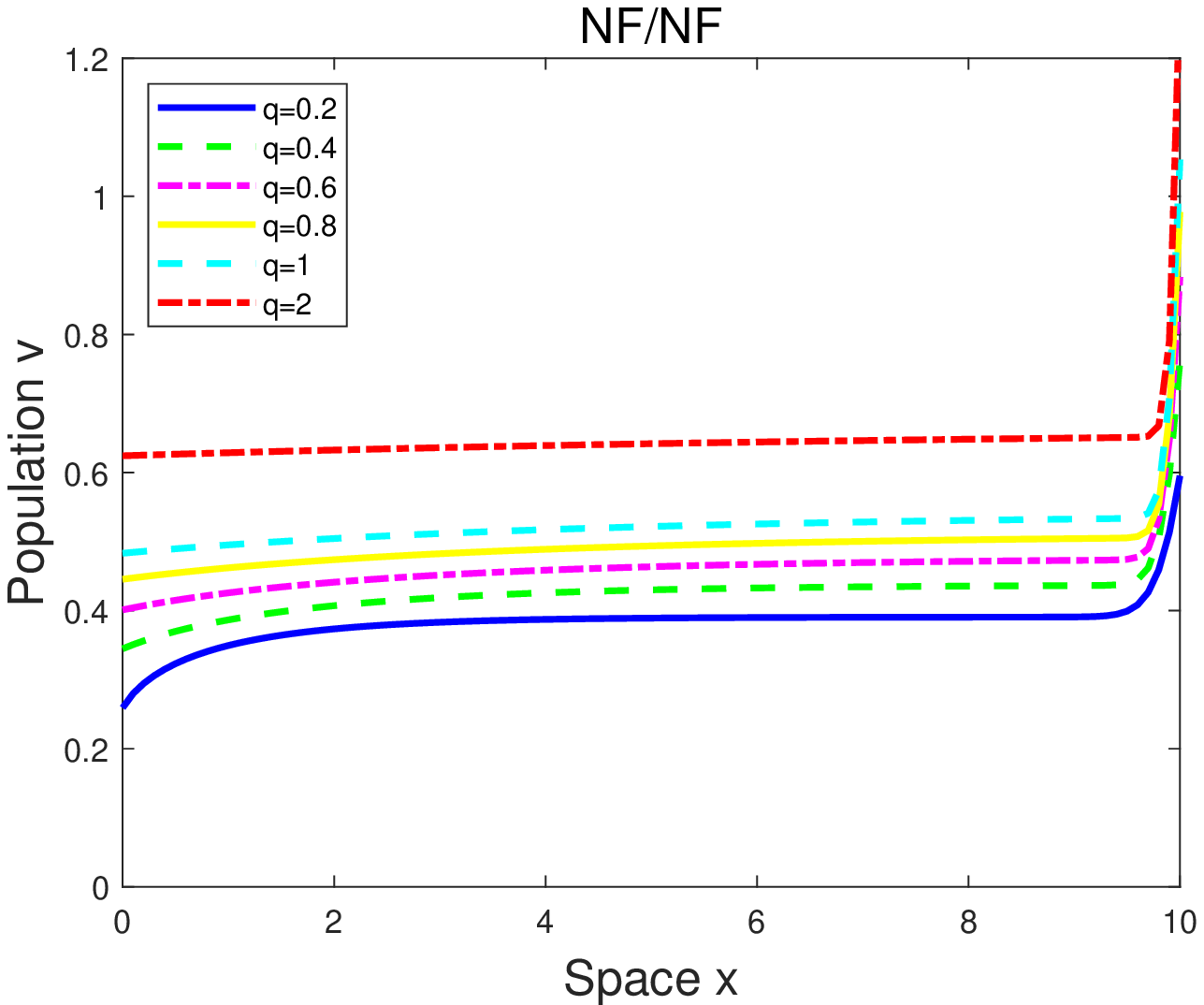}\\
\caption{The dependence of maximal steady state solution of \eqref{3s} on the varying advection rate $q$ under three types boundary conditions. Here   the parameters satisfy \eqref{434},  $\mu=0.04$, and the initial condition is $u_0=0.2$, $v_0=0.2$. Left: The drift population; Right: The benthic population.}
\label{3q}
\end{figure}

Finally Figure \ref{3bi} demonstrates the bistable nature of system \eqref{3s} under NF/FF ($b_u=0$ and $b_d=1$) boundary condition and $(H3)$ is satisfied, and Figure \ref{3bi2} describes the bistable phenomenon under NF/NF ($b_u=b_d=0$) boundary condition and $(H2)$  is satisfied. And when the cross-sectional areas of the benthic zone $A_b(x)$
and drift zone $A_b(x)$ are spatially heterogeneous, then the bistable structure is shown in Figure \ref{3bi3}. The population becomes extinct when  starting from small initial population  (first panel in Figure \ref{3bi},  \ref{3bi2} and \ref{3bi3}); and the population reaches the maximal steady state when starting from relatively large initial population (third panel in Figure \ref{3bi}, \ref{3bi2} and \ref{3bi3}). And the second panel in Figure \ref{3bi},  \ref{3bi2} and \ref{3bi3} also shows a ``stable" pattern with a transition layer. We conjecture that the transition layer solution is unstable and metastable (with a small positive eigenvalue), so the pattern can be observed for a long time in numerical simulation.

\section{Conclusion}
For a aquatic species that reproduce on the bottom of the river and release their larval stages into the water column, the longitudinal movement occurs only in the drift zone and individuals in the benthic zone in stream channel stays immobile. Through a benthic-drift model, we investigated the population persistence and extinction regarding the strength of interacting between zones. Moreover, this benthic-drift model has the feature of a coupled partial differential equation (PDE) for the drift population and an ``ordinary differential equation" (ODE) for the benthic population. This degenerate model causes a lack of the compactness of the solution orbits, which brings extra obstacles in the analysis. To overcome these difficulties, we turn to the Kuratowski measure of noncompactness in order to use the Lyapunov function.

For single compartment reaction-diffusion-advection equation, when the growth rate exhibits logistic type, it is well-known that the dynamics is either the population extinction or convergence to a positive steady state (monostable). If the species follows the strong Allee effect growth, when both the diffusion coefficient and the advection rate are small, there exist multiple positive steady state solutions hence the dynamics is bistable so that different initial conditions lead to different asymptotic behavior. On the other hand, when the advection rate is large, the population becomes extinct regardless of initial condition under most boundary conditions \cite{ws2018}.

Unlike the single compartment reaction-diffusion-advection equation with a strong Allee effect growth rate, in which the advection rate $q$ plays a important role in the persistence/extinction dynamics, the benthic-drift model dynamics with strong Allee effect relies more critically on the strength of interacting between zones, especially the transfer rate from the benthic zone to the drift zone $\mu$. In this paper, we show that how the transfer rates between benthic zone to the drift zone influence the population dynamics. We divided the $\mu$ (transfer rate from benthic zone to drift zone) and $\sigma$ (transfer rate from drift zone to benthic zone) phase plane into regions and studied the dynamical behavior on these parameter regions. When we have a relatively large $\mu$ (in H1), population extinction will happen independent of the initial conditions, the boundary condition, the diffusive and advective movement and the transfer rate from the drift zone to the benthic zone $\sigma$. For small $\mu$ (in (H3)), for large initial conditions, population persistence will happen regardless of the boundary condition, the diffusive and advective movement and the transfer rate from the drift zone to the benthic zone $\sigma$. Along with the locally stability of the zero steady state solution, bistable dynamical behavior can be confirmed. When the transfer rate $\mu$ is in the intermediate range (in (H2)), the persistence or extinction depends on the diffusive and advective movement. And under closed environment, a multiplicity result for the steady state solutions is also obtained for small advection rate.

\begin{figure}[h]
\centering
\includegraphics[width=0.5\textwidth]{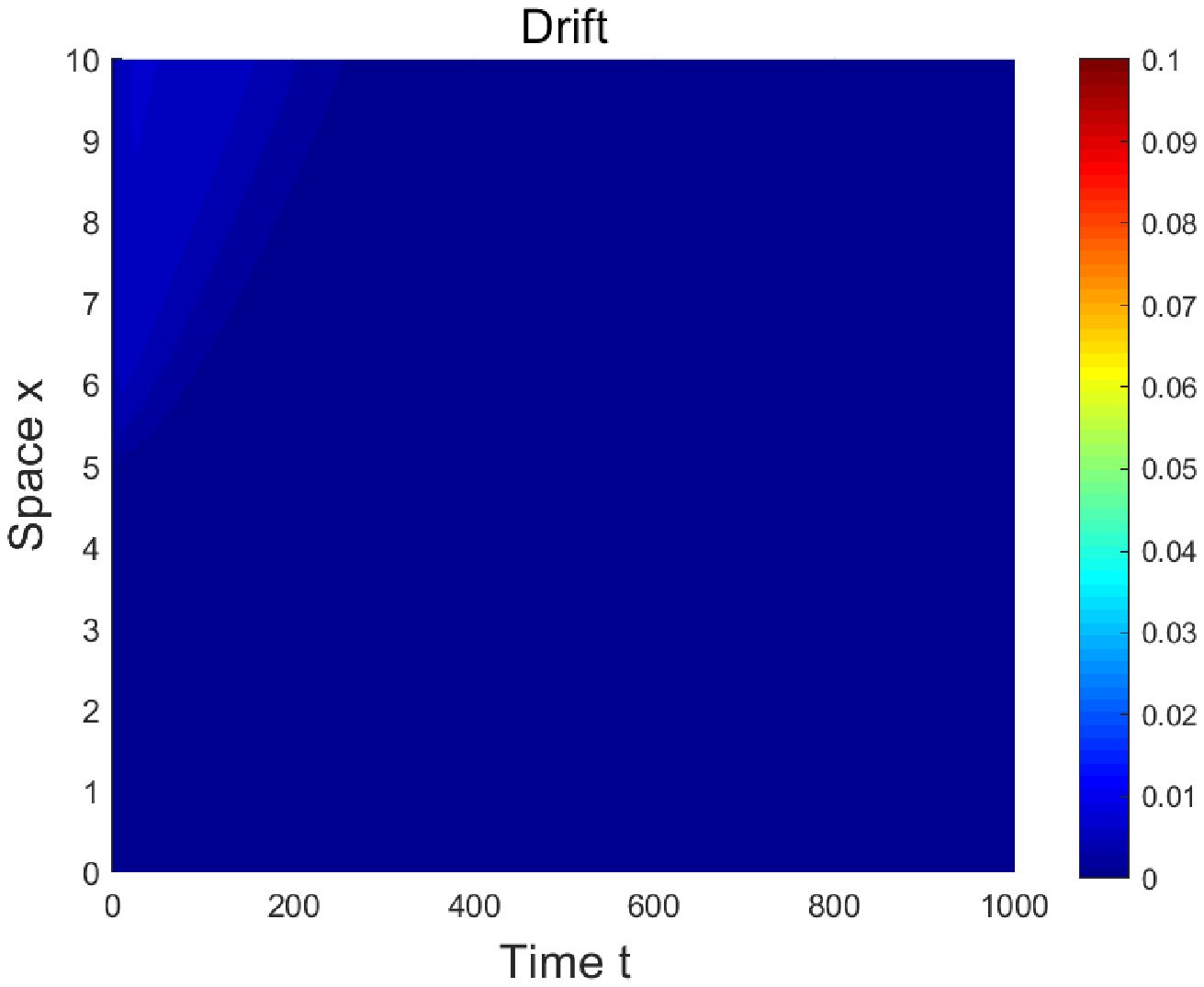}\includegraphics[width=0.5\textwidth]{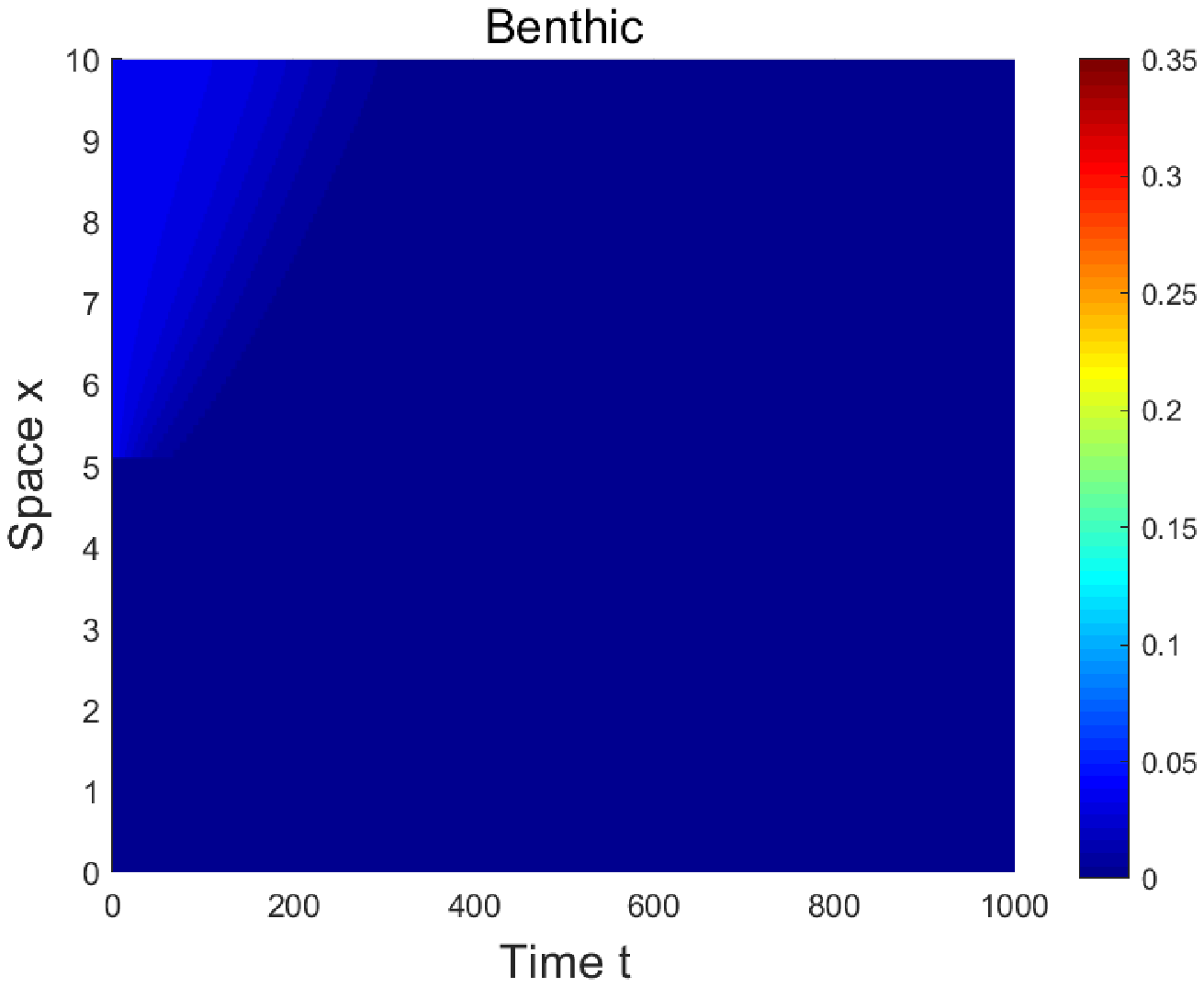}\\
\includegraphics[width=0.5\textwidth]{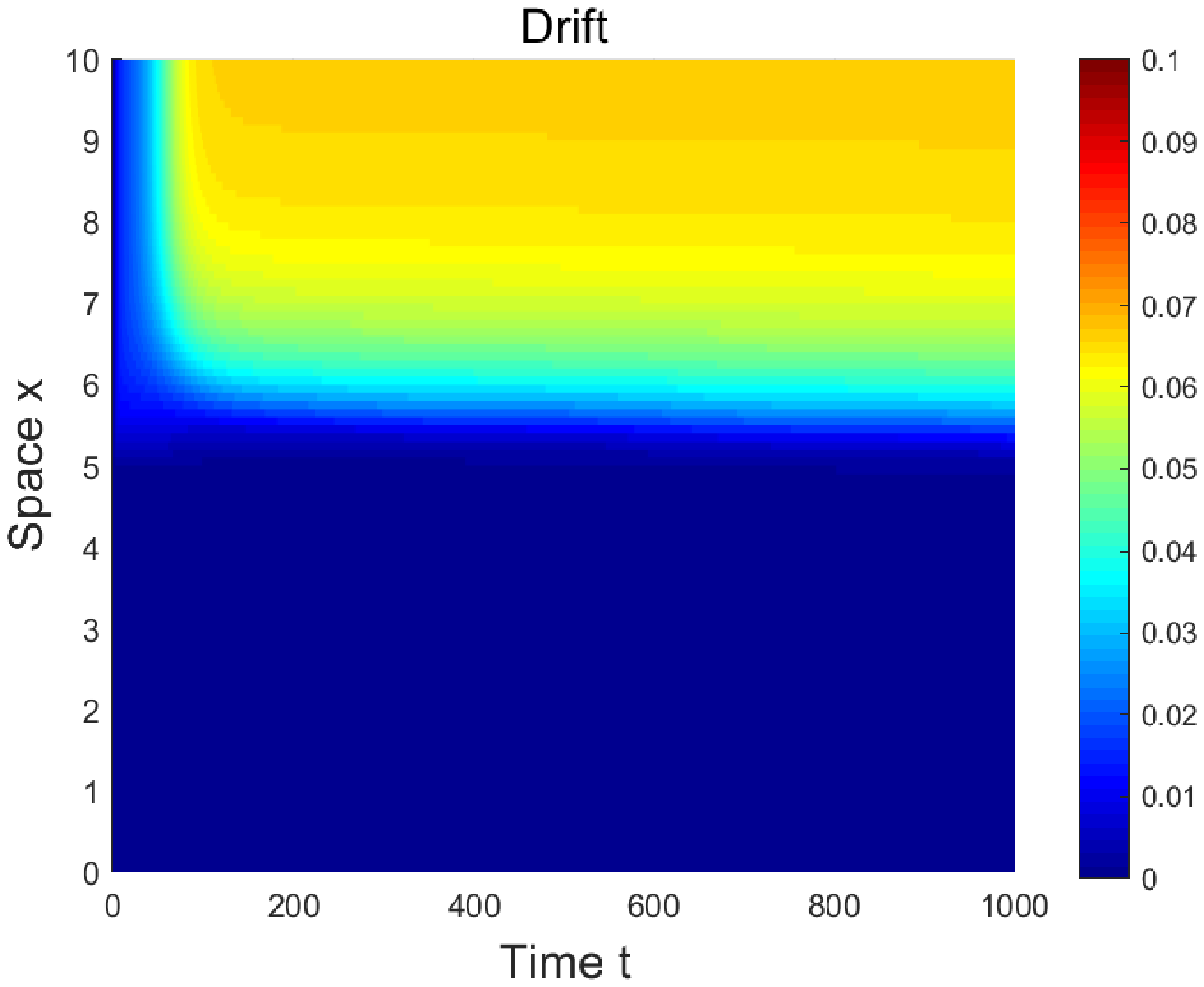}\includegraphics[width=0.5\textwidth]{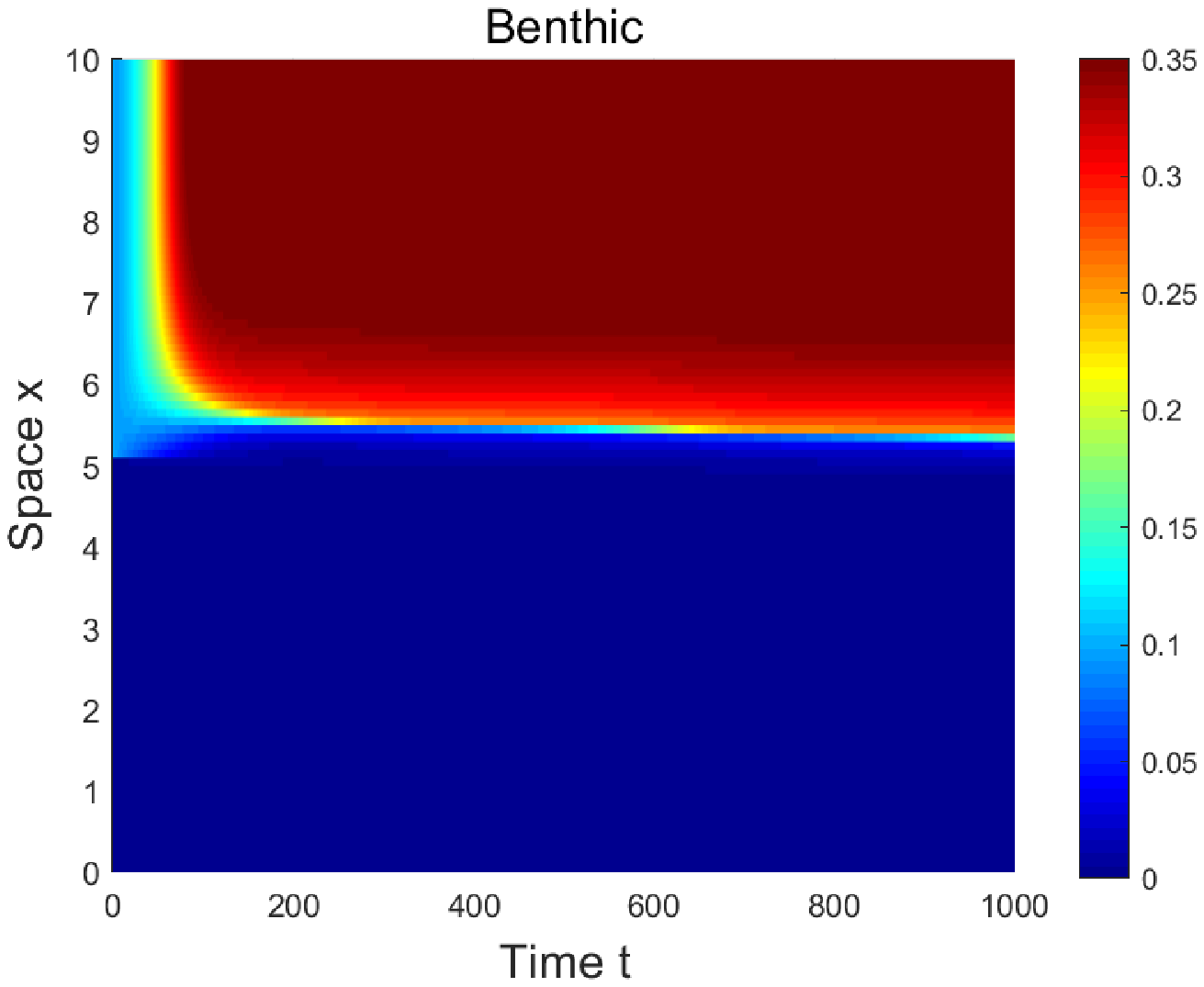}\\
\includegraphics[width=0.5\textwidth]{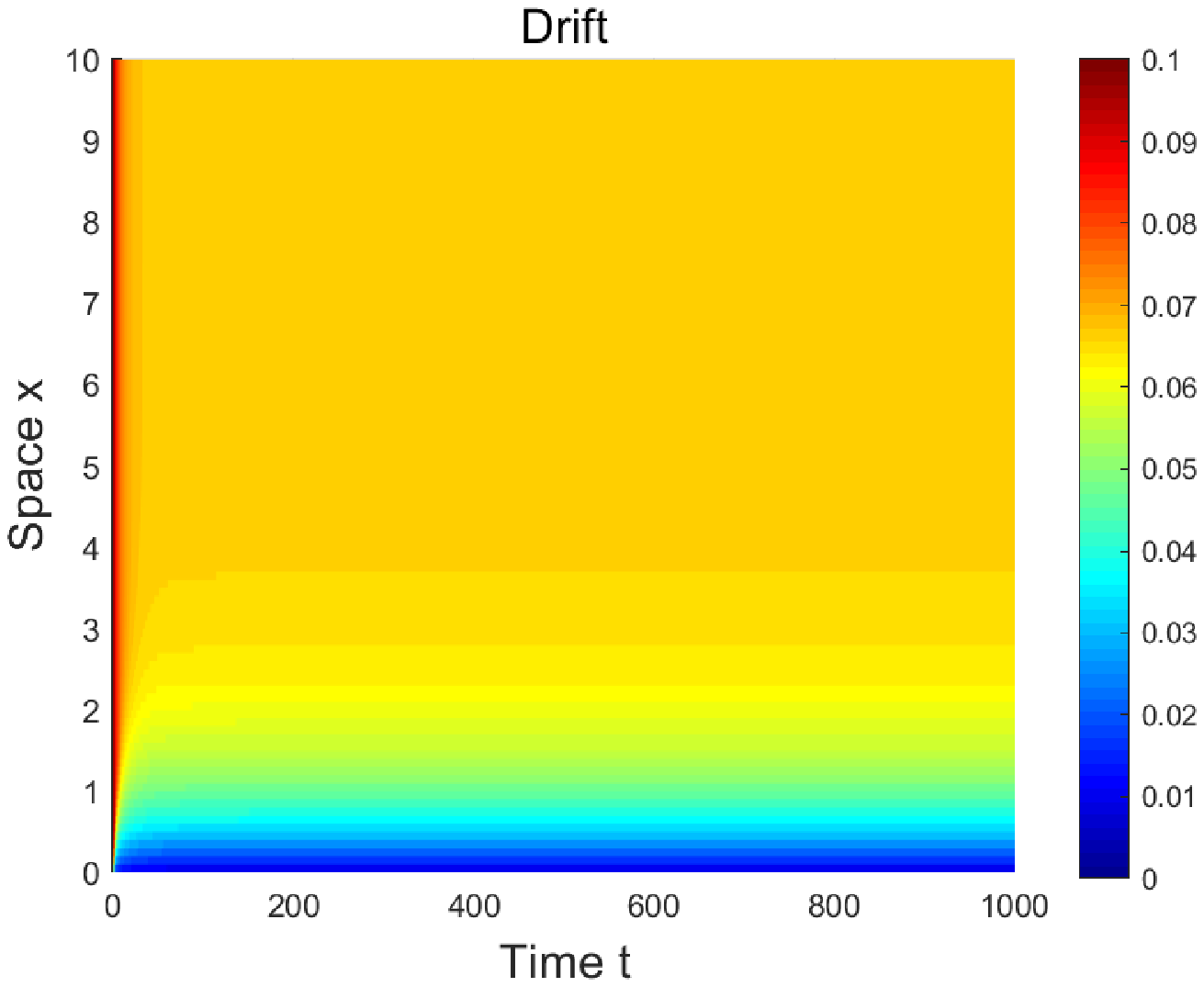}\includegraphics[width=0.5\textwidth]{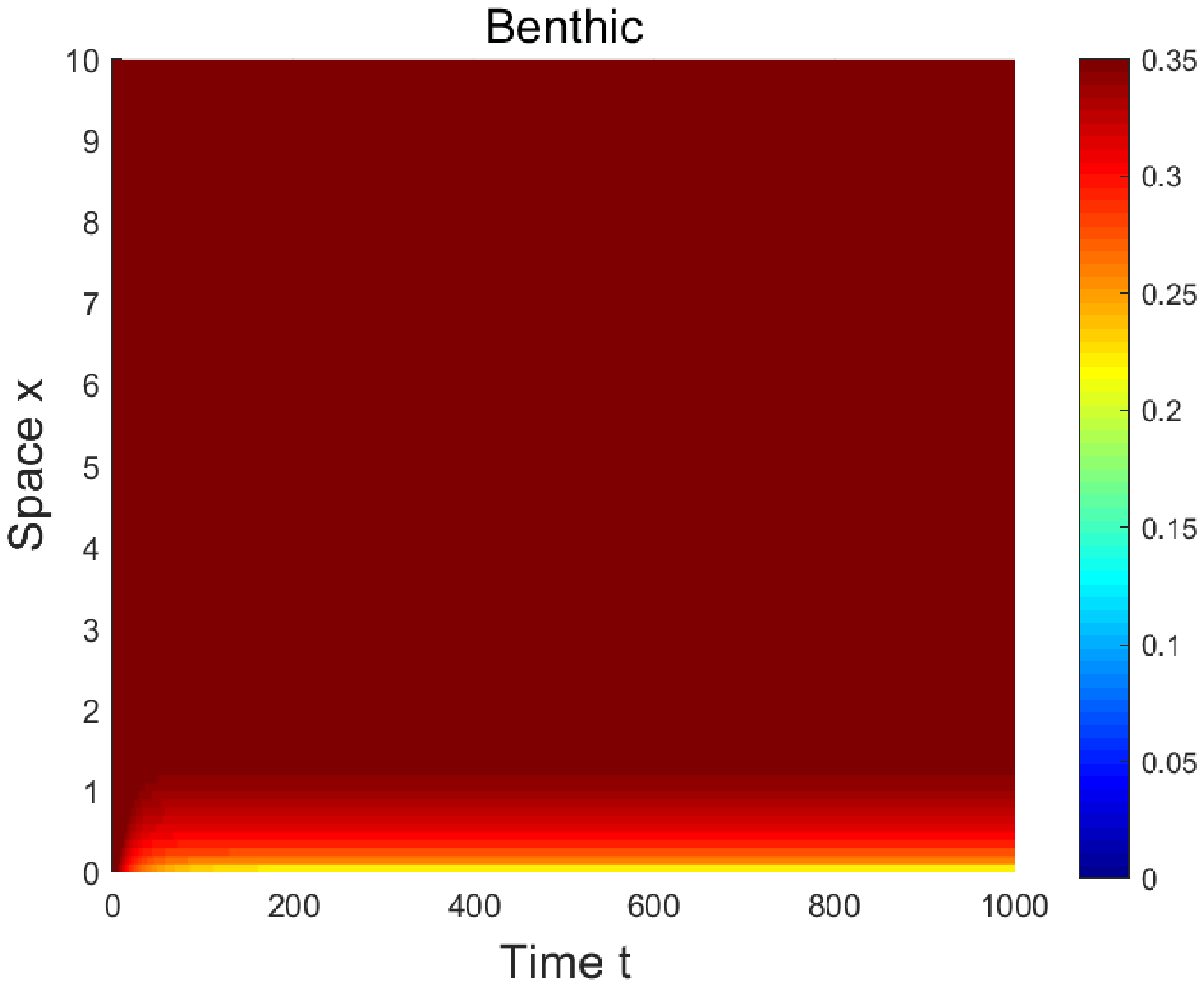}\\
\caption{Bistable dynamics for different initial conditions. Here  the parameters satisfy \eqref{434}, $q=0.11$,  $\mu=0.04$, $b_u=0$, $b_d=1$. The initial conditions from first row to thrid row are $u_0(x)=0$, $v_0(x)=0$ for $x\in[0, L/2]$ and $v_0(x)=0.04$ for $x\in[L/2, L]$; $u_0(x)=0$, $v_0(x)=0$ for $x\in[0, L/2]$ and $v_0(x)=0.1$ for $x\in[L/2, L]$; $u_0(x)=0.1$, $v_0(x)=0$ for $x\in[0, L/2]$ and $v_0(x)=0.1$ for $x\in[L/2, L]$; $u_0(x)=0.1$, $v_0(x)=0.4$. Left: the drift population; Right: the benthic population.}
\label{3bi}
\end{figure}

\begin{figure}[h]
\centering
\includegraphics[width=0.5\textwidth]{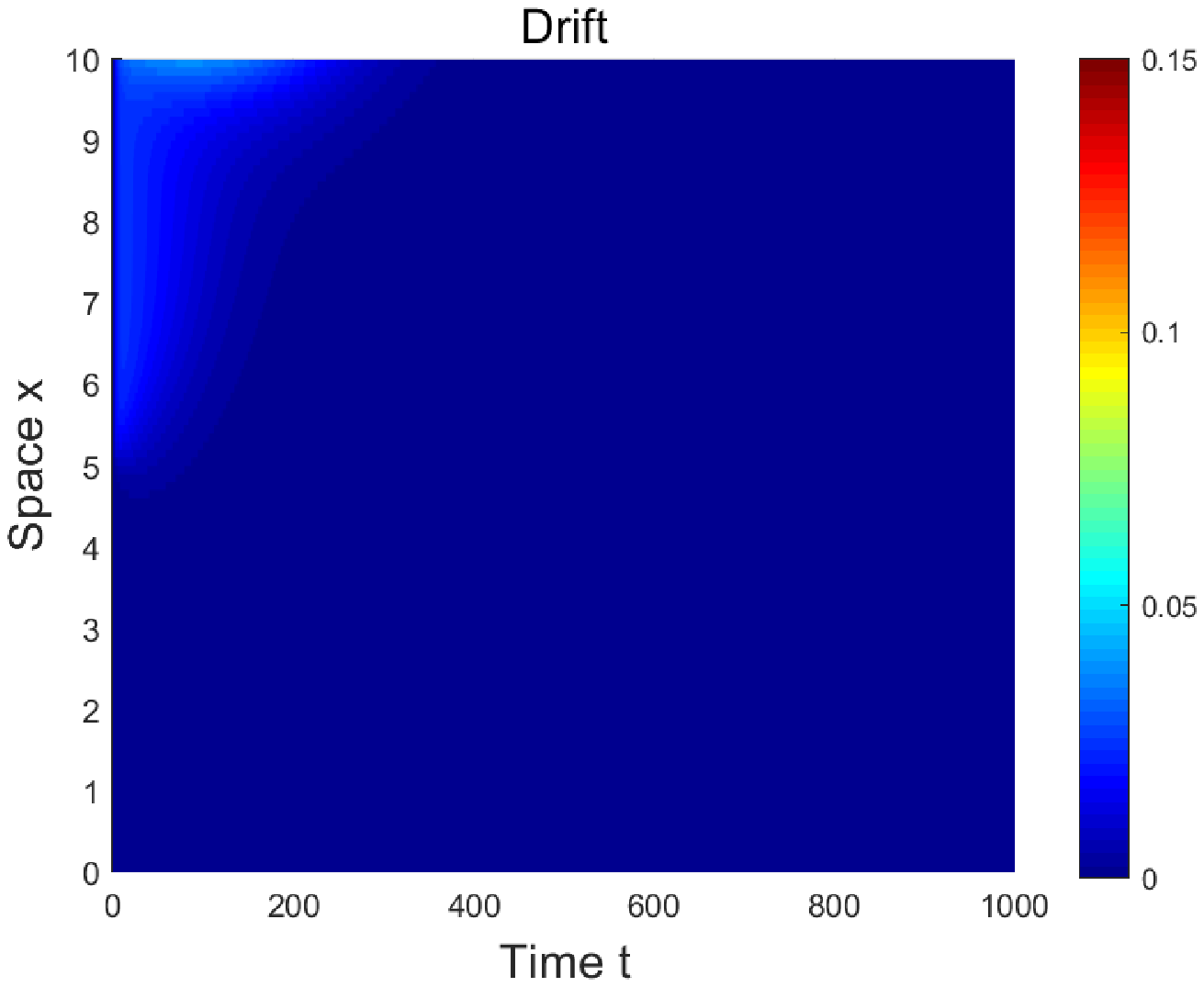}\includegraphics[width=0.5\textwidth]{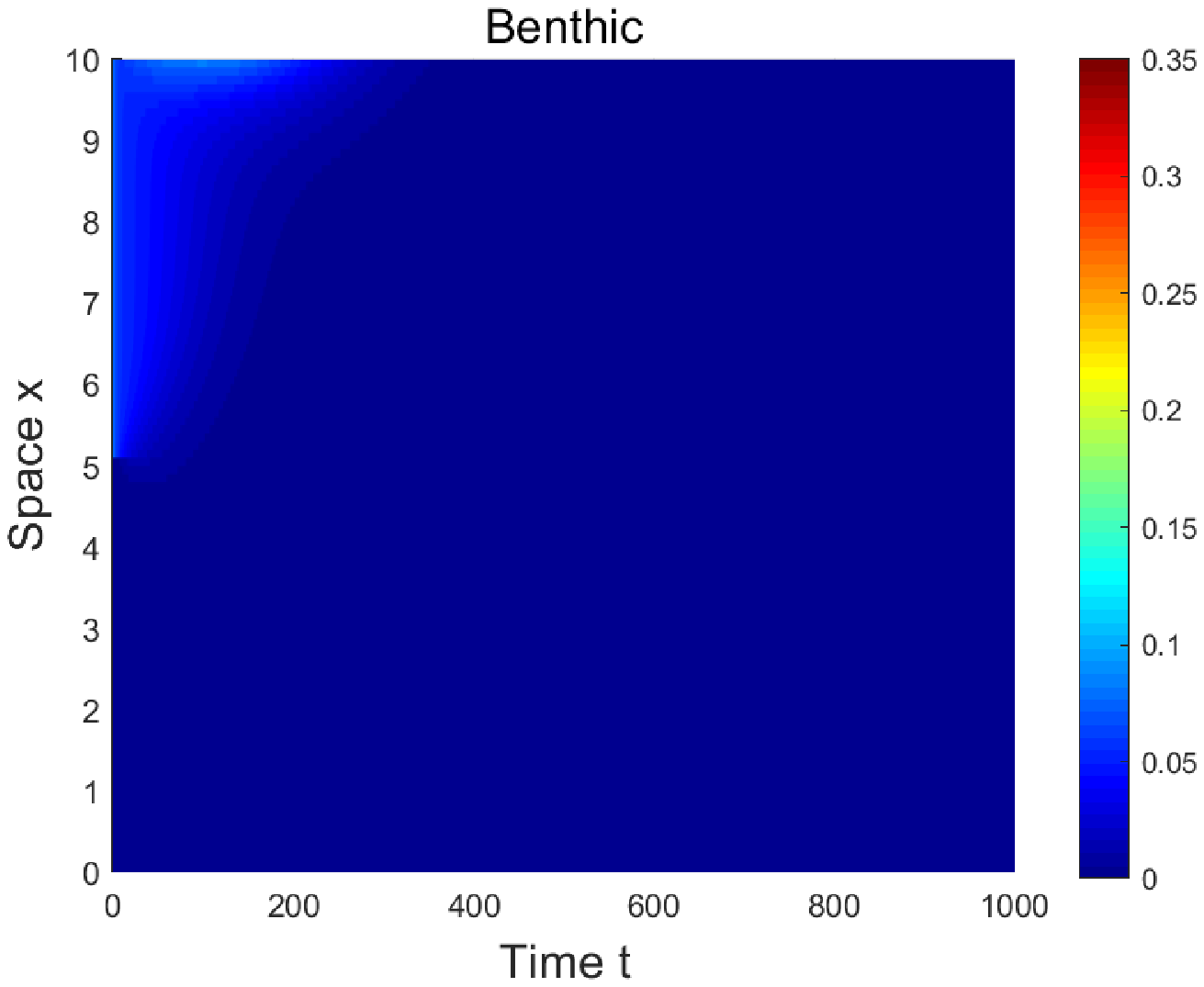}\\
\includegraphics[width=0.5\textwidth]{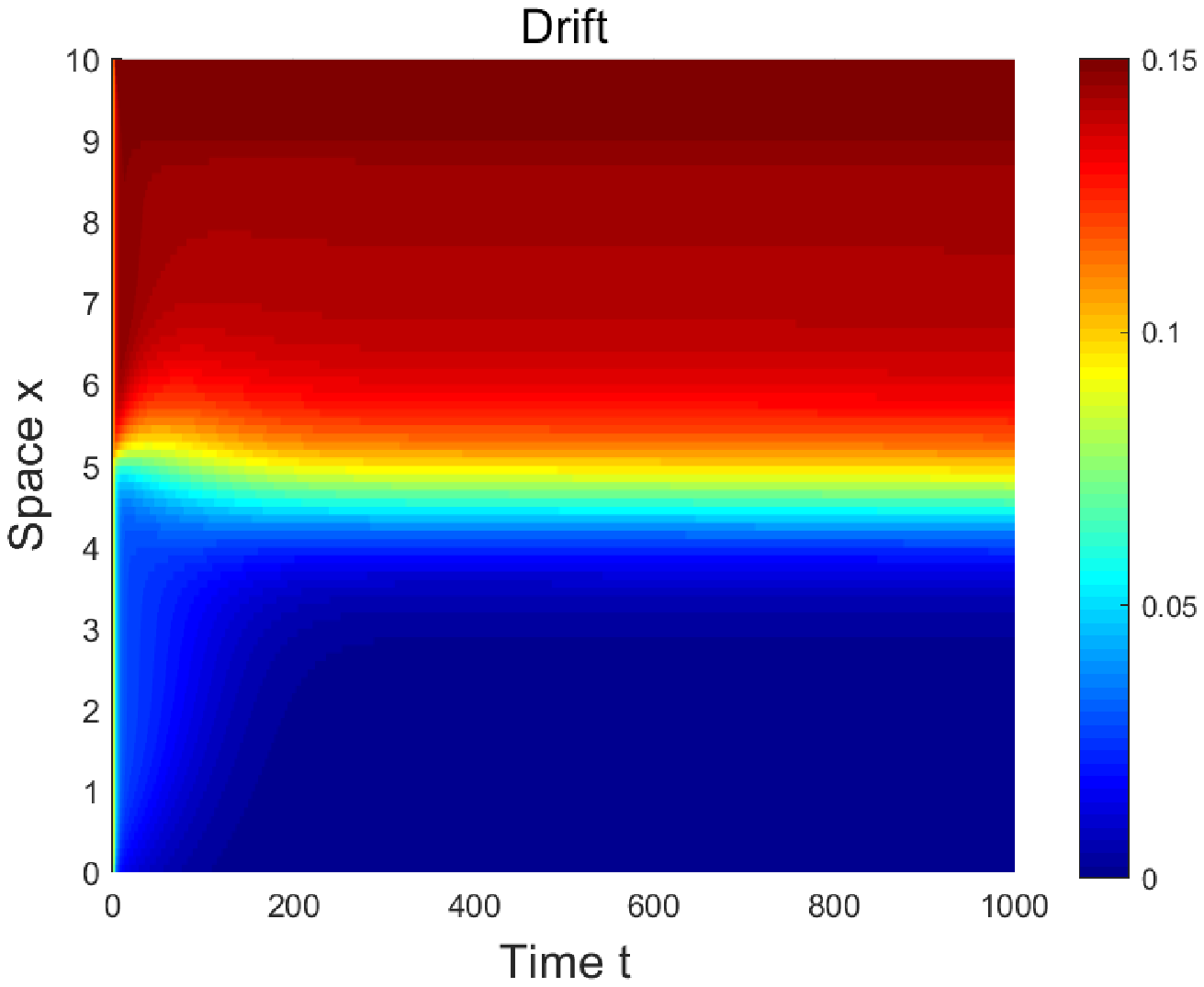}\includegraphics[width=0.5\textwidth]{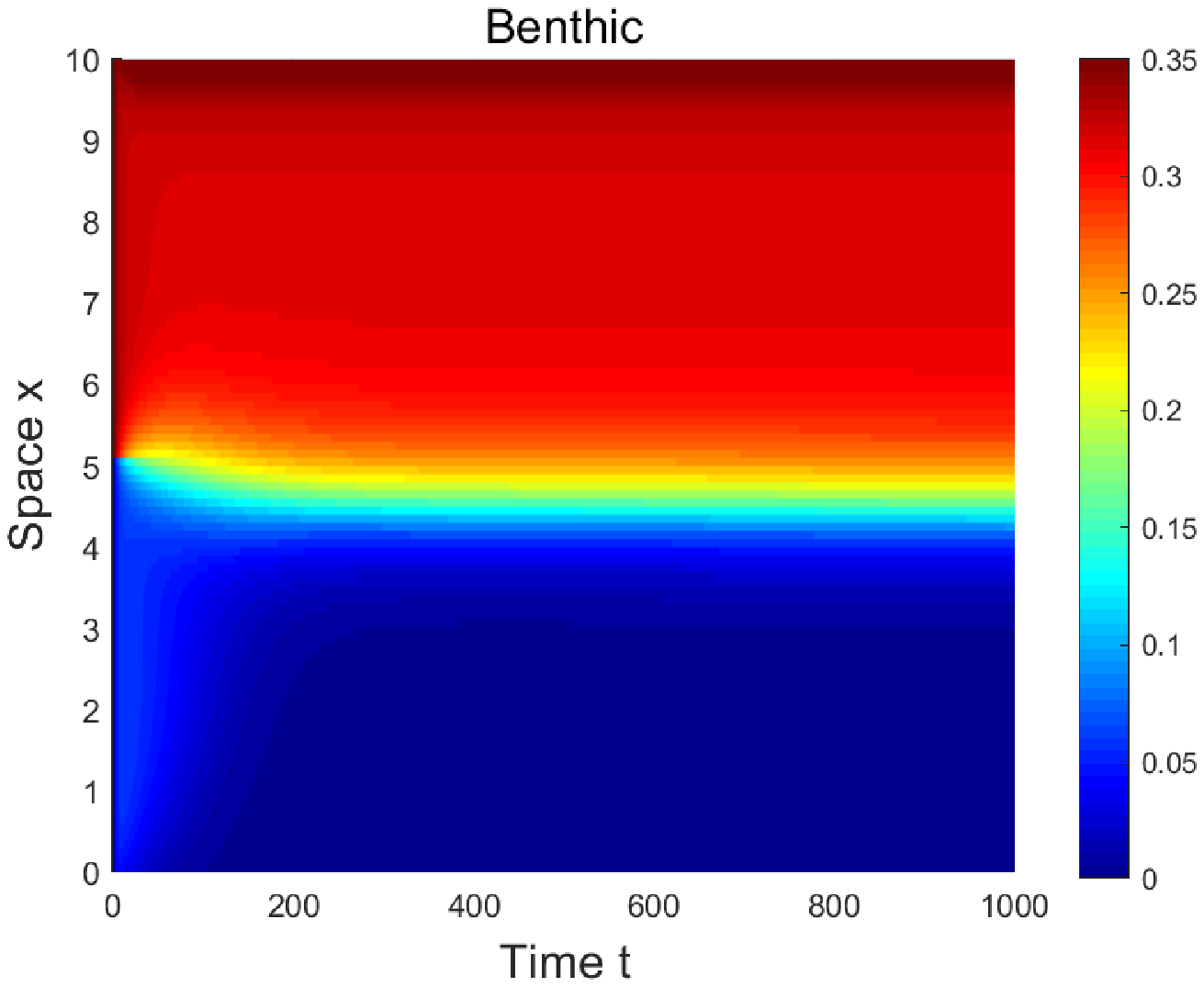}\\
\includegraphics[width=0.5\textwidth]{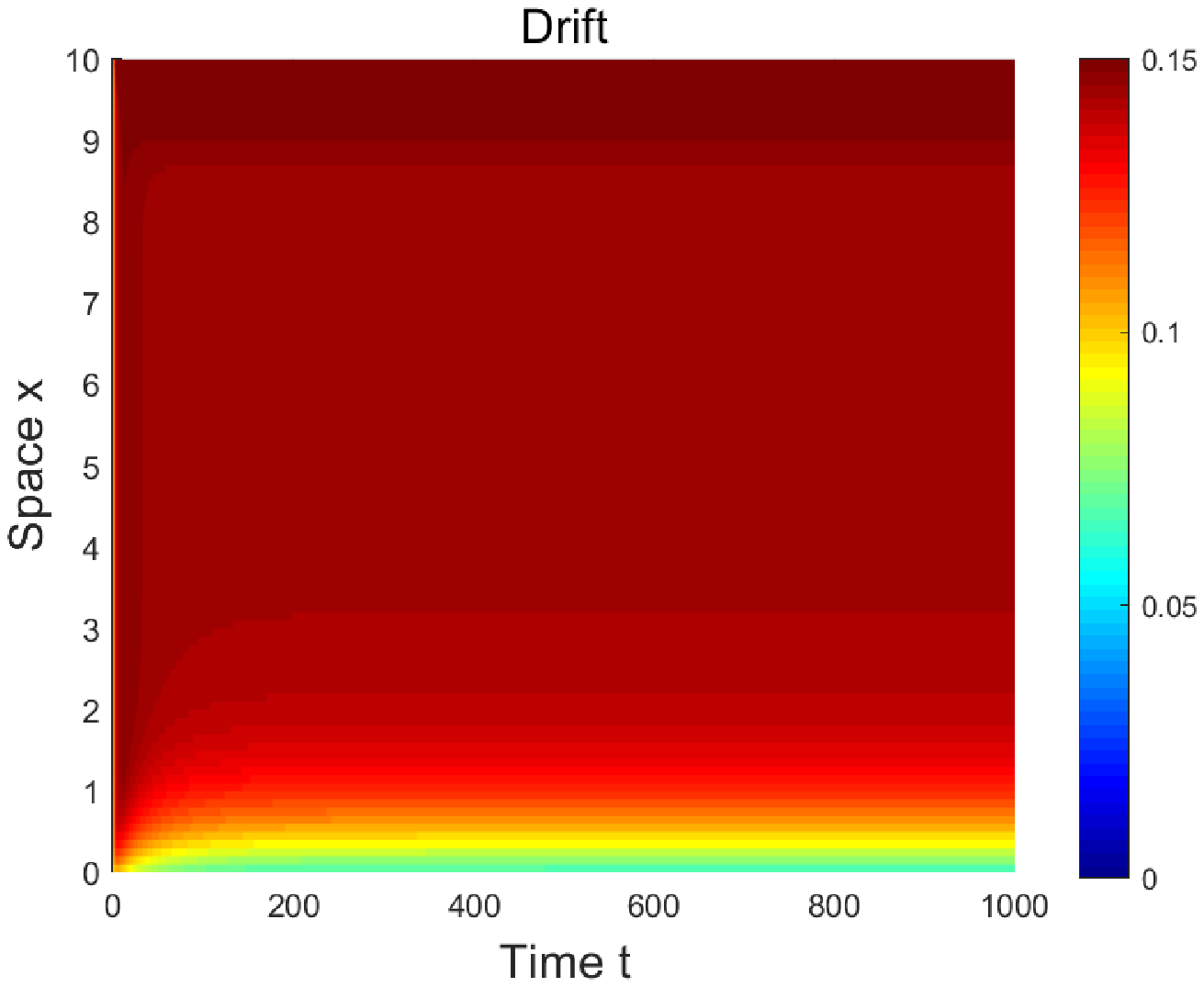}\includegraphics[width=0.5\textwidth]{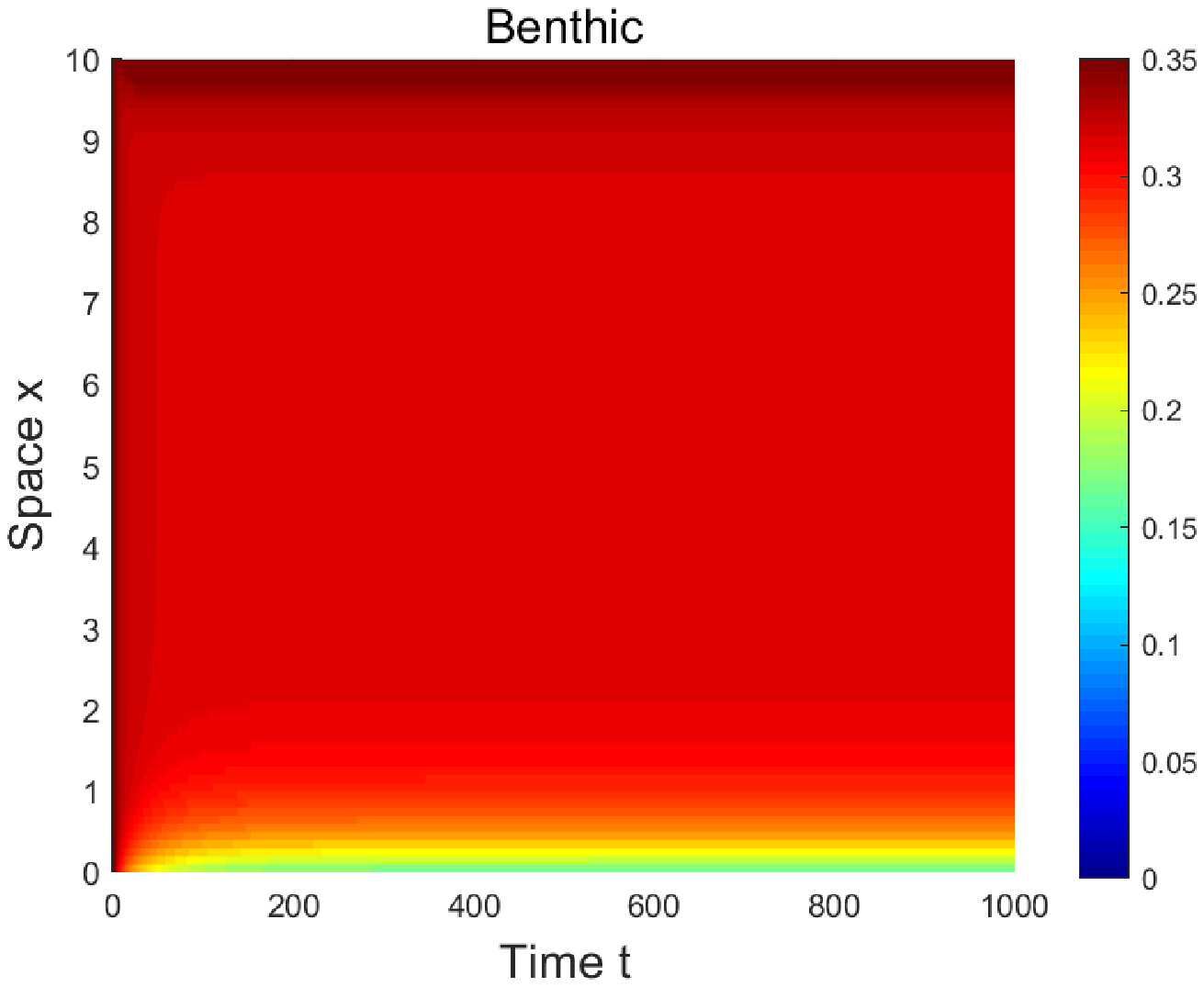}\\
\caption{Bistable dynamics for different initial conditions. Here  the parameters satisfy \eqref{434}, $q=0.025$, $\mu=0.1$, $b_u=b_d=0$. The initial conditions from first row to third row are $u_0(x)=0$, $v_0(x)=0$ for $x\in[0, L/2]$ and $v_0(x)=0.08$ for $x\in[L/2, L]$; $u_0(x)=0.1$, $v_0(x)=0$ for $x\in[0, L/2]$ and $v_0(x)=0.4$ for $x\in[L/2, L]$; $u_0(x)=0.1$, $v_0(x)=0.4$. Left: the drift population; Right: the benthic population.}
\label{3bi2}
\end{figure}

\begin{figure}[h]
\centering
\includegraphics[width=0.5\textwidth]{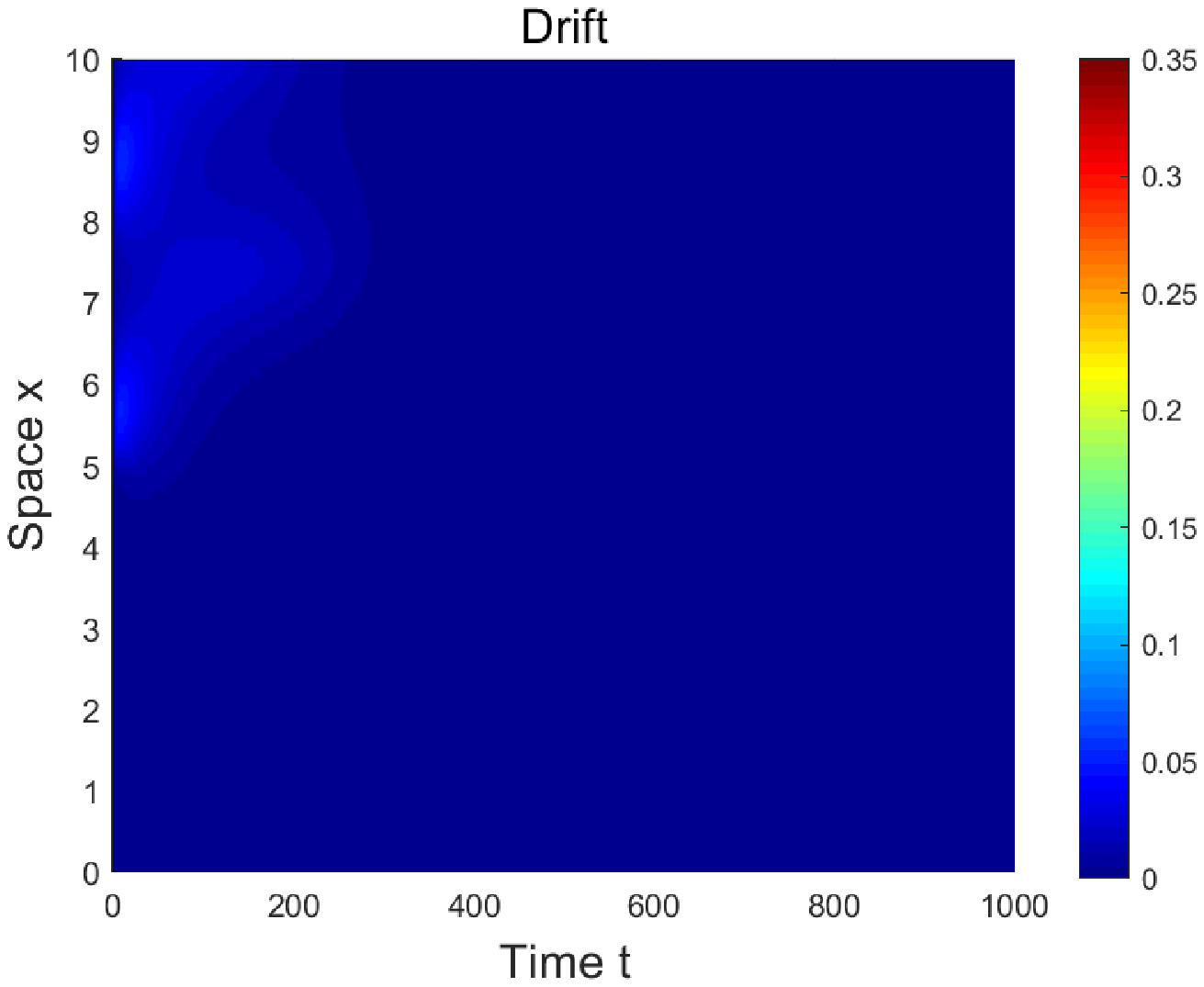}\includegraphics[width=0.5\textwidth]{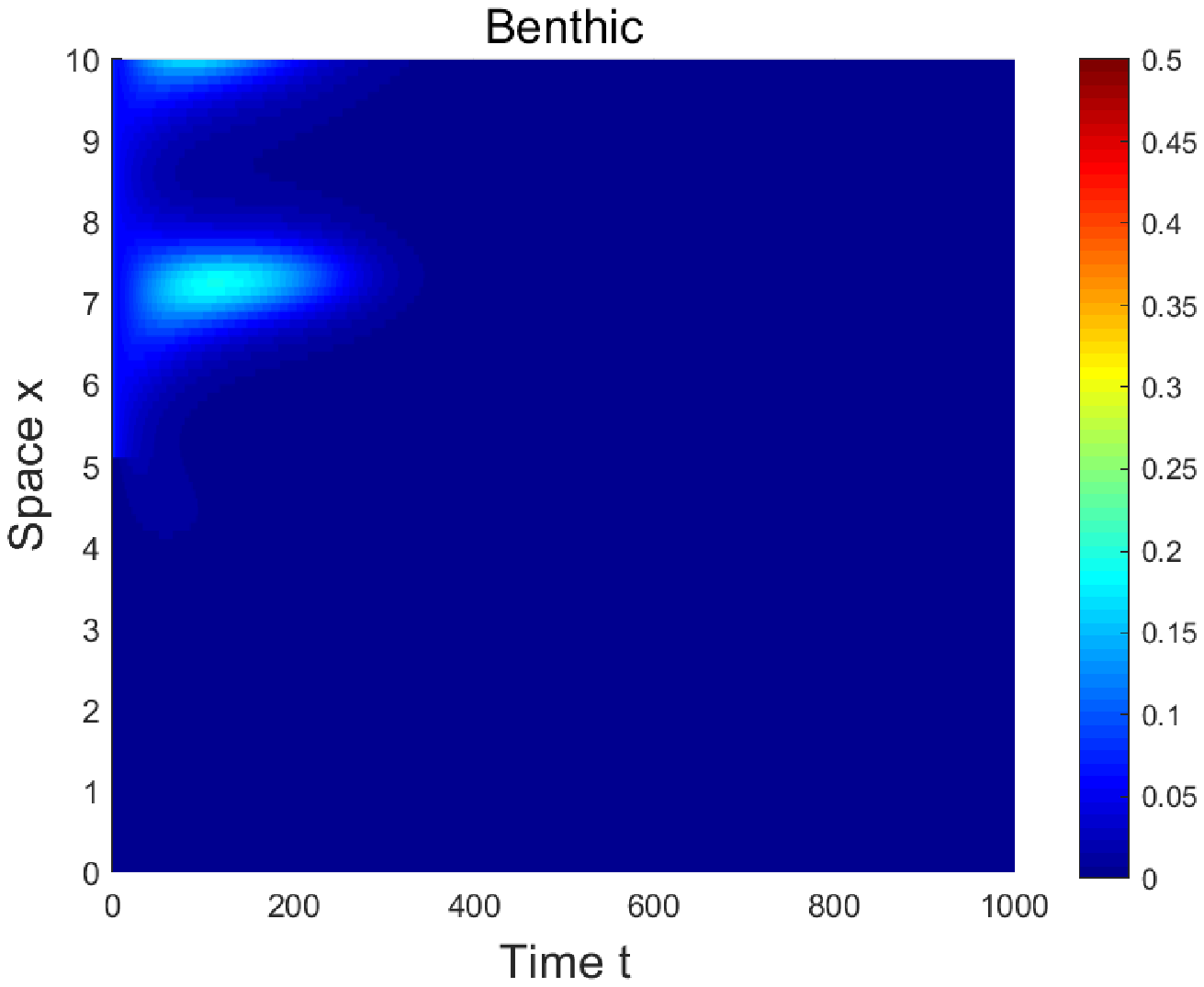}\\
\includegraphics[width=0.5\textwidth]{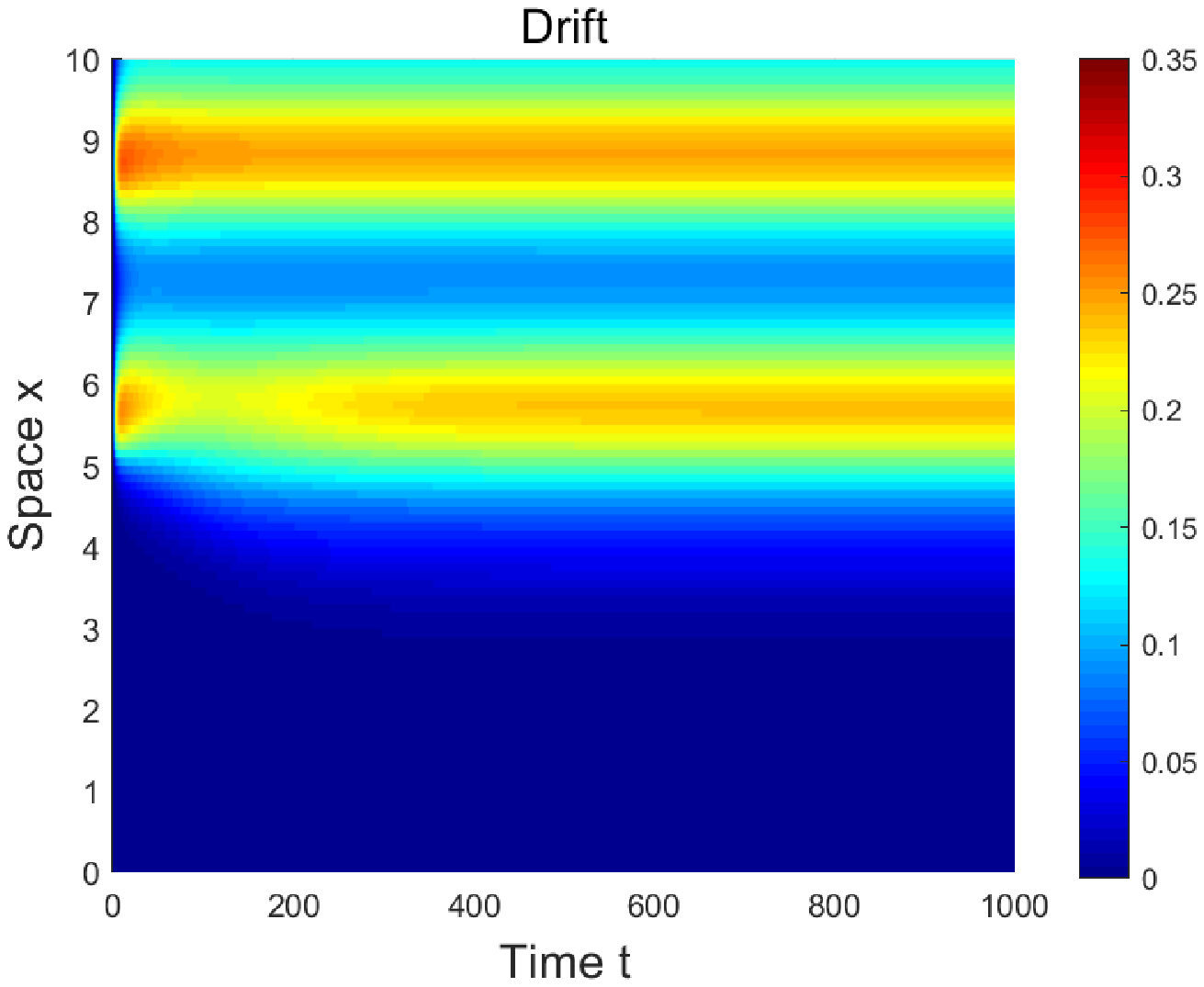}\includegraphics[width=0.5\textwidth]{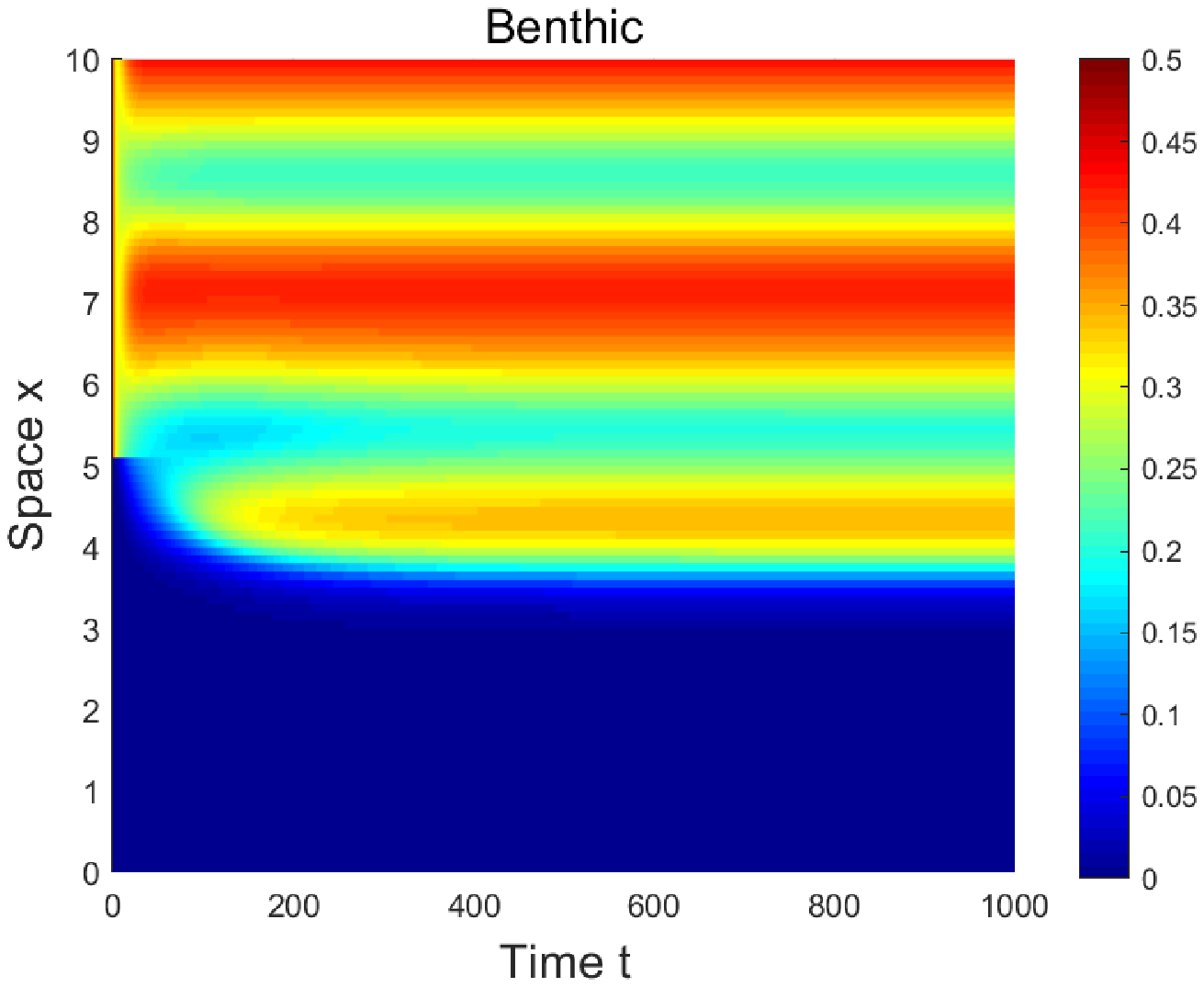}\\
\includegraphics[width=0.5\textwidth]{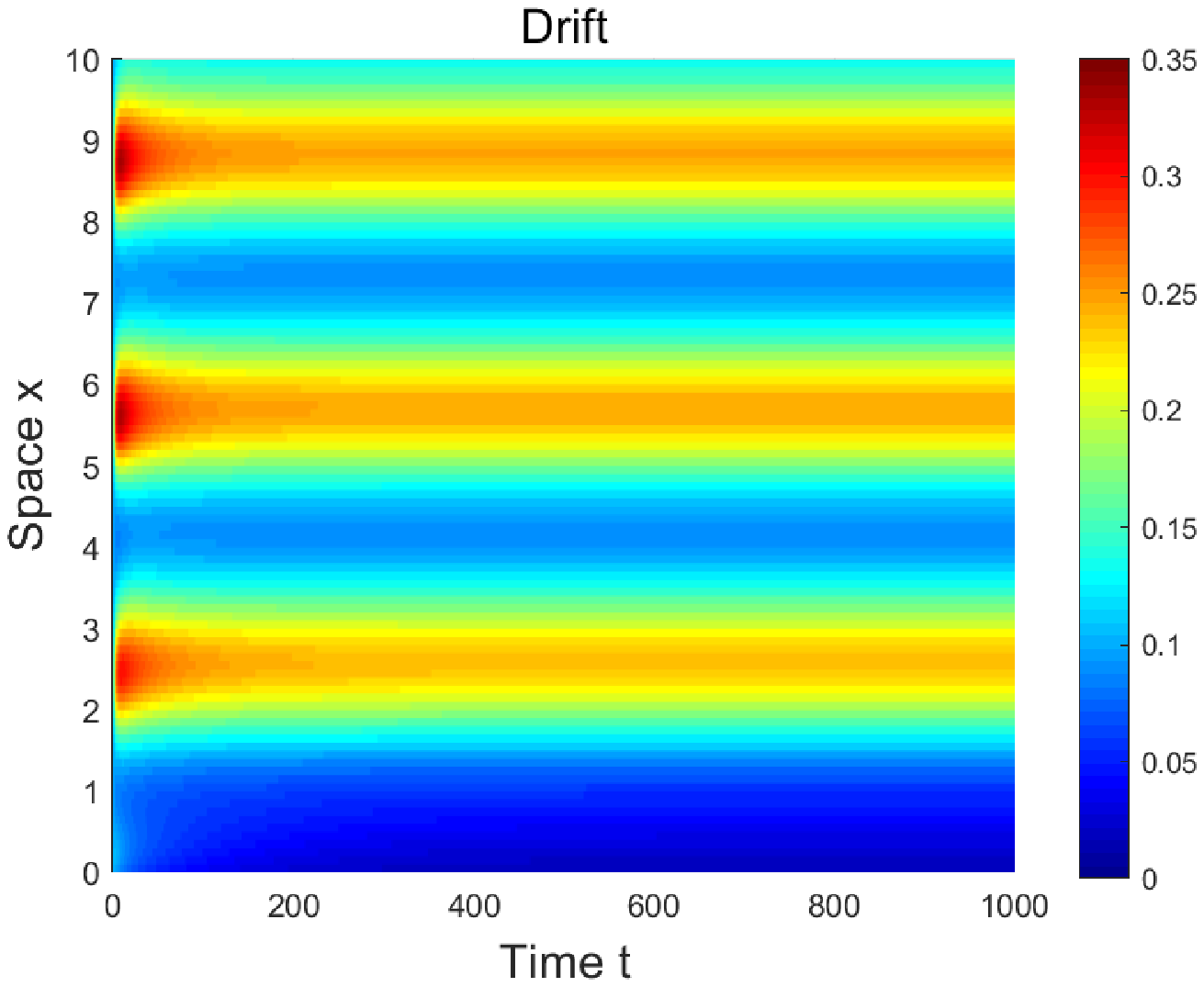}\includegraphics[width=0.5\textwidth]{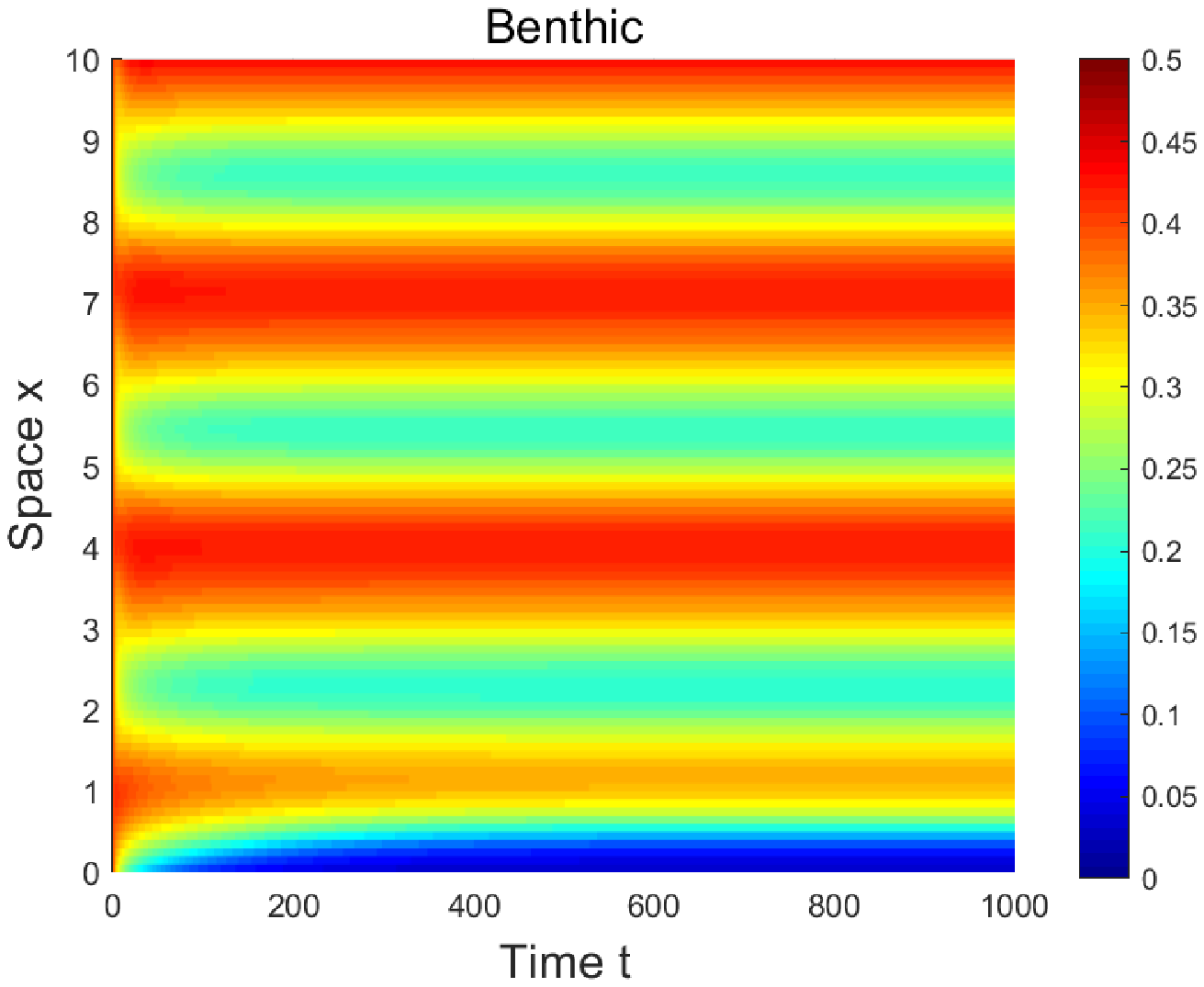}\\
\caption{Bistable dynamics for different initial conditions. Here the parameters satisfy \eqref{434} (except $A_b(x)$ and $A_d(x)$), $q=0.025$, $\mu=0.1$,  $A_d(x)=\sin2x+2$, $A_d(x)=\sin(2x-10)+2$. The initial conditions from first row to third row are $u_0(x)=0$, $v_0(x)=0$ for $x\in[0, L/2]$ and $v_0(x)=0.1$ for $x\in[L/2, L]$; $u_0(x)=0.08$, $v_0(x)=0$ for $x\in[0, L/2]$ and $v_0(x)=0.4$ for $x\in[L/2, L]$; $u_0(x)=0.1$, $v_0(x)=0.4$. Left: the drift population; Right: the benthic population.}
\label{3bi3}
\end{figure}



\bibliographystyle{plain}
\bibliography{advection}

\begin{thebibliography}{10}

\bibitem{amann1976}
H.~Amann.
\newblock Fixed point equations and nonlinear eigenvalue problems in ordered
  {B}anach spaces.
\newblock {\em SIAM Rev.}, 18(4):620--709, 1976.

\bibitem{bencala1983simulation}
K.~E. Bencala and R.~A. Walters.
\newblock Simulation of solute transport in a mountain pool-and-riffle stream:
  A transient storage model.
\newblock {\em Water Resources Research}, 19(3):718--724, 1983.

\bibitem{CC2003}
R.~S. Cantrell and C.~Cosner.
\newblock {\em Spatial ecology via reaction-diffusion equations}.
\newblock Wiley Series in Mathematical and Computational Biology. John Wiley \&
  Sons, Ltd., Chichester, 2003.

\bibitem{Cui2014JDE}
R.-H. Cui, J.-P. Shi, and B.-Y. Wu.
\newblock Strong {A}llee effect in a diffusive predator-prey system with a
  protection zone.
\newblock {\em J. Differential Equations}, 256(1):108--129, 2014.

\bibitem{deangelis1995modelling}
D.~L. DeAngelis, M.~Loreau, D.~Neergaard, P.~J. Mulholland, and E.~R. Marzolf.
\newblock Modelling nutrient-periphyton dynamics in streams: the importance of
  transient storage zones.
\newblock {\em Ecological Modelling}, 80(2-3):149--160, 1995.

\bibitem{du2019JDE}
K.~Du, R.~Peng, and N.-K. Sun.
\newblock The role of protection zone on species spreading governed by a
  reaction-diffusion model with strong {A}llee effect.
\newblock {\em J. Differential Equations}, 266(11):7327--7356, 2019.

\bibitem{Grover2009}
J.~P. Grover, S.-B. Hsu, and F.-B. Wang.
\newblock Competition and coexistence in flowing habitats with a hydraulic
  storage zone.
\newblock {\em Math. Biosci.}, 222(1):42--52, 2009.

\bibitem{henry1981}
D.~Henry.
\newblock {\em Geometric theory of semilinear parabolic equations}, volume 840
  of {\em Lecture Notes in Mathematics}.
\newblock Springer-Verlag, Berlin-New York, 1981.

\bibitem{Hsu2010}
S.-B. Hsu and Y.~Lou.
\newblock Single phytoplankton species growth with light and advection in a
  water column.
\newblock {\em SIAM J. Appl. Math.}, 70(8):2942--2974, 2010.

\bibitem{hsu2011}
S.-B. Hsu, F.-B. Wang, and X.-Q. Zhao.
\newblock Dynamics of a periodically pulsed bio-reactor model with a hydraulic
  storage zone.
\newblock {\em Journal of Dynamics and Differential Equations}, 23(4):817--842,
  2011.

\bibitem{hsu2013}
S.-B. Hsu, F.-B. Wang, and X.-Q. Zhao.
\newblock Global dynamics of zooplankton and harmful algae in flowing habitats.
\newblock {\em J. Differential Equations}, 255(3):265--297, 2013.

\bibitem{huang2016}
Q.-H. Huang, Y.~Jin, and M.~A. Lewis.
\newblock {$R_0$} analysis of a {B}enthic-drift model for a stream population.
\newblock {\em SIAM J. Appl. Dyn. Syst.}, 15(1):287--321, 2016.

\bibitem{huisman2002sinking}
J.~Huisman, M.~Array{\'a}s, U.~Ebert, and B.~Sommeijer.
\newblock How do sinking phytoplankton species manage to persist?
\newblock {\em Amer. Naturalist}, 159(3):245--254, 2002.

\bibitem{Jin2011}
Y.~Jin and M.~A. Lewis.
\newblock Seasonal influences on population spread and persistence in streams:
  critical domain size.
\newblock {\em SIAM J. Appl. Math.}, 71(4):1241--1262, 2011.

\bibitem{Jin2017}
Y.~Jin, F.~Lutscher, and Y.~Pei.
\newblock Meandering rivers: how important is lateral variability for species
  persistence?
\newblock {\em Bull. Math. Biol.}, 79(12):2954--2985, 2017.

\bibitem{Jin2019}
Y.~Jin, R.~Peng, and J.-P. Shi.
\newblock Population dynamics in river networks.
\newblock {\em J. Nonlinear Sci.}, pages 1--45, 2019 (to appear).

\bibitem{jin2018dynamics}
Y.~Jin and F.-B. Wang.
\newblock Dynamics of a benthic-drift model for two competitive species.
\newblock {\em J. Math. Anal. Appl.}, 462(1):840--860, 2018.

\bibitem{LLL2015JBD}
K.~Y. Lam, Y.~Lou, and F.~Lutscher.
\newblock Evolution of dispersal in closed advective environments.
\newblock {\em J. Biol. Dyn.}, 9(suppl. 1):188--212, 2015.

\bibitem{LLL2016SIAM}
K.~Y. Lam, Y.~Lou, and F.~Lutscher.
\newblock The {E}mergence of {R}ange {L}imits in {A}dvective {E}nvironments.
\newblock {\em SIAM J. Appl. Math.}, 76(2):641--662, 2016.

\bibitem{LL2014JMB}
Y.~Lou and F.~Lutscher.
\newblock Evolution of dispersal in open advective environments.
\newblock {\em J. Math. Biol.}, 69(6-7):1319--1342, 2014.

\bibitem{lou2019global}
Y.~Lou, X.-Q. Zhao, and P.~Zhou.
\newblock Global dynamics of a {L}otka-{V}olterra
  competition-diffusion-advection system in heterogeneous environments.
\newblock {\em J. Math. Pures Appl. (9)}, 121:47--82, 2019.

\bibitem{Louzhou2015}
Y.~Lou and P.~Zhou.
\newblock Evolution of dispersal in advective homogeneous environment: the
  effect of boundary conditions.
\newblock {\em J. Differential Equations}, 259(1):141--171, 2015.

\bibitem{Lutscher2006}
F.~Lutscher, M.~A. Lewis, and E.~McCauley.
\newblock Effects of heterogeneity on spread and persistence in rivers.
\newblock {\em Bull. Math. Biol.}, 68(8):2129--2160, 2006.

\bibitem{Lutscher2005}
F.~Lutscher, E.~Pachepsky, and M.~A. Lewis.
\newblock The effect of dispersal patterns on stream populations.
\newblock {\em SIAM J. Appl. Math.}, 65(4):1305--1327, 2005.

\bibitem{mzhao2005}
P.~Magal and X.-Q. Zhao.
\newblock Global attractors and steady states for uniformly persistent
  dynamical systems.
\newblock {\em SIAM J. Math. Anal.}, 37(1):251--275, 2005.

\bibitem{marciniak2017}
A.~Marciniak-Czochra, G.~Karch, and K.~Suzuki.
\newblock Instability of {T}uring patterns in reaction-diffusion-{ODE} systems.
\newblock {\em J. Math. Biol.}, 74(3):583--618, 2017.

\bibitem{martin1990}
R.~H. Martin, Jr. and H.~L. Smith.
\newblock Abstract functional-differential equations and reaction-diffusion
  systems.
\newblock {\em Trans. Amer. Math. Soc.}, 321(1):1--44, 1990.

\bibitem{Mckenzie2012}
H.~W. Mckenzie, Y.~Jin, J.~Jacobsen, and M.~A. Lewis.
\newblock {$R_0$} analysis of a spatiotemporal model for a stream population.
\newblock {\em SIAM J. Appl. Dyn. Syst.}, 11(2):567--596, 2012.

\bibitem{muller1954investigations}
K.~M\"uller.
\newblock Investigations on the organic drift in {N}orth {S}wedish streams.
\newblock {\em Report of the Institute of freshwater research, Drottningholm},
  35:133--148, 1954.

\bibitem{Pachepsky200561}
E.~Pachepsky, F.~Lutscher, R.M. Nisbet, and M.A. Lewis.
\newblock Persistence, spread and the drift paradox.
\newblock {\em Theor. Popul. Biol.}, 67(1):61--73, 2005.

\bibitem{cvpao}
C.~V. Pao.
\newblock {\em Nonlinear parabolic and elliptic equations}.
\newblock Plenum Press, New York, 1992.

\bibitem{pao1996JMAA}
C.~V. Pao.
\newblock Dynamics of nonlinear parabolic systems with time delays.
\newblock {\em J. Math. Anal. Appl.}, 198(3):751--779, 1996.

\bibitem{Ramirez2012}
J.~M. Ramirez.
\newblock Population persistence under advection-diffusion in river networks.
\newblock {\em J. Math. Biol.}, 65(5):919--942, 2012.

\bibitem{Sarhad2014}
J.~Sarhad, R.~Carlson, and K.~E. Anderson.
\newblock Population persistence in river networks.
\newblock {\em J. Math. Biol.}, 69(2):401--448, 2014.

\bibitem{sattinger}
D.~H. Sattinger.
\newblock Monotone methods in nonlinear elliptic and parabolic boundary value
  problems.
\newblock {\em Indiana Univ. Math. J.}, 21:979--1000, 1971/72.

\bibitem{SS2006JMB}
J.-P. Shi and R.~Shivaji.
\newblock Persistence in reaction diffusion models with weak {A}llee effect.
\newblock {\em J. Math. Biol.}, 52(6):807--829, 2006.

\bibitem{smith2008}
H.~L. Smith.
\newblock {\em Monotone dynamical systems: An introduction to the theory of
  competitive and cooperative systems}, volume~41 of {\em Mathematical Surveys
  and Monographs}.
\newblock American Mathematical Society, Providence, RI, 1995.

\bibitem{speirs2001}
D.~C. Speirs and W.~S.~C. Gurney.
\newblock Population persistence in rivers and estuaries.
\newblock {\em Ecology}, 82(5):1219--1237, 2001.

\bibitem{vasilyeva2019population}
O.~Vasilyeva.
\newblock Population dynamics in river networks: analysis of steady states.
\newblock {\em Jour. Math. Biol.}, pages 1--38, 2019 (to appear).

\bibitem{Vasilyeva2010}
O.~Vasilyeva and F.~Lutscher.
\newblock Population dynamics in rivers: analysis of steady states.
\newblock {\em Can. Appl. Math. Q.}, 18(4):439--469, 2010.

\bibitem{Wang2015}
F.-B. Wang, S.-B. Hsu, and X.-Q. Zhao.
\newblock A reaction-diffusion-advection model of harmful algae growth with
  toxin degradation.
\newblock {\em J. Differential Equations}, 259(7):3178--3201, 2015.

\bibitem{Wang2015b}
F.-B. Wang, J.-P. Shi, and X.-F. Zou.
\newblock Dynamics of a host-pathogen system on a bounded spatial domain.
\newblock {\em Commun. Pure Appl. Anal.}, 14(6):2535--2560, 2015.

\bibitem{zhao2012}
W.-D. Wang and X.-Q. Zhao.
\newblock Basic reproduction numbers for reaction-diffusion epidemic models.
\newblock {\em SIAM J. Appl. Dyn. Syst.}, 11(4):1652--1673, 2012.

\bibitem{ws2019}
Y.~Wang and J.-P. Shi.
\newblock Persistence and extinction of population in
  reaction-diffusion-advection model with weak {A}llee effect growth.
\newblock {\em SIAM J. Appl. Math.}, 79(4):1293--1313, 2019.

\bibitem{ws2018}
Y.~Wang, J.-P. Shi, and J.-F. Wang.
\newblock Persistence and extinction of population in
  reaction-diffusion-advection model with strong {A}llee effect growth.
\newblock {\em J. Math. Biol.}, 78(7):2093--2140, 2019.

\bibitem{Zhang2007}
K.~F. Zhang and X.-Q. Zhao.
\newblock Asymptotic behaviour of a reaction-diffusion model with a quiescent
  stage.
\newblock {\em Proc. R. Soc. Lond. Ser. A Math. Phys. Eng. Sci.},
  463(2080):1029--1043, 2007.

\bibitem{zz2016}
X.~Q. Zhao and P.~Zhou.
\newblock On a {L}otka-{V}olterra competition model: the effects of advection
  and spatial variation.
\newblock {\em Calc. Var. Partial Differential Equations}, 55(4):Art. 73, 25,
  2016.

\bibitem{z2016cvpde}
P.~Zhou.
\newblock On a {L}otka-{V}olterra competition system: diffusion vs advection.
\newblock {\em Calc. Var. Partial Differential Equations}, 55(6):Art. 137, 29,
  2016.

\bibitem{zhoup2018}
P.~Zhou and D.-M. Xiao.
\newblock Global dynamics of a classical {L}otka-{V}olterra
  competition-diffusion-advection system.
\newblock {\em J. Funct. Anal.}, 275(2):356--380, 2018.

\bibitem{zz2018}
P.~Zhou and X.~Q. Zhao.
\newblock Evolution of passive movement in advective environments: {G}eneral
  boundary condition.
\newblock {\em J. Differential Equations}, 264(6):4176--4198, 2018.

\end{thebibliography}

\end{document}